\let\originalleft\left
\let\originalright\right
\renewcommand{\left}{\mathopen{}\mathclose\bgroup\originalleft}
\renewcommand{\right}{\aftergroup\egroup\originalright}
\newcommand{\abs}[1]{\left\vert #1 \right\vert}
\newcommand{\specrad}[1]{\sigma ( #1 )}
\newcommand{\eps}{\epsilon}
\theoremstyle{plain}
\newtheorem{theorem}{Theorem}
\newtheorem{condition}{Condition}
\newtheorem{definition}{Definition}
\newtheorem{claim}{Claim}[section]
\newtheorem{lemma}[claim]{Lemma}
\title{Traffic Equations for Fluid Networks with Overflows}
\author{S.\ Fleuren
	\thanks{
		Swift Mobility B.V., Sioux Lime, and Department of Mechanical Engineering, Eindhoven University of Technology, PO Box 513, 5600MB, Eindhoven, The Netherlands.
	},
	H.\ M.\ Jansen
	\thanks{
         Department of Applied Mathematics, Delft University of Technology, Delft, the Netherlands. 
		 Corresponding author: \texttt{h.m.jansen@tudelft.nl}
	},
	E.\ Lefeber
	\thanks{
		Department of Mechanical Engineering, Eindhoven University of Technology, PO Box 513, 5600MB, Eindhoven, The Netherlands.
	},
	Y.\ Nazarathy
	\thanks{
		School of Mathematics and Physics, The University of Queensland, St Lucia, Queensland 4072, Australia
	}
}
\begin{document}
\maketitle

\begin{abstract}
\noindent Many real life queueing networks have finite buffers with overflows. To understand the behavior of such networks, we consider traffic equations that generalize the traffic equations of classic open queueing networks where some nodes are potentially overloaded. We present a novel, efficient algorithm for solving the equations for overflow networks together with a sufficient condition for existence and uniqueness of solutions. Our analysis also sharpens results of traffic equations for classic open queueing networks.\\ %For these networks we present a necessary and sufficient condition for uniqueness of solutions.\\
\\
\textbf{\footnotesize Keywords: } Fluid network $\bullet$ traffic equation $\bullet$ overflow $\bullet$ queueing system
\end{abstract}

\section{Introduction}
Analysis of queueing network models has taken a central role in the study of operations research dealing with resource-constrained dynamic systems. %In a queueing network model, {\em jobs} traverse through service stations at nodes, queue up,  %for service %at occupied stations
In a queueing network, {\em jobs} traverse through service stations at nodes, they queue up at occupied nodes, and they receive service before moving onwards. Many queueing network models are stochastic in nature, incorporating exogenous random arrival processes of jobs, random service durations and possibly random routing. The classical elementary model is the {\em open Jackson network}, in which jobs arrive to nodes according to Poisson processes, are served in each node by single-server nodes based on i.i.d.\ exponential processing time durations, and are routed probabilistically to other nodes, where the routing is not affected by the history of the network or any other aspect.

Jackson \cite{Jackson407} discovered a very elegant product-form solution for the stationary distribution of the number of jobs in each node. This is a result for the case where there is enough capacity in each server to serve the incoming jobs. Many subsequent product-form, stationary-distribution results followed for variants of this model, which was subsequently named the {\em Jackson network}. See for example \cite{bookChenYao2001} or Chapter~2 of \cite{bramsonBook2008} for an overview. Years after the discovery of Jackson's result, networks in which the service and routing patterns of jobs are indistinguishable (as in Jackson networks) have been termed {\em single-class}. This differentiates such networks from {\em multi-class} networks, in which individual nodes may serve more than one job class, each possibly with its own service requirements and routing pattern. Networks are termed {\em open} if jobs may arrive from the outside world or depart there. In this paper we deal with open single-class networks.

While the idea of finding product-form stationary distributions has had phenomenal success for single-class networks, there are many variations of stochastic queueing network models that are not explicitly solvable.  The standard example is the so called {\em generalized Jackson network}, in which arrival streams of jobs are general renewal processes and processing times follow arbitrary probability distributions.  In such cases, various approximation approaches have been proposed, both heuristic and having some theoretical basis. One such approximation is the so-called {\em fluid approximation}, analyzed exhaustively in \cite{chen1991dfn}.  In this case, job flows are replaced by idealized deterministic fluid flows, while discrete counts of the number of jobs in each buffer are replaced by continuous fluid tanks. This transforms the discrete stochastic queueing system into a continuous deterministic dynamical system with trajectories that are easily described as piece-wise affine, having break-points at time instances at which buffers change mode from being empty to not or vice versa.

In the case of a generalized Jackson network, the fluid model indicates which nodes are underloaded, which are overloaded and which are critical. This is the essence of the {\em bottleneck analysis} in \cite{chen1991dfn}. Such analysis is also used to find the flow rates between nodes and can be used as an aid for determining stability of the system, as in \cite{Dai108}. While such a macroscopic view of discrete-job systems is often considered too-coarse, it may also be employed as a modelling tool from the go. This is the case when studying stochastic fluid buffers, see for example \cite{zwart2000fluid} and references there-in, as well as when considering time-varying fluid systems, as in the classic work surveyed in \cite{newell1971aqt}.
%
%In considering queueing networks for such applications one may formulate several variants of stochastic general queueing network models. The specific network model would depend on the exact overflow mechanism as well as on the stochastic and scheduling assumptions of the network. While such model formulations are important, each model may depend on the exact application at hand.
%

A central component of bottleneck analysis involves {\em traffic equations}. For this consider the following three vector-valued equations
% for networks with overflows. Such equations allow to execute a bottleneck analysis for a natural variant of the single-class open queueing network: a {\em single-class queueing network with overflows}. We take such an approach by presenting a natural generalization to networks that do not have overflows.
%We take a first step towards understanding general overflow networks by studying their {\em traffic equations}.
%For this consider the following three forms of traffic equations:
%
\begin{align}
\label{eq:basicJackson}
\lambda &= \alpha + \lambda P, \\
\label{eq:goodmanMasseyEq}
\lambda &= \alpha + \min(\lambda,\mu) P, \\
\label{eq:ourTrafficEquation}
\lambda &= \alpha + \min(\lambda,\mu) P + \max(\lambda-\mu,0) Q.
\end{align}
Here $\alpha$ and $\mu$ are $n$-dimensional nonnegative row vectors, while $P$ and $Q$ are $n \times n$ sub-stochastic routing matrices. %nonnegative matrices, which at this point can be taken to have row sums strictly less than unity for simplicity.
The minimum and maximum operations are taken element-wise, and $0$ is a row vector of $0$'s. The {\em unknown} in these equations is the row vector $\lambda$, where $\lambda_i$ indicates the total arrival rate to node~$i$.

The first two systems of equations, \eqref{eq:basicJackson} and \eqref{eq:goodmanMasseyEq}, have been previously studied extensively. The system \eqref{eq:basicJackson} is well-known for Jackson networks (as well as generalized Jackson networks), in which $P$ is the routing matrix and $\alpha$ is the exogenous arrival rate of jobs. Usage of its solution assumes that all nodes are able to sustain their arrivals and hence arrivals at a rate of $\lambda_i$ imply departures at a rate of $\lambda_i$. Indeed this is the case if $\lambda_i<\mu_i$ for all nodes $i$, with $\mu_i$ denoting the service capacity of node $i$.  It is clear that the meaning of equation \eqref{eq:basicJackson} breaks down once nodes become overloaded.
%
%The fact that equation \eqref{eq:basicJackson} is only suitable for stable networks is remedied by
The more general system \eqref{eq:goodmanMasseyEq} was advanced by Goodman and Massey in \cite{goodman1984nej} and is also attributed to \cite{schweitzer1982bottleneck}. In their work, besides showing under a specific condition that \eqref{eq:goodmanMasseyEq} has an efficiently found unique nonnegative solution, Goodman and Massey also generalized Jackson's product-form result to allow for the case in which some nodes have $\lambda_i \geq \mu_i$. They found a product-form solution (in the case of memoryless assumptions) on the nodes in which $\lambda_i < \mu_i$, and showed that other nodes diverge to infinity with probability $1$. Also see \cite{sommer2017analysis}, where a generalization of \eqref{eq:goodmanMasseyEq} is introduced to study networks with unreliable nodes. Chen and Mandelbaum \cite{chen1991dfn} relate \eqref{eq:goodmanMasseyEq} to a linear complementarity problem (LCP). See \cite{cottlelinear} or \cite{murty1988linear} for LCP background.

Our main contribution is the introduction and analysis of the third system of equations \eqref{eq:ourTrafficEquation}. Our motivation for this natural extension is that \eqref{eq:ourTrafficEquation} allows us to incorporate {\em overflow phenomena} in models and to carry out a bottleneck analysis in the presence of finite buffers and overflows. In formulating these equations we assume nodes may have a finite buffer. Further, jobs (or material) arriving to nodes with {\em full} buffers overflow to other nodes with proportions $q_{ij}$. In this case, spelling out \eqref{eq:ourTrafficEquation} for an individual node $i$, we have
\begin{align*}
	\lambda_i = \alpha_i + \sum_{j=1}^{n}  \min(\lambda_j,\mu_j) p_{ji} + \sum_{j=1}^{n} \max(\lambda_j-\mu_j,0) q_{ji},
	\quad
	i=1,\dotsc,n.
\end{align*}
Here, $\lambda_{i}$ denotes the total arrival rate to node $i$ and is determined by the rate $\alpha_{i}$ of exogenous arrivals to node $i$ as well as the arrival rate to other nodes. More precisely, if node $j$ (which has capacity $\mu_{j}$) faces a total arrival rate of $\lambda_{j}$, then it processes the arrivals at rate $\lambda_{j} \wedge \mu_{j}$ and sends a fraction $p_{ji}$ to node $i$. If node $j$ faces a total arrival rate $\lambda_{j}$ that exceeds its capacity $\mu_{j}$, then overflow with rate $\lambda_{j} - \mu_{j}$ is created and a fraction $q_{ji}$ of this overflow is sent to node $i$. This gives rise to the overflow traffic equations \eqref{eq:ourTrafficEquation} for the total arrival rates.
Thus, the total arrival rate $\lambda_{i}$ to node $i$ equals the sum of the exogenous arrivals (first term on the right), the arrivals due to service (second term on the right) and the overflows resulting from finite buffer nodes with $\lambda_j>\mu_j$ (third term on the right). This behaviour is significantly different than the infinite-buffer case described by \eqref{eq:goodmanMasseyEq}.
%(or the loss-network case),
%in which we take $\sum_{j=1}^{n} q_{ij} = 0$ for all $i$.
Note that for infinite-buffer nodes, the corresponding row of $Q$ is taken as $0$ and the node fills up without bound.
%QQQQ since overflowing material from node $i$ remains in the network at rate
%\begin{align*}
%	(\lambda_i-\mu_i)\sum_{j=1}^{n} q_{ij},
%\end{align*}
%and thus affects the other nodes.
%
%Note that for simplicity, in the infinite-buffer case, the corresponding row of $Q$ is taken as $0$ and the node fills up without bound.
A schematic illustration of the type of network we consider is in Figure~\ref{fig1}.

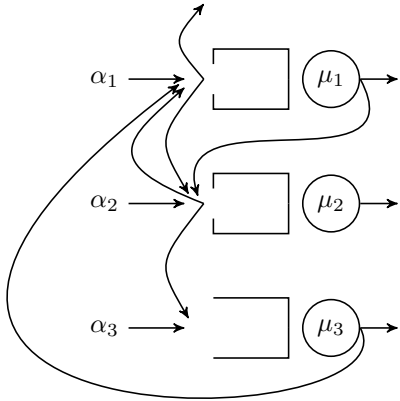
\begin{figure}[h]
%\begin{center}
\begin{tikzpicture}[>=stealth',auto,
                    semithick, every node/.style={transform shape}]
  \tikzstyle{every state}=[draw=black,text=black]
\node		    (1)  {};
\node		    (2) [below =1.4cm of  1]  {};
\node		    (3) [below =1.4cm of  2]  {};

\node (1s) [right=0.05cm of 1, shape = circle,draw]{$\mu_1$};
\node (2s) [right=0.05cm of 2, shape = circle,draw]{$\mu_2$ };
\node (3s) [right=0.05cm of 3, shape = circle,draw]{$\mu_3$};

\node (1buf) [left = 1cm of 1]{} ;
\node (2buf) [left = 1cm of 2]{} ;
\node (3buf) [left = 1cm of 3]{} ;

\node (1out)[right=0.5cm of 1s]{};
\node (2out)[right=0.5cm of 2s]{};
\node (3out)[right=0.5cm of 3s]{};

\node[left=2cm of 1] (1a) {$\alpha_1$};
\node[left=2cm of 2] (2a) {$\alpha_2$};
\node[left=2cm of 3] (3a) {$\alpha_3$};

\draw[->] (1a) -- (1buf);
\draw[->] (2a) -- (2buf);
\draw[->] (3a) -- (3buf);

\draw[->] (1s) -- (1out);
\draw[->] (2s) -- (2out);
\draw[->] (3s) -- (3out);

\draw (1.center) -- ($(1.center)+(0cm,0.4cm)$) -- ($(1.center)+(-1cm,0.4cm)$) -- ($(1.center)+(-1cm,0.2cm)$);
\draw (1.center) -- ($(1.center)+(0cm,-0.4cm)$) -- ($(1.center)+(-1cm,-0.4cm)$) -- ($(1.center)+(-1cm,-0.2cm)$);

	\draw (2.center) -- ($(2.center)+(0cm,0.4cm)$) -- ($(2.center)+(-1cm,0.4cm)$)-- ($(2.center)+(-1cm,0.2cm)$);
	\draw (2.center) -- ($(2.center)+(0cm,-0.4cm)$) -- ($(2.center)+(-1cm,-0.4cm)$)-- ($(2.center)+(-1cm,-0.2cm)$);

	\draw (3.center) -- ($(3.center)+(0cm,0.4cm)$) -- ($(3.center)+(-1cm,0.4cm)$);%-- ($(3.center)+(-1cm,0.2cm)$);
	\draw (3.center) -- ($(3.center)+(0cm,-0.4cm)$) -- ($(3.center)+(-1cm,-0.4cm)$);%-- ($(3.center)+(-1cm,-0.2cm)$);
	
\draw [->] (1s.east) .. controls ($(1s.east) + (0.8cm,-1.5cm)$) and ($(2buf)+(-0.4cm,1.5cm)$) .. ($(2buf)+(0.05cm,0.1cm)$);

\draw [->] (2buf.east) .. controls ($(2buf) + (-0.5cm,-0.8cm)$) .. (3buf);

\draw [->] (1buf.east) .. controls ($(1buf) + (-0.5cm,-0.8cm)$) .. (2buf);

\draw [->] (2buf.east) .. controls ($(1buf) + (-1.3cm,-1.1cm)$) and ($(1buf) + (-0.9cm,-0.8cm)$) .. (1buf);

\draw [->] (1buf.east) .. controls ($(1buf) + (-0.3cm,0.5cm)$) .. ($(1buf.east)+(0,1cm)$);

\draw[->] (3s.east) .. controls ($(3s.east) + (0.8cm,-1.5cm)$) and  (-8,-5)  .. ($(1buf)+(-0.2,-0.07cm)$);
\end{tikzpicture}
%\end{center}
\caption{\it An illustration of an overflow fluid network with nodes $i=1,2,3$. Fluid arrives exogenously at rates $\alpha_i$ and is served at rates $\mu_i$. Fluid is drained out of node $i$ and is routed to node $j$ with proportion $p_{ij}$.  Fluid reaching a node with a full buffer $i$ overflows to other nodes with proportion $q_{ij}$. The remaining proportions of routing ($p$) or overflow ($q$) leave the system. Nodes with infinite buffers, such as node $i=3$ in this example, have $\sum_{j} q_{ij} = 0$. \label{fig1}}
\end{figure}

Our motivation for considering overflows is that the infinite-buffer assumption of standard queueing network models is often unrealistic, since in practice the finiteness of buffers may significantly affect the dynamics of the network. Finite buffers may yield pure loss behaviour (e.g.\ \cite{Kelly0229}), blocking behaviour (e.g.\ \cite{balsamo2001aqn}, \cite{dai1999heavy}) or overflow behaviour as in Chapter~8 of \cite{bookWolff1989}. It is this last case that is our focus. Many special cases of overflow networks have previously appeared in theory and applications. Explicit analysis of special structure networks is in \cite{hordijk1987stochastic}, \cite{Kaufmanetal1981}, \cite{Sendfeld2008}, \cite{vanDijk1987} and \cite{vanDijk1989}. Asymptotic analysis is in \cite{GurvichPerry2012}. Analysis connected with dedicated applications is in \cite{asaduzzaman2010loss}, \cite{KooleTalim2000} and \cite{Litvak2008}. However, all of these mentioned papers deal with either two queues only or with specialized topologies. Indeed there has not been a complete investigation of overflow networks with arbitrary routing and overflow topologies.

Nevertheless, overflow phenomena are omnipresent in applications. In health care operations, patients arriving to overcrowded facilities often overflow to other care centers. In city bike rental schemes, customers returning bikes to full centers often choose nearby centers. In amusement parks, attendees traverse between attraction rides, often choosing to not enter rides that are (momentarily) with big queues. In convention centers, delegates often traverse between exhibition booths and when one booth fills up, they overflow to alternative nearby booths. Other examples of multi-node systems with overflows also appear in urban road traffic, telecommunications, manufacturing and other domains.

%Our key contribution in this paper is introducing and analyzing \eqref{eq:ourTrafficEquation}, a significantly more complex system of equations.

Our key contributions are the following:
\begin{itemize}
	\item[(i)] We extend results about equation \eqref{eq:goodmanMasseyEq}, fully characterizing the class of models for which there is a unique non-negative solution.
	\item[(ii)] We build on these results to derive a sufficient condition for a unique non-negative solution for equation \eqref{eq:ourTrafficEquation}.
	\item[(iii)] We present a novel, efficient algorithm for solving equation \eqref{eq:ourTrafficEquation} and prove that it finds the solution under our sufficient condition. This algorithm terminates in at most $n^2$ steps, and generalizes the $n$-step algorithm of \cite{goodman1984nej} for \eqref{eq:goodmanMasseyEq}. Each step involves solving a linear set of equations of order at most $n$. An implementation of our algorithm, also allowing to reproduce the examples from this paper, is available at \cite{GitHub:overflow-algorithm}.
\end{itemize}

The remainder of this paper is structured as follows. In Section~\ref{sec:algMainRes} we present the main algorithms of this paper (Algorithm~\ref{alg:GMalgorithm} and Algorithm~\ref{alg:overflowAlgorithm}) together with the main results (Theorem~\ref{th:GMalgorithmworks} and Theorem~\ref{alg:overflowAlgorithm}). In Section~\ref{sec:proofs} we prove the main results. In Section~\ref{sec:numExample} we present example networks. We then conclude in Section~\ref{sec:conc}. The appendix contains a number of technical lemmas for networks without overflows. We provide an overview of the main notational conventions below.

%%%%%%%%%%%%%%%%%%%%
%%%%%%%%%%%%%%%%%%%%
%%%%%%%%%%%%%%%%%%%%
\subsection*{Notation}

We use the following notation throughout.  Let $n \in \mathbb{N}$ and denote $N = \lbrace 1 , \dotsc , n \rbrace$. We assume that vectors are row vectors. A nonnegative vector is a vector whose entries are nonnegative. Inequalities of vectors are interpreted pointwise, so $x < y$ for two $n$-dimensional vectors $x$ and $y$ means that $x_{i} < y_{i}$ for all $i \in N$. If $x$ is a nonnegative $n$-dimensional vector, then $\abs{x} = \sum_{i \in N} x_{i}$. We also use the notation $\abs{{\cal A}}$ for the cardinality of a set ${\cal A}$. Suppose $P$ is an $n \times n$ matrix. The spectral radius of $P$ is $\specrad{P}$. We denote by $P^{k}$ the $k$-fold matrix product of $P$. If $P$ has entries $p_{ij}$, then $p_{ij}^{k}$ denote the corresponding entries of $P^{k}$. If $A, B \subset \lbrace 1 , \dotsc , d \rbrace$, then we define $P_{AB}$ as the matrix whose $(ij)$-th entry coincides with $p_{ij}$ if $i \in A$ and $j \in B$, and equals $0$ otherwise. In short, $P_{AB}$ is the matrix we obtain by zeroing out the rows $i \not\in A$ as well as the columns $j \not\in B$. We obtain the matrix $[P]_{AB}$ by erasing rows $i \not\in A$ and columns $j \not\in B$, so $[P]_{AB}$ has dimensions $\abs{A} \times \abs{B}$. We use the shorthand notation $P_{A} = P_{AN}$ and $[P]_{A} = [P]_{AN}$. If $P$ has nonnegative entries, then we call $P$ a nonnegative matrix. If $P$ is a nonnegative matrix, then we call $P$ stochastic if its row sums are all equal to $1$ and substochastic if its row sums are all smaller than or equal to $1$. The definition of communicating classes for substochastic matrices is the same as it is for stochastic matrices. That is, %If $P$ is a substochastic matrix,
we say that two elements $i, j \in N$ communicate if $i = j$ or if $p_{ij}^{k} > 0$ and $p_{ji}^{l} > 0$ for some $k, l \ge 0$. % \in \mathbb{N}$.
This equivalence relationship divides $N$ into classes, which we call the communicating classes based on $P$. The incidence matrix of a nonnegative matrix $P$ has entries $0$ and $1$, where an entry equals $0$ if the corresponding entry of $P$ equals $0$ and equals $1$ if the corresponding entry of $P$ is strictly positive. We denote the identity matrix by $I$; its dimensions are always clear from the context. The vector with each of its entries equal to $1$ is simply denoted by $1$. The dimensions of the vector $1$ are also clear from the context.

%%%%%%%%%%%%%%%%%%%%%%%%%%
%%%%%%%%%%%%%%%%%%%%%%%%%%
%%%%%%%%%%%%%%%%%%%%%%%%%%
%%%%%%%%%%%%%%%%%%%%%%%%%%
\section{Algorithms and main results}
\label{sec:algMainRes}

We consider a fluid network of $n$ nodes and denote the set of nodes by $N = \lbrace 1 , \dotsc , n \rbrace$. The network is parametrized by a vector $\alpha \geq 0$ of exogenous arrival rates, a vector $\mu > 0$ of service capacities, a substochastic routing matrix $P$ with entries $p_{ij}$, and a substochastic overflow matrix $Q$ with entries $q_{ij}$. We denote the network by $( \alpha, \mu, P, Q )$. If there is no overflow possible ($Q = 0$), then we denote the network by $( \alpha, \mu, P )$ and refer to it as a {\em network without overflows}. Otherwise, if $q_{ij} > 0$ for at least one entry of $Q$, then we refer to it as an {\em overflow network}.

%Our goal is to determine the vector of total arrival rates $\lambda$ for the network $( \alpha, \mu, P, Q )$ by solving the overflow traffic equation \eqref{eq:ourTrafficEquation}. To this end, we introduce an efficient algorithm that finds the solution to \eqref{eq:ourTrafficEquation} in at most $n^{2}$ iterations.

We first focus on the network $(\alpha, \mu, P)$ in which the overflow traffic equation \eqref{eq:ourTrafficEquation} reduces to \eqref{eq:goodmanMasseyEq}. We present the Goodman--Massey algorithm from \cite{goodman1984nej}, which solves \eqref{eq:goodmanMasseyEq} in $n$ iterations. The corresponding proof in \cite{goodman1984nej} is given under the so-called filled-or-drained ({\sc fd}) condition. We argue that this condition can be relaxed and call the relaxed condition the non-isolated (\textsc{ni}) condition. We show that the new \textsc{ni} condition encompasses all cases that admit a unique, nonnegative solution to \eqref{eq:goodmanMasseyEq} and we prove that the Goodman--Massey algorithm always finds the solution to \eqref{eq:goodmanMasseyEq} under this condition.

We then proceed to study the network $(\alpha, \mu, P, Q)$ and develop an algorithm that efficiently solves the corresponding overflow traffic equation \eqref{eq:ourTrafficEquation}. We prove that this new algorithm finds the solution of the overflow traffic equation \eqref{eq:ourTrafficEquation} under a condition that is derived from the \textsc{ni} condition and guarantees the existence and uniqueness of a nonnegative solution to \eqref{eq:ourTrafficEquation}.

%%%%%%%%%%%%%%%%%%%%%%%%%%%%%%%%
%%%%%%%%%%%%%%%%%%%%%%%%%%%%%%%%
%%%%%%%%%%%%%%%%%%%%%%%%%%%%%%%%
%%%%%%%%%%%%%%%%%%%%%%%%%%%%%%%%
\subsection*{Networks without overflows}
\label{subsec:solvingGoodmanMassey}

An important contribution of Goodman and Massey in \cite{goodman1984nej} is the introduction of an efficient algorithm (Algorithm~\ref{alg:GMalgorithm}) that solves the traffic equation \eqref{eq:goodmanMasseyEq}, provided that certain technical conditions are met. Every iteration requires solving a linear equation of the form
\begin{equation}
\label{eq:linearGMtrafficequation}
\lambda = \alpha + \mu P_{N \setminus S} + \lambda P_{S},
\quad
\mbox{for}
\quad
S \subset N.
\end{equation}

The idea behind the algorithm is quite intuitive. Consider the traffic equation~\eqref{eq:goodmanMasseyEq}. If we know beforehand which nodes are in the set of stable nodes $S$ (where $i \in S$ means that $\lambda_{i} < \mu_{i}$) and which nodes are in the set of nodes $N \setminus S$ that are not stable, then solving the traffic equation \eqref{eq:goodmanMasseyEq} reduces to solving the linear equation \eqref{eq:linearGMtrafficequation}. However, we do not know beforehand which nodes are stable. The algorithm deals with this iteratively by finding the stable nodes.

\begin{algorithm}[H]
	\caption{Goodman--Massey algorithm for solving the traffic equation \eqref{eq:goodmanMasseyEq}}
	\label{alg:GMalgorithm}
	\begin{algorithmic}
		\State $\kappa := 0$
		\State $S^{( \kappa )} := \emptyset$
		\Repeat
		\State $\kappa := \kappa + 1$
		\State Solve \eqref{eq:linearGMtrafficequation} based on $S := S^{( \kappa - 1)}$ to obtain a solution denoted by $\lambda^{( \kappa )}$
		\State $S^{( \kappa )} := \{ i \in N \, : \,  \lambda^{( \kappa )}_i < \mu_i \}$
		\Until $S^{( \kappa )} = S^{( \kappa - 1)}$
		\State $\lambda^{*} := \lambda^{( \kappa )}$
		\State \textbf{return} $\lambda^{*}$
	\end{algorithmic}
\end{algorithm}
The following intuition underlies the algorithm. Knowing that the maximum output rates of the nodes are given by the vector $\mu$, we start by considering the worst-case scenario in which the output rates of the nodes are given by the maximum output rates $\mu$, so essentially we assume that $S = \emptyset$. The corresponding solution of \eqref{eq:linearGMtrafficequation} is simply $\lambda^{(0)} = \alpha + \mu P$. Now there are two crucial observations. First, the vector $\lambda^{(0)}$ overestimates or equals the actual solution, since all nodes have their respective maximum output rate. Second, if a node $i$ is stable according to $\lambda^{(0)}$ (i.e., $\lambda^{(0)}_{i} < \mu_{i}$), then this node must be stable for all other scenarios with smaller output rates for the nodes. In particular, $i$ must be stable for the actual solution of the traffic equation \eqref{eq:goodmanMasseyEq}.
The estimate $\lambda^{(0)}$ gives us a set of stable nodes $S^{(0)}$. Knowing that these nodes must be stable for the actual solution of \eqref{eq:goodmanMasseyEq}, we compute the solution $\lambda^{(1)}$ of \eqref{eq:linearGMtrafficequation} based on $S = S^{(0)}$. Then $\lambda^{(1)}$ overestimates the actual solution of the traffic equation (because all nodes that might be unstable are given their respective maximum output rate), while $\lambda^{(1)}$ itself is smaller than $\lambda^{(0)}$. As before, this gives us a set of stable nodes $S^{(1)} \supset S^{(0)}$.
This procedure is repeated until $S^{(k+1)} = S^{(k)}$. In this case, the solution $\lambda^{(k+1)}$ of the linear equation \eqref{eq:linearGMtrafficequation} based on $S = S^{(k)}$ is also a solution of the traffic equation \eqref{eq:goodmanMasseyEq}. Consequently, the algorithm leads to a solution of the traffic equation \eqref{eq:goodmanMasseyEq} in at most $n$ iterations, where each iteration requires the solution of the linear equation \eqref{eq:linearGMtrafficequation}.

In \cite{goodman1984nej} the authors formulate a certain filled-or-drained ({\sc fd}) condition under which Algorithm~\ref{alg:GMalgorithm} finds the unique, nonnegative solution of traffic equation \eqref{eq:goodmanMasseyEq}. We introduce the more general non-isolated (\textsc{ni}) condition and prove that Algorithm~\ref{alg:GMalgorithm} also finds the solution to \eqref{eq:goodmanMasseyEq} under the \textsc{ni} condition. Importantly, we also show that the \textsc{ni} condition is equivalent to the traffic equation \eqref{eq:goodmanMasseyEq} having a unique, nonnegative solution.

To formulate the \textsc{ni} condition, we first divide the set of nodes into communicating classes according to the matrix $P$. We assume that there are exactly $m \in N$ different communicating classes corresponding to $P$ and we let $M = \lbrace 1 , \dotsc , m \rbrace$. Additionally, we denote the $m$ different classes by $C_{1} , \dotsc , C_{m}$. We characterize the classes as follows:
\begin{itemize}
	\item[(i)] A class $C$ can be \emph{filled} if $\alpha_{i} > 0$ for some $i \in C$, or if $\alpha_{i} > 0$ for some $i \in N$ and $p_{ij}^{k} > 0$ for some $j \in C$ and $k \in \mathbb{N}$.
	\item[(ii)] A class $C$ can be \emph{drained} if it can be \emph{externally drained} or \emph{internally drained}. We say that $C$ can be {externally drained} if there exists $i \in C$ such that $\sum_{j = 1}^{n} p_{ij} < 1$, so that fluid can escape from the network via node $i$. We say that $C$ can be {internally drained} if there exists $j \in N \setminus C$ such that $p_{ij} > 0$, so that fluid can escape to another class via node $i$.
	\item[(iii)] A class $C$ is called \emph{isolated} if it cannot be filled nor drained.
\end{itemize}
The {\sc fd} condition of \cite{goodman1984nej} requires that all nodes can be filled or externally drained. Our {\sc ni} condition is more general:
\begin{definition}
	\label{def:filleddrained}
	We say that a node $i$ is \textsc{ni} if it is not a node in an isolated class. Similarly, we say that the network $(\alpha, \mu, P)$ is \textsc{ni} if it does not contain an isolated class (all classes can be filled or drained).
\end{definition}
Our definition of a network being \textsc{ni} is more general than the \textsc{fd} condition, because in the \textsc{fd} condition drainable always means externally drainable. Although the \textsc{fd} condition is very elegant, we believe that our definition is not only more general but also more natural in light of the following results.

Because the output from the nodes is bounded by $\mu$, it can be shown using Tarski's fixed point theorem that the traffic equation \eqref{eq:goodmanMasseyEq} always has a solution, regardless of any condition, see \cite{chen1991dfn}. However, without any conditions imposed, the solution may be negative and may not make physical sense. It turns out that \textsc{ni} is equivalent to every solution of the traffic equation being nonnegative. Additionally, \textsc{ni} implies the uniqueness of a solution to \eqref{eq:goodmanMasseyEq}. This means that \textsc{ni} is actually equivalent to the existence of a unique, nonnegative solution to \eqref{eq:goodmanMasseyEq}. In view of this,
%our \textsc{ni} condition is the natural condition to impose on the network, both from a practical and from a mathematical perspective. Therefore,
we use the following condition in the remainder of this paper.
\begin{condition}%[\textsc{ni} condition]
	\label{as:filledordrained}
	The network $(\alpha, \mu, P)$ is \textsc{ni}.
\end{condition}

Under this condition, we show that Algorithm~\ref{alg:GMalgorithm} efficiently solves the traffic equation \eqref{eq:goodmanMasseyEq}. The proof is in the next section.

\begin{theorem}
	\label{th:GMalgorithmworks}
	The network $(\alpha, \mu, P)$ satisfies Condition \ref{as:filledordrained} if and only if the traffic equation \eqref{eq:goodmanMasseyEq} has a unique, nonnegative solution. Additionally, the Goodman--Massey algorithm presented in Algorithm~\ref{alg:GMalgorithm} finds this solution in at most $n$ iterations under Condition~\ref{as:filledordrained}. Each iteration consists of solving a linear equation in $n$ variables that has a unique solution.
\end{theorem}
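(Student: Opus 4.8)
The plan is to prove Theorem~\ref{th:GMalgorithmworks} in three stages: first establish the equivalence between Condition~\ref{as:filledordrained} and the existence of a unique nonnegative solution, then analyze the linear equation \eqref{eq:linearGMtrafficequation} used inside the algorithm, and finally show that Algorithm~\ref{alg:GMalgorithm} terminates in at most $n$ iterations at that solution. For the equivalence, I would start from the observation (attributed to \cite{chen1991dfn} via Tarski's fixed point theorem) that \eqref{eq:goodmanMasseyEq} always has at least one solution because the map $\lambda \mapsto \alpha + \min(\lambda,\mu)P$ is monotone and bounded above by $\alpha + \mu P$. The substantive direction is to show \textsc{ni} forces nonnegativity and uniqueness, while an isolated class produces a negative or non-unique solution. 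For a class $C$ that cannot be filled nor drained, $P_{CC}$ restricted to $C$ is stochastic (no escape) and the exogenous input into $C$ is zero, so on $C$ the equation decouples to $\lambda_C = \mu_C \cdot (\text{something}) + \lambda_C [P]_{CC}$ with $[P]_{CC}$ stochastic and irreducible; here one can perturb the solution along the Perron eigenvector (left eigenvector of $[P]_{CC}$ for eigenvalue $1$) to get infinitely many solutions, and by choosing the perturbation one can also force a negative entry — this kills both uniqueness and nonnegativity. Conversely, when every class is non-isolated, I would argue by induction on the communicating-class structure (processing classes in a topological order of the class DAG) that the solution is unique and nonnegative: a non-isolated class either receives positive external input or leaks fluid out, and in either case the relevant restricted substochastic matrix $[P_S]_{CC}$ has spectral radius strictly less than $1$ once we account for leakage, making the local linear system uniquely solvable with a nonnegative solution built from nonnegative data.

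For the linear equation \eqref{eq:linearGMtrafficequation}, the key claim is that for the specific sets $S^{(\kappa)}$ the algorithm generates, $\lambda = \alpha + \mu P_{N \setminus S} + \lambda P_S$ has a unique solution, i.e.\ $I - P_S$ is invertible on the relevant coordinates. This should follow from the \textsc{ni} condition: because $S$ is the set of currently-stable nodes and the data restricted to $S$ inherits non-isolation (the complement $N\setminus S$ acts like additional external drainage for the classes intersecting $S$), one gets $\sigma(P_S) < 1$ or at least invertibility of $I - P_S$, so $\lambda = (\alpha + \mu P_{N\setminus S})(I - P_S)^{-1}$ is well-defined and nonnegative. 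I'd phrase this as a lemma (likely one of the appendix lemmas the excerpt promises) and cite it.

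The termination and correctness of the algorithm is the part I'd spell out most carefully, following the intuition already sketched in the text. I would prove two monotonicity facts by induction on $\kappa$: (a) $\lambda^{(\kappa)} \ge \lambda^{(\kappa+1)}$ pointwise, and (b) consequently $S^{(\kappa)} \subseteq S^{(\kappa+1)}$. Fact (a) uses that replacing $\mu_i$ by $\lambda^{(\kappa)}_i < \mu_i$ for the newly-stabilized nodes only decreases the right-hand side, combined with monotonicity of the solution map of \eqref{eq:linearGMtrafficequation} in its data (again leaning on $(I-P_S)^{-1} \ge 0$). Fact (b) is immediate from (a) since $\lambda^{(\kappa+1)}_i \le \lambda^{(\kappa)}_i < \mu_i$ for $i \in S^{(\kappa)}$. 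Since $S^{(\kappa)}$ is a nondecreasing sequence of subsets of an $n$-element set starting from $\emptyset$, it stabilizes after at most $n$ strict increases, giving the $n$-iteration bound. When $S^{(\kappa)} = S^{(\kappa-1)} =: S$, the returned $\lambda^* = \lambda^{(\kappa)}$ satisfies $\lambda^*_i < \mu_i$ for $i \in S$ and $\lambda^*_i \ge \mu_i$ for $i \notin S$, so $\min(\lambda^*,\mu)$ agrees with $\mu$ on $N\setminus S$ and with $\lambda^*$ on $S$, which means \eqref{eq:linearGMtrafficequation} for this $S$ is exactly \eqref{eq:goodmanMasseyEq} evaluated at $\lambda^*$; hence $\lambda^*$ solves the traffic equation, and by the uniqueness established in stage one it is the solution. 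The main obstacle I anticipate is stage one — in particular, cleanly handling the decomposition into communicating classes and proving that non-isolation is exactly the right condition for $I - P_S$-type invertibility on each class, since this requires carefully tracking how drainage (external or into other classes) lowers the spectral radius and how the class DAG ordering lets the induction go through; the algorithmic part is then largely bookkeeping on top of the monotonicity lemma.
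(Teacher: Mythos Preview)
Your monotonicity argument for the algorithm (stage 3) is correct and matches the paper. The real gap is in stage 2, and the same missing ingredient also undermines your stage-1 uniqueness argument.

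You claim that $\sigma(P_{S})<1$ because ``the data restricted to $S$ inherits non-isolation'' and ``the complement $N\setminus S$ acts like additional external drainage.'' This fails precisely for a class $C$ that can be filled but \emph{not} drained. Such a $C$ has $[P]_{CC}$ stochastic and irreducible, so $\sigma([P]_{CC})=1$; if it happened that $C\subset S$, then the rows of $P_{S}$ indexed by $C$ are supported in $C$, $[P]_{CC}$ sits as a diagonal block of $P_{S}$, and $I-P_{S}$ is singular. Nothing in ``inherited non-isolation'' rules this out: the class is still non-isolated (it can be filled), and there is no complement node of $C$ in $N\setminus S$ to provide drainage. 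The paper closes this gap with a separate lemma (Lemma~\ref{lem:positivekappa}): if $C$ is fillable but not drainable and $\gamma\ge \alpha+(\gamma\wedge\mu)P$ is nonnegative, then some $i\in C$ has $\gamma_{i}>\mu_{i}$. Applied to $\gamma=\lambda^{(\kappa)}$ this forces $C\not\subset S^{(\kappa)}$, so $D_{k}=C\cap S^{(\kappa)}$ is a \emph{proper} subset of the irreducible stochastic block and $\sigma([P]_{D_{k}D_{k}})<1$. Only then does the block argument give $\sigma(P_{S})<1$.

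Your stage-1 uniqueness sketch has a related problem. Saying ``the local linear system is uniquely solvable'' presupposes you know the stable set, but two hypothetical solutions could have different stable sets, and the equation is genuinely nonlinear through the $\min$. The paper does not argue by class-DAG induction here; it takes two solutions $x,y$, sets $F=\{i:x_{i}>y_{i}\}$, and shows the inequalities force $[P]_{FF}$ to be stochastic, hence $F$ contains a non-drainable class $C$. If $C$ cannot be filled one gets $x=y=0$ on $C$ (contradiction), and if $C$ can be filled the same overload lemma gives $x_{i}>\mu_{i}$ for some $i\in C$, contradicting the inequality chain that forces $x_{i}\le\mu_{i}$ on $F$. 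So the key idea you are missing in both places is this ``every fillable, non-drainable class always contains an overloaded node'' lemma; once you have it, the rest of your outline goes through essentially as in the paper. (Minor point: for an isolated class the decoupled equation is $[\lambda]_{C}=([\lambda]_{C}\wedge[\mu]_{C})[P]_{CC}$ with no $\mu_{C}$-term; your Perron-perturbation idea then works, and the paper simply takes $[\lambda]_{C}=-\pi^{(C)}$.)
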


We now move on to the main object of this paper.

%%%%%%%%%%%%%%%%%%%%%%%%%%%%%%%%
%%%%%%%%%%%%%%%%%%%%%%%%%%%%%%%%
%%%%%%%%%%%%%%%%%%%%%%%%%%%%%%%%
%%%%%%%%%%%%%%%%%%%%%%%%%%%%%%%%
%%%%%%%%%%%%%%%%%%%%%%%%%%%%%%%%
%%%%%%%%%%%%%%%%%%%%%%%%%%%%%%%%
\subsection*{Networks with overflows}
\label{subsec:solvingOverflow}

We consider the network $(\alpha, \mu, P, Q)$ with the aim of solving the corresponding overflow traffic equation~\eqref{eq:ourTrafficEquation}. As in the case without overflows, we give conditions under which a unique, nonnegative solution exists and we develop an algorithm, the {\em overflow algorithm}, that efficiently finds this solution in at most $n^{2}$ iterations.

The overflow algorithm, presented here as Algorithm \ref{alg:overflowAlgorithm}, consists of an inner loop and an outer loop. The inner loop is a variation of Algorithm~\ref{alg:GMalgorithm}. At its heart is the equation
\begin{align}
	\label{eq:innerLoopEq}
	\lambda = \alpha + (\lambda \wedge \mu) P_{A} + \mu P_{N \setminus A} + (\lambda - \mu) Q_{B},
	\quad
	\mbox{for}
	\quad
	A,B \subset N,
\end{align}
which we call the {\em partly linearized overflow equation} based on $A$ and $B$, since the $(\lambda - \mu)^{+}$ term in the overflow equation \eqref{eq:ourTrafficEquation} is replaced by $(\lambda - \mu)$. The inner loop solves the partly linearized overflow traffic equation for certain sets of nodes $A$ and $B$ by iteratively solving linear equations of the form
\begin{align}
	\label{eq:innerLoopLinear}
	\lambda = \alpha +   \mu P_{N \setminus S} + \lambda P_{S} +  (\lambda - \mu) Q_{B},
		\quad
\mbox{for}
\quad
S \subset A \subset N,\qquad B \subset N.
\end{align}
Thus, the inner loop is essentially Algorithm \ref{alg:GMalgorithm} based on the linear equation \eqref{eq:innerLoopLinear} rather than \eqref{eq:linearGMtrafficequation}. Each iteration of the outer loop selects the sets of nodes $A$ and $B$ that are used in the inner loop. Based on the inner loop, the outer loop then iteratively adds overflow streams to the equation. We present the algorithm first and then discuss the intuition and conditions behind it.

\begin{algorithm}[H]
	\caption{Algorithm for solving the overflow traffic equation \eqref{eq:ourTrafficEquation} -- {\em overflow algorithm}}
	\label{alg:overflowAlgorithm}
	\begin{algorithmic}
		\State $\kappa := 0$
		\State $U^{(\kappa)} := \emptyset$
		\Repeat
		\State $\kappa := \kappa + 1$
		\State $\ell := 0$
		\State $S^{(\ell)} := \emptyset$
		\Repeat
		\State $\ell := \ell + 1$
		\State Solve \eqref{eq:innerLoopLinear} based on $S := S^{(\ell - 1)}$ and $B := U^{(\kappa - 1)}$ to obtain a solution denoted by $\lambda^{(\kappa, \ell)}$
		\State $S^{(\ell)} := \lbrace i \in N \setminus U^{(\kappa - 1)} \, : \, \lambda^{(\kappa, \ell)}_{i} < \mu_{i} \rbrace$
		\Until $S^{(\ell)} = S^{(\ell - 1)}$
		\State $\lambda^{(\kappa)} := \lambda^{(\kappa, \ell)}$
		\State $U^{(\kappa)} := \lbrace i \in N \, : \, \lambda^{(\kappa)}_{i} \ge \mu_{i} \rbrace$
		\Until $U^{(\kappa)} = U^{(\kappa - 1)}$
		\State $\lambda^{\dagger} := \lambda^{(\kappa)}$
		\State \textbf{return} $\lambda^{\dagger}$
	\end{algorithmic}
\end{algorithm}
The overflow algorithm is based on the following idea.
We start by considering the network without any overflows. Then the overflow traffic equation \eqref{eq:ourTrafficEquation} reduces to \eqref{eq:innerLoopEq} with $A = N$ and $B = \emptyset$, and the Goodman--Massey algorithm (implicitly executed via the inner loop) finds the corresponding solution $\lambda^{(1)}$. Because overflows are not allowed, the solution $\lambda^{(1)}$ underestimates the actual solution to \eqref{eq:ourTrafficEquation} and thus an unstable node $i$ for $\lambda^{(1)}$ (meaning that $\lambda^{(1)}_{i} \geq \mu_{i}$) must be unstable for the solution to \eqref{eq:ourTrafficEquation}.

The first estimate $\lambda^{(1)}$ gives us a set of unstable nodes $U^{(1)}$. Knowing that these nodes must be unstable for the solution to \eqref{eq:ourTrafficEquation}, we allow the nodes in $U^{(1)}$ to overflow. Then the inner loop solves \eqref{eq:innerLoopEq} with $A = N \setminus U^{(1)}$ and $B = U^{(1)}$ and returns a solution $\lambda^{(2)}$. The crucial observation here is that $\lambda^{(2)}$ is lower bounded by $\lambda^{(1)}$. Indeed, we know that the nodes in $U^{(1)}$ are unstable for $\lambda^{(1)}$, which is the solution of the traffic equation if no overflows are allowed. Since we do allow the nodes $U^{(1)}$ to overflow when solving for $\lambda^{(2)}$, we know that $\lambda^{(2)}$ must be lower bounded by $\lambda^{(1)}$. In particular, the nodes in $U^{(1)}$ are unstable for $\lambda^{(2)}$ and $(\lambda^{(2)} - \mu) Q_{B} = (\lambda^{(2)} - \mu)^{+} Q_{B}$ for $B = U^{(1)}$. This also gives us a set of unstable nodes $U^{(2)} \supset U^{(1)}$ that are unstable for $\lambda^{(2)}$.

This procedure is repeated until $U^{(k + 1)} = U^{(k)}$. In this case, the solution $\lambda^{(k + 1)}$ of the partly linearized traffic equation \eqref{eq:innerLoopEq} based on $A = N \setminus U^{(k)}$ and $B = U^{(k)}$ satisfies $(\lambda^{(k + 1)} - \mu) Q_{B} = (\lambda^{(k + 1)} - \mu)^{+} Q_{B}$, because $U^{(k + 1)} = U^{(k)}$. Then $\lambda^{(k + 1)}$ also solves the overflow traffic equation \eqref{eq:ourTrafficEquation}. Consequently, the algorithm leads to a solution of \eqref{eq:ourTrafficEquation}. It stops after at most $n^{2}$ iterations, because the inner loop and the outer loop each stop after at most $n$ iterations.

The intuitive explanation of the algorithm above tacitly assumes that the overflow traffic equation \eqref{eq:ourTrafficEquation} has a (nonnegative) solution and that we can solve the related linear equations. The network $(\alpha, \mu, P, Q)$ has to satisfy certain conditions for this to be true. We formulate such conditions here and provide a short discussion. %and discuss them after Theorem \ref{th:ourAlgorithmWorks}.

\begin{condition}
	\label{as:overflow}
	The network $(\alpha, \mu, P)$ satisfies Condition \ref{as:filledordrained}. Additionally, it holds that
	\begin{align}
		\label{eq:overflowCondition}
		\specrad{P_{A} + Q_{N \setminus A}} < 1,
	\end{align}
	for every set $A \subset N \setminus U^{*}$, where $U^{*} = \lbrace i \in N \, \vert \, \lambda^{*}_{i} \geq \mu_{i} \rbrace$ with $\lambda^{*}$ being the solution to \eqref{eq:goodmanMasseyEq}.
\end{condition}

%At first sight, it may appear that Condition \ref{as:overflow} puts a relatively stringent condition on $P$ and $Q$ to guarantee existence and uniqueness of a solution to the corresponding overflow equation \eqref{eq:ourTrafficEquation}. However, we show that this condition is in fact more general than the \textsc{ni} condition: it is equivalent to the \textsc{ni} condition if $Q = 0$ and thus no overflows are possible.

The condition consists of two requirements. The first requirement is needed to guarantee that every solution to the overflow traffic equation \eqref{eq:ourTrafficEquation} is nonnegative. The need for the second requirement can be inferred from \eqref{eq:ourTrafficEquation} itself. Any nonnegative solution $\lambda$ to \eqref{eq:ourTrafficEquation} has a (possibly empty) set $A$ of stable nodes (so $A \subset N \setminus U^{*}$) and satisfies
\begin{align*}
\lambda = \alpha + \mu P_{N \setminus A} - \mu Q_{N \setminus A} + \lambda (P_{A} + Q_{N \setminus A}).
\end{align*}
Consequently, if we are to find $\lambda$, we have to solve this linear equation. The second requirement of Condition~\ref{as:overflow} ensures that this is possible and that the solution is unique. Note that since Condition~\ref{as:overflow} requires \eqref{eq:overflowCondition} to hold for at least $A = \emptyset$, the condition imposes that $\sigma(Q) < 1$.

Note that Condition \ref{as:overflow} holds under several natural, sufficient conditions. If $P 1 < 1$ and $Q 1 < 1$, then it holds automatically, because every eigenvalue must be strictly smaller than $1$. This corresponds to the scenario in which every node is leaking at least a little bit. In the important case that $Q = 0$, we find ourselves in the Goodman--Massey scenario in which no overflows are allowed. In this case Condition~\ref{as:filledordrained} and Condition~\ref{as:overflow} are equivalent: either both are true or both are false. This is a straightforward consequence of the proof of Theorem \ref{th:GMalgorithmworks}.

With Condition \ref{as:overflow} formulated we show that the overflow algorithm efficiently solves the overflow traffic equation \eqref{eq:ourTrafficEquation}.

\begin{theorem}
	\label{th:ourAlgorithmWorks}
	If the network $(\alpha, \mu, P, Q)$ satisfies Condition~\ref{as:overflow}, then the overflow traffic equation \eqref{eq:ourTrafficEquation} has a unique, nonnegative solution. Additionally Algorithm~\ref{alg:overflowAlgorithm} finds this solution in at most $n^{2}$ iterations. Each iteration consists of solving a linear equation in $n$ variables that has a unique solution.
\end{theorem}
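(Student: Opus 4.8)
The plan is to treat \eqref{eq:ourTrafficEquation} as a fixed point equation for the monotone map $F(\lambda) = \alpha + (\lambda \wedge \mu)P + (\lambda - \mu)^{+}Q$, together with, for $B \subseteq N$, the restricted map $F_{B}$ obtained by replacing $Q$ with $Q_{B}$; thus $F_{\emptyset}$ is the Goodman--Massey map of \eqref{eq:goodmanMasseyEq} and $F_{N} = F$. The first point is that every nonnegative solution $\lambda$ of \eqref{eq:ourTrafficEquation} satisfies $\lambda \geq \lambda^{*}$: since the overflow term is nonnegative, $\lambda = F(\lambda) \geq F_{\emptyset}(\lambda)$, so $\lambda$ is a supersolution of the monotone map $F_{\emptyset}$; iterating $F_{\emptyset}$ downward from $\lambda$ gives a decreasing sequence bounded below by $\alpha \geq 0$, whose limit is a nonnegative fixed point of $F_{\emptyset}$ and hence equals $\lambda^{*}$ by Theorem~\ref{th:GMalgorithmworks} (which applies because Condition~\ref{as:overflow} includes Condition~\ref{as:filledordrained}). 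Consequently the overloaded set of every solution contains $U^{*}$, i.e.\ its stable set lies in $N \setminus U^{*}$, which is precisely where the bound \eqref{eq:overflowCondition} is available; and the linear systems of Algorithm~\ref{alg:overflowAlgorithm} are solvable, since \eqref{eq:innerLoopLinear} rearranges to $\lambda(I - P_{S} - Q_{B}) = \alpha + \mu(P_{N \setminus S} - Q_{B})$ and, for the sets actually used ($B = U^{(\kappa-1)} \supseteq U^{*}$, $S \subseteq N \setminus B \subseteq N \setminus U^{*}$, with $B = \emptyset$ on the first outer pass), $\sigma(P_{S} + Q_{B}) \leq \sigma(P_{S} + Q_{N \setminus S}) < 1$ by Condition~\ref{as:overflow}, so the Neumann series yields the unique solution.

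Correctness of Algorithm~\ref{alg:overflowAlgorithm} I would prove by induction on the outer counter $\kappa$, maintaining: $U^{(0)} \subseteq U^{(1)} \subseteq \cdots$ with $U^{(1)} = U^{*}$; $\lambda^{(1)} \leq \lambda^{(2)} \leq \cdots$; $\lambda^{(\kappa)}_{i} \geq \mu_{i}$ for $i \in U^{(\kappa)}$; and $\lambda^{(\kappa)}$ is both the solution of the partly linearized equation \eqref{eq:innerLoopEq} with $A = N \setminus U^{(\kappa-1)}$, $B = U^{(\kappa-1)}$ and the (unique) nonnegative fixed point of $F_{U^{(\kappa-1)}}$, the two coinciding once $\lambda \geq \mu$ on $U^{(\kappa-1)}$. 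The base case is immediate: with $B = U^{(0)} = \emptyset$ the inner loop is literally Algorithm~\ref{alg:GMalgorithm}, so Theorem~\ref{th:GMalgorithmworks} gives $\lambda^{(1)} = \lambda^{*}$ and $U^{(1)} = U^{*}$. For the inductive step, with $B = U^{(\kappa-1)} \supseteq U^{*}$ the inner loop is a Goodman--Massey-type procedure: the argument behind Theorem~\ref{th:GMalgorithmworks} transfers once one knows the linearized systems \eqref{eq:innerLoopLinear} are uniquely solvable (above) and that the first inner iterate overestimates the target, which follows by comparison with a suitable affine majorant of $F_{B}$ whose iterates converge because $\sigma(Q_{B}) \leq \sigma(Q) < 1$. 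To see $\lambda^{(\kappa)} \geq \lambda^{(\kappa-1)}$ (hence $U^{(\kappa)} \supseteq U^{(\kappa-1)}$, and $\lambda^{(\kappa)} \geq \mu$ on $U^{(\kappa-1)}$): from $U^{(\kappa-2)} \subseteq U^{(\kappa-1)}$ one gets $\lambda^{(\kappa-1)} = F_{U^{(\kappa-2)}}(\lambda^{(\kappa-1)}) \leq F_{U^{(\kappa-1)}}(\lambda^{(\kappa-1)})$, so $\lambda^{(\kappa-1)}$ is a subsolution of $F_{U^{(\kappa-1)}}$, and iterating that map upward from it stays under the affine majorant and converges to the unique nonnegative fixed point of $F_{U^{(\kappa-1)}}$ (unique by the next paragraph), namely $\lambda^{(\kappa)}$. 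This closes the induction; the inner loop terminates within $n$ steps (its stable set grows) and the outer loop within $n$ steps (its overloaded set grows), for at most $n^{2}$ linear solves. Finally, when the outer loop halts one has $U^{(\kappa)} = U^{(\kappa-1)}$, so $\lambda^{(\kappa)} = F_{U^{(\kappa-1)}}(\lambda^{(\kappa)}) = F_{U^{(\kappa)}}(\lambda^{(\kappa)})$, and since the nodes outside $U^{(\kappa)}$ are exactly those with $\lambda^{(\kappa)}_{i} < \mu_{i}$ they contribute nothing to the overflow term, whence $F_{U^{(\kappa)}}(\lambda^{(\kappa)}) = F(\lambda^{(\kappa)})$; thus $\lambda^{\dagger}$ solves \eqref{eq:ourTrafficEquation} and $\lambda^{\dagger} \geq \lambda^{*} \geq 0$.

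For uniqueness, let $\lambda, \lambda'$ be two nonnegative solutions and set $\delta = \lambda - \lambda'$. Because $t \mapsto \min(t,\mu_{i})$ and $t \mapsto (t-\mu_{i})^{+}$ have nonnegative secant slopes $c_{i}, d_{i} \in [0,1]$ with $c_{i} + d_{i} = 1$, subtracting the two copies of \eqref{eq:ourTrafficEquation} gives $\delta = \delta R$, where row $i$ of $R$ is $c_{i} P_{i\cdot} + d_{i} Q_{i\cdot}$; moreover $c_{i} = 0$ for $i \in U^{*}$, since there $\lambda_{i}, \lambda'_{i} \geq \mu_{i}$ by the first paragraph. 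Writing $R = \sum_{\tau} w_{\tau} R_{\tau}$ as a convex combination over row selections $\tau \colon N \to \{P,Q\}$ with weights $w_{\tau} = \prod_{\tau(i) = P} c_{i} \prod_{\tau(i) = Q} d_{i}$, every $\tau$ with $w_{\tau} > 0$ has $\tau^{-1}(P) \subseteq N \setminus U^{*}$, so $R_{\tau} = P_{A} + Q_{N \setminus A}$ for some $A \subseteq N \setminus U^{*}$ and hence $\sigma(R_{\tau}) < 1$ by Condition~\ref{as:overflow}. It remains to show $\sigma(R) < 1$, for which I would prove the following lemma: if a matrix has each row a convex combination of the corresponding rows of finitely many nonnegative matrices and every all-endpoints selection has spectral radius $< 1$, then so does the matrix. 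This goes by induction on the number of genuinely mixed rows; for a single mixed row one uses that a nonnegative matrix $M$ has $\sigma(M) < 1$ if and only if $\det(I - \theta M) > 0$ for all $\theta \in [0,1]$, together with the observation that, with $\theta$ fixed, $\det(I - \theta R)$ is an affine function of the mixed row and hence a convex combination of the two positive endpoint values. Therefore $\sigma(R) < 1$, $I - R$ is invertible, $\delta = 0$, and together with the second paragraph this shows $\lambda^{\dagger}$ is the unique nonnegative solution. The identical computation, with $F_{B}$ in place of $F$, gives uniqueness of the nonnegative fixed point of each $F_{B}$ used above.

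The main obstacle is this spectral-radius lemma for row-convex combinations: reducing \eqref{eq:ourTrafficEquation} to a linear system is routine, but controlling the resulting perturbation matrix $R$ is not, and the determinant identity is the one ingredient that is not immediate. A secondary difficulty is the bookkeeping of the nested induction in the second paragraph, where monotonicity of the outer loop, solvability of the linear systems, and the fact that each $\lambda^{(\kappa)}$ is a genuine fixed point of $F_{U^{(\kappa-1)}}$ must be established simultaneously, with the inner loop relying on the appendix lemmas for networks without overflows.
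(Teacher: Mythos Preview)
Your strategy is sound and ultimately correct, but it diverges from the paper in two substantive places.

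\emph{Uniqueness.} The paper argues combinatorially: given two solutions $x,y$, it looks at $F=\{i:x_i>y_i\}$, sums the difference over $F$, and extracts from the resulting equalities a stochastic submatrix of some $P_A+Q_{N\setminus A}$ with $A\subset N\setminus U^{*}$, directly contradicting Condition~\ref{as:overflow}. Your route---secant slopes yielding $\delta=\delta R$ with $R$ a row-wise convex combination of $P$ and $Q$, then a lemma that $\sigma(R)<1$ whenever every endpoint selection $P_A+Q_{N\setminus A}$ has $\sigma<1$---is a genuinely different, more structural argument. Your determinant proof of that lemma is correct (Perron--Frobenius gives the characterisation $\sigma(M)<1\Leftrightarrow\det(I-\theta M)>0$ on $[0,1]$, and multilinearity in rows does the rest). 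The paper's argument is more elementary and self-contained; yours isolates a reusable spectral lemma and makes the role of Condition~\ref{as:overflow} very transparent. Note, though, that you only treat nonnegative solutions, whereas the paper's Lemma~\ref{lem:biggerSolQ} (via Lemma~\ref{lem:goodmanmasseyordered}) shows \emph{every} solution satisfies $\lambda\ge\lambda^{*}$, so the paper rules out non-nonnegative solutions as well; your downward iteration of $F_{\emptyset}$ needs a lower bound, which you get only after assuming $\lambda\ge 0$.

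\emph{Outer-loop monotonicity.} The paper compares $\lambda^{(\kappa)}$ and the inner iterates $\lambda^{(\kappa+1,\ell)}$ directly via the explicit inequality~\eqref{eq:Lsubsetinequality}. You instead introduce the fixed point $\xi$ of $F_{U^{(\kappa-1)}}$ (built by iterating $F_{U^{(\kappa-1)}}$ upward from $\lambda^{(\kappa-1)}$) and identify it with the inner-loop output. As written this is slightly circular: you use ``$\lambda^{(\kappa)}$ is the fixed point of $F_{U^{(\kappa-1)}}$'' to get $\lambda^{(\kappa)}\ge\lambda^{(\kappa-1)}$, but the identification needs $\lambda^{(\kappa)}\ge\mu$ on $U^{(\kappa-1)}$, which in turn needs $\lambda^{(\kappa)}\ge\lambda^{(\kappa-1)}$. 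The clean fix is to run the Goodman--Massey comparison inside the inner loop against $\xi$ rather than against $\lambda^{(\kappa-1)}$: one checks $\lambda^{(\kappa,1)}\ge\xi$ (exactly as you sketch for the affine majorant, using $\xi\ge\mu$ on $B$), and then each subsequent inner iterate stays $\ge\xi$; hence the output is $\ge\xi\ge\mu$ on $B$, is therefore a nonnegative fixed point of $F_{B}$, and equals $\xi$ by your uniqueness lemma for $F_{B}$. With that reordering your argument goes through.
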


It is important to note that the existence of a solution to \eqref{eq:ourTrafficEquation} under the conditions of Theorem \ref{th:ourAlgorithmWorks} actually follows from Algorithm \ref{alg:overflowAlgorithm} finding it. Thus, the proof that Algorithm \ref{alg:overflowAlgorithm} works under the specified conditions is also a proof of the existence of a solution to \eqref{eq:ourTrafficEquation}. This is in sharp contrast with Theorem~\ref{th:GMalgorithmworks}, where the existence of a solution does not require any conditions on the network and is a simple consequence of Tarski's fixed point theorem.

%%%%%%%%%%%%%%%%%%%%%%
%%%%%%%%%%%%%%%%%%%%%%
%%%%%%%%%%%%%%%%%%%%%%
%%%%%%%%%%%%%%%%%%%%%%
%%%%%%%%%%%%%%%%%%%%%%
%%%%%%%%%%%%%%%%%%%%%%
%%%%%%%%%%%%%%%%%%%%%%
%%%%%%%%%%%%%%%%%%%%%%
\section{Proofs}
\label{sec:proofs}

Some parts of the proofs for the main results (Theorem~\ref{th:GMalgorithmworks} and Theorem~\ref{th:ourAlgorithmWorks}) rely on lemmas that are presented in the appendix. We begin by proving Theorem~\ref{th:GMalgorithmworks} and break the proof into two separate components. For presentation purposes, we first prove that the Goodman--Massey algorithm presented in Algorithm~\ref{alg:GMalgorithm} finds a solution. We then separately move on to prove existence and uniqueness.

% Goodman-Massey algorithm are Lemma \ref{lem:triangularblockspecrad} and Lemma \ref{lem:positivekappa}. With these results at hand, we can prove  Theorem~\ref{th:GMalgorithmworks}.

\begin{proof} ({\em The Goodman--Massey algorithm finds a solution.)}\\
We first prove that the Goodman--Massey algorithm finds a solution to \eqref{eq:goodmanMasseyEq} in at most $n$ iterations under Condition~\ref{as:filledordrained}. The proof is an inductive proof; for readability, we spell out the first induction step.

	In the first iteration of the algorithm, we take the nonnegative vector $\lambda^{(1)} = \alpha + \mu P$ as the first candidate for the solution. If $\lambda^{(1)} \geq \mu$, then it is actually a solution and the algorithm terminates.
	
	Suppose that the first iteration does not give a solution. Then the second iteration constructs a new candidate for the solution as follows.
	Because $\lambda^{(1)}$ was not a solution, there must be at least one $\lambda^{(1)}_{i} < \mu_{i}$. Define the set of stable nodes
	\begin{align*}
		S^{(1)} &= \lbrace i \, \vert \, \lambda^{(1)}_{i} < \mu_{i} \rbrace.
	\end{align*}
	We show later that these nodes are guaranteed to be stable, in the sense that $\lambda_{i} < \mu_{i}$ for the actual solution $\lambda$ if $i \in S^{(1)}$. Now let $S = S^{(1)}$ and define the pairwise disjoint sets $D_{k} = C_{k} \cap S$ where $C_k,~ k=1,\ldots,m$, are the communicating classes of $P$. We would like to invoke Lemma~\ref{lem:triangularblockspecrad} to show that $\specrad{P_{SS}} < 1$, so we have to verify that each matrix $P_{D_{k} D_{k}}$ satisfies $\specrad{P_{D_{k} D_{k}}} < 1$. To this end, we distinguish the following two cases:
	\begin{itemize}
		\item  Suppose that $C_{k}$ can be drained. Then $\specrad{P_{C_{k} C_{k}}} < 1$ and thus $\specrad{P_{D_{k} D_{k}}} < 1$, because $D_{k} \subset C_{k}$.

		\item Suppose that $C_{k}$ cannot be drained. Then it can be filled, due to the \textsc{ni} condition. Setting $\gamma = \lambda^{(1)}$, we note that $\gamma$ is nonnegative and satisfies
		\[
		\gamma = \alpha + \mu P \ge \alpha + (\gamma \wedge \mu) P.
		\]
		Now invoking Lemma \ref{lem:positivekappa}, we conclude that $\gamma_i > \mu_i$ for some $i \in C_{k}$ and hence that $D_{k} \not= C_{k}$ (it is a proper subset). Further since $C_{k}$ is a communicating class that cannot be drained, $P_{C_{k} C_{k}}$ is stochastic. It thus follows that $\specrad{P_{D_{k} D_{k}}} < 1$ because $D_k$ is a strict subset of $C_k$.
\end{itemize}
Therefore, based on these two cases, we conclude from Lemma \ref{lem:triangularblockspecrad} that $\specrad{P_{SS}} < 1$, which implies that $I - P_{SS}$ is invertible.
	
Now set $T = N \setminus S$ and use our (at this point still intuitive) knowledge that the nodes in $S$ must be stable to define a new candidate solution $\lambda^{(2)}$ via \eqref{eq:linearGMtrafficequation}. In this case, $\lambda^{(2)}$ satisfies
	\begin{align*}
		[\lambda^{(2)}]_{S} &= [\alpha]_{S} + [\mu]_{T} [P]_{TS} + [\lambda^{(2)}]_{S}  \, [P]_{SS}, \\
		[\lambda^{(2)}]_{T} &= [\alpha]_{T} + [\mu]_{T} P_{T \, T} + [\lambda^{(2)}]_{S} \, [P]_{ST},
	\end{align*}
	and thus
	\begin{align}
		[\lambda^{(2)}]_{S} &= ( [\alpha]_{S} + [\mu]_{T} [P]_{TS} ) ([I]_{SS} - [P]_{SS})^{-1} \leq \lambda^{(1)}_{S}, \label{eq:iterlambdaineq} \\
		[\lambda^{(2)}]_{T} &= [\alpha]_{T} + [\mu]_{T} P_{T \, T} + [\lambda^{(2)}]_{S} [P]_{ST}. \nonumber
	\end{align}
In particular, $\lambda^{(2)}$ exists, and is nonnegative and unique. The existence of the inverse is a direct consequence of $\specrad{P_{SS}} < 1$, while the inequality follows from the definition of $\lambda^{(1)}$ and $S$.
	
Now illustrating the next step, we define the set of stable nodes corresponding to $\lambda^{(2)}$ via
	\begin{align*}
		S^{(2)} &= \lbrace i \, \vert \, \lambda^{(2)}_{i} < \mu_{i} \rbrace.
	\end{align*}
	Then $S^{(2)} \supset S^{(1)}$ due to \eqref{eq:iterlambdaineq}, which confirms our intuition that the nodes in $S^{(1)}$ must be stable. Note that if $S^{(2)} = S^{(1)}$, then $\lambda^{(2)}$ solves the traffic equation~\eqref{eq:goodmanMasseyEq} and we are done.
	
Suppose that the first two steps do not give us a solution of the traffic equation. In this step, we generalize the second step to an induction argument as follows. Suppose that for fixed $\kappa \in \mathbb{N}$ we have a nonnegative vector $\lambda^{(\kappa)}$ and a corresponding set of (stable) nodes $S^{(\kappa)} \subset N$ satisfying the following properties. Using the shorthand notation $x = \lambda^{(\kappa)}$, $S = S^{(\kappa)}$, and $T = N \setminus S$ as previously, it holds that $S = \lbrace x_{i} < \mu_{i} \rbrace$ and
\[
		x \geq \alpha + \mu_{T} P_{T N} + x_{S} P_{S N}.
\]
We define a new candidate solution vector $\lambda^{(\kappa + 1)} = y$, where $y$ is the solution to the linear equation
	\begin{align*}
		y &= \alpha + \mu_{T} P_{T N} + y_{S} P_{S N},
	\end{align*}
	which is equivalent to
	\begin{align*}
		[y]_{S} &= [\alpha]_{S} + [\mu]_{T} [P]_{T S} + [y]_{S} [P]_{S S}, \\
		[y]_{T} &= [\alpha]_{T} + [\mu]_{T} [P]_{T T} + [y]_{S} [P]_{S T}.
	\end{align*}
We need to  show that this actually gives us a unique, nonnegative vector $y$. The key step is the observation that $\specrad{[P]_{SS}} < 1$ or, equivalently, that $\specrad{P_{SS}} < 1$. Indeed, we have $S = \lbrace x_{i} < \mu_{i} \rbrace$ by assumption, and
\[
x \geq \alpha + \mu_{T} P_{T N} + x_{S} P_{S N} = \alpha + (x \wedge \mu) P,
\]
	so as before Lemma \ref{lem:positivekappa} implies that $\specrad{P_{S S}} < 1$. It follows that
	\begin{align*}
		[y]_{S} &= ([\alpha]_{S} + [\mu]_{T} [P]_{T S}) ([I]_{S S} - [P]_{S S})^{-1}, \\
		[y]_{T} &= [\alpha]_{T} + [\mu]_{T} [P]_{T T} + [y]_{S} [P]_{S T} + [\mu]_{T} [P]_{T T}.
	\end{align*}
	The matrix $([I]_{SS} - [P]_{SS})^{-1}$ is positive because $\specrad{[P]_{SS}} < 1$, so the solution $y$ is indeed unique and nonnegative.
	
	This gives us a nonnegative candidate solution $\lambda^{(\kappa + 1)} = y$ and we define the corresponding set of stable nodes $S^{(\kappa + 1)} = \lbrace y_{i} < \mu_{i} \rbrace$. Now there are two crucial observations. The first observation is that
	\begin{align*}
		[x]_{S} ([I]_{SS} - [P]_{SS}) &\geq [\alpha]_{S} + [\mu]_{T} [P]_{TS} = [y]_{S} ([I]_{SS} - [P]_{SS}).
	\end{align*}
	Because $([I]_{SS} - [P]_{SS})^{-1}$ is positive, this implies that $[x]_{S} \geq [y]_{S}$ and thus $S^{(\kappa + 1)} \supset S^{(\kappa)}$. The second observation is that $\lambda^{(\kappa + 1)} = y$ actually solves the traffic equation if $S^{(\kappa + 1)} = S^{(\kappa)}$. A simple induction argument then completes the proof.
\end{proof}

In the previous proof, we demonstrated that the Goodman--Massey algorithm finds a nonnegative solution to \eqref{eq:goodmanMasseyEq} under Condition \ref{as:filledordrained}. We now argue that this is the unique solution to \eqref{eq:goodmanMasseyEq} under this condition. We also prove the converse: if there exists a unique, nonnegative solution to \eqref{eq:goodmanMasseyEq}, then Condition \ref{as:filledordrained} holds.

\begin{proof} ({\em The network $(\alpha, \mu, P)$ satisfies Condition \ref{as:filledordrained} if and only if the traffic equation \eqref{eq:goodmanMasseyEq} has a unique, nonnegative solution.})\\
We first show that if the network $(\alpha, \mu, P)$ is \textsc{ni}, then there exists at most one solution to the traffic equation \eqref{eq:goodmanMasseyEq}. For this let $x$ and $y$ both be solutions to the traffic equation. Lemma \ref{lem:fdnonnegsolposfil} implies that $x$ and $y$ are nonnegative under the present conditions.
	
	Denote $F = \lbrace i \in N \, \vert \, x_{i} > y_{i} \rbrace$ and assume that $F$ is nonempty. Without loss of generality this is the negation assumption. Because $x$ and $y$ are both solutions to the traffic equation \eqref{eq:goodmanMasseyEq}, we get
	\begin{align*}
		x_{i} - y_{i}
		&= \sum_{j \in N} (x_{j} \wedge \mu_{j} - y_{j} \wedge \mu_{j}) p_{ji}
	\end{align*}
	for all $i \in N$. Summing over $F$ gives us
\begin{equation}
\label{eq:sumOfF}
		\sum_{i \in F} (x_{i} - y_{i})
		= \sum_{j \in N} \big( \sum_{i \in F} p_{ji} \big) (x_{j} \wedge \mu_{j} - y_{j} \wedge \mu_{j})
		\leq \sum_{j \in F} \big( \sum_{i \in F} p_{ji} \big) (x_{j} \wedge \mu_{j} - y_{j} \wedge \mu_{j}).
\end{equation}
The inequality follows from the fact that $x_i \leq y_i$ for $i \in N \setminus F$. This can be rewritten as
\[
\sum_{i \in F} (x_{i} - y_{i})  \le \sum_{i \in F} \eta_i (x_{i} \wedge \mu_{i} - y_{i} \wedge \mu_{i}),
\]
where $\eta_i = \sum_{j \in F} p_{ij}$. Since $x_i \ge y_i$ for $i \in F$, it follows that
\begin{equation}
\label{eq:singleElementIneq}
x_i - y_i \ge x_{i} \wedge \mu_{i} - y_{i} \wedge \mu_{i} \ge \eta_i (x_{i} \wedge \mu_{i} - y_{i} \wedge \mu_{i} ).
\end{equation}
Combining \eqref{eq:sumOfF} and \eqref{eq:singleElementIneq}, we see that both inequalities in \eqref{eq:singleElementIneq} must be equalities for all $i \in F$ and hence
\[
\eta_i = \sum_{j \in F} p_{ij} = 1
\qquad
\text{for all}
\qquad
i \in F.
\]
%	\begin{align*}
%		\sum_{i \in F} (x_{i} - y_{i})
%		&= \sum_{j \in N} \big( \sum_{i \in F} p_{ji} \big) (x_{j} \wedge \mu_{j} - y_{j} \wedge \mu_{j}) \\
%		&\leq \sum_{j \in F} \big( \sum_{i \in F} p_{ji} \big) (x_{j} \wedge \mu_{j} - y_{j} \wedge \mu_{j}) \\
%		&= \sum_{i \in F} \big( \sum_{j \in F} p_{ij} \big) (x_{i} \wedge \mu_{i} - y_{i} \wedge \mu_{i}).
%	\end{align*}
%	 while the last equality is just relabeling the indices for easier comparison.
%Since $x_{i} > y_{i}$ for all $i \in F$, the inequality implies that $x_{i} - y_{i} = x_{i} \wedge \mu_{i} - y_{i} \wedge \mu_{i}$ for all $i \in F$.	This means in particular that $x_{i} \leq \mu_{i}$ for all $i \in F$ and that $\sum_{j \in F} p_{ij} = 1$  for all $i \in F$.
This implies that $[P]_{FF}$ is a stochastic matrix, so there exists a class $C$ of $P$  with $C \subset F$ where $C$ cannot be drained. We now distinguish two cases.

If $C$ cannot be filled, then Lemma \ref{lem:fdnonnegsolposfil} implies that $x_{i} = y_{i} = 0$ for all $i \in C$, which contradicts the definition of $F$. If $C$ can be filled, then Lemma \ref{lem:positivekappa} implies that $x_{i} > \mu_{i}$ for some $i \in C$, because the network is \textsc{ni}. However, this is contradicted by the first inequality in \eqref{eq:singleElementIneq}, which implies that $x_{i} \leq \mu_{i}$ for all $i \in C$. We conclude from these contradictions that $F$ must be empty and thus $x = y$.

We now show that if the network $(\alpha, \mu, P)$ is not \textsc{ni}, then the traffic equation \eqref{eq:goodmanMasseyEq} admits a solution $\lambda$ that is not nonnegative. This means that the network $(\alpha, \mu, P)$ is \textsc{ni} if  every solution $\lambda$ to the traffic equation \eqref{eq:goodmanMasseyEq} is nonnegative.

Because the network is not \textsc{ni}, there exists at least one isolated class, say $C$. The class $C$ being isolated implies that \eqref{eq:goodmanMasseyEq} restricted to $C$ can be represented as
	\begin{align*}
		[\lambda]_{C} = ([\lambda]_{C} \wedge [\mu]_{C}) [P]_{CC},
	\end{align*}
	with $[P]_{CC}$ a stochastic, irreducible matrix. Then $[P]_{CC}$ is the transition probability matrix of an irreducible Markov chain with equilibrium distribution $\pi^{(C)} > 0$. In particular, $[\lambda]_{C} = -\pi^{(C)} < 0$ is a solution of the equation above.
	
	Let $D$ be the collection of all nodes that are not isolated. Then the network $([\alpha]_{D} , [\mu]_{D} , [P]_{DD})$ is \textsc{ni} with corresponding traffic equation
	\begin{align*}
		[\lambda]_{D} = [\alpha]_{D} + ([\lambda]_{D} \wedge [\mu]_{D}) [P]_{DD}.
	\end{align*}
	Lemma \ref{lem:ourBoyTarski} guarantees the existence of a nonnegative solution $[\lambda]_{D}$.
	
	Consequently, the traffic equation associated with $(\alpha, \mu, P)$ admits a solution $\lambda$ that is not nonnegative, where $[\lambda]_{C} = -\pi^{(C)}$ for every isolated class $C$ and $[\lambda]_{D}$ is a nonnegative solution to the traffic equation associated with the network $([\alpha]_{D} , [\mu]_{D} , [P]_{DD})$ based on the nodes that are not isolated.
\end{proof}

%%%%%%%%%%%%%%%%%%%%%%%%%
%%%%%%%%%%%%%%%%%%%%%%%%%
%%%%%%%%%%%%%%%%%%%%%%%%%
We now advance to prove Theorem~\ref{th:ourAlgorithmWorks}, for which we need the following two lemmas.
	
\begin{lemma}
	\label{lem:biggerSolQ}
	Consider the network $(\alpha, \mu, P, Q)$. Assume that $(\alpha, \mu, P)$ satisfies Condition \ref{as:filledordrained} and denote by $\lambda^{*}$ the unique, nonnegative solution to the traffic equation \eqref{eq:goodmanMasseyEq} associated with $(\alpha, \mu, P)$. If $\lambda$ is a solution to the overflow traffic equation \eqref{eq:ourTrafficEquation} associated with $(\alpha, \mu, P, Q)$, then $\lambda \geq \lambda^{*} \geq 0$.
\end{lemma}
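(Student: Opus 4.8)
The plan is to compare $\lambda$ with the Goodman--Massey iterates for the overflow-free network $(\alpha,\mu,P)$ and show that $\lambda$ dominates each of them, hence dominates their limit $\lambda^{*}$. The starting observation is that any solution $\lambda$ to the overflow traffic equation \eqref{eq:ourTrafficEquation} satisfies, entrywise,
\[
\lambda \;=\; \alpha + (\lambda\wedge\mu)P + (\lambda-\mu)^{+}Q \;\geq\; \alpha + (\lambda\wedge\mu)P,
\]
since $(\lambda-\mu)^{+}Q \geq 0$. So $\lambda$ is a supersolution of the overflow-free traffic equation \eqref{eq:goodmanMasseyEq}. Moreover, because $(\lambda\wedge\mu)P \leq \mu P$, we also get $\lambda \geq \alpha$ and in fact $\lambda \geq 0$ trivially; the real content is to turn the supersolution property into domination of $\lambda^{*}$.

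First I would recall the structure from the proof that the Goodman--Massey algorithm finds a solution: the iterates $\lambda^{(\kappa)}$ are defined via $\lambda^{(0)} = \alpha + \mu P$ (or with $S^{(0)}=\emptyset$), $S^{(\kappa)} = \{i : \lambda^{(\kappa)}_i < \mu_i\}$, and $\lambda^{(\kappa+1)}$ solving \eqref{eq:linearGMtrafficequation} with $S = S^{(\kappa)}$; under Condition~\ref{as:filledordrained} we have $\specrad{P_{S^{(\kappa)}S^{(\kappa)}}} < 1$, the iterates decrease, $S^{(\kappa)}$ increases, and $\lambda^{(\kappa)} \downarrow \lambda^{*}$ (the limit being reached in at most $n$ steps). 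Wait --- actually in that proof the iterates \emph{decrease} to $\lambda^{*}$ from above, so domination from below needs a different comparison. The cleaner route is a direct monotonicity/uniqueness argument: I would instead mimic the uniqueness proof. Suppose, for contradiction, that the set $F = \{i : \lambda^{*}_i > \lambda_i\}$ is nonempty. Since $\lambda^{*}_i = \lambda^{*}_i \wedge \mu_i$ whenever $\lambda^{*}_i \le \mu_i$ and otherwise $\lambda^{*}_i > \mu_i$, and using that $\lambda$ solves \eqref{eq:ourTrafficEquation} while $\lambda^{*}$ solves \eqref{eq:goodmanMasseyEq}, subtract the two equations restricted to $F$. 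For $i \in F$,
\[
\lambda^{*}_i - \lambda_i \;=\; \sum_{j}(\lambda^{*}_j\wedge\mu_j - \lambda_j\wedge\mu_j)p_{ji} \;-\; \sum_{j}(\lambda_j-\mu_j)^{+}q_{ji} \;\leq\; \sum_{j}(\lambda^{*}_j\wedge\mu_j - \lambda_j\wedge\mu_j)p_{ji},
\]
since the overflow term is nonnegative. Now sum over $i \in F$ and bound the $j$-sum as in \eqref{eq:sumOfF}: contributions from $j \notin F$ satisfy $\lambda^{*}_j \le \lambda_j$ hence $\lambda^{*}_j\wedge\mu_j - \lambda_j\wedge\mu_j \le 0$, so they only help the inequality. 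This yields $\sum_{i\in F}(\lambda^{*}_i - \lambda_i) \le \sum_{i\in F}\eta_i(\lambda^{*}_i\wedge\mu_i - \lambda_i\wedge\mu_i)$ with $\eta_i = \sum_{j\in F}p_{ij} \le 1$, and since $\lambda^{*}_i - \lambda_i \ge \lambda^{*}_i\wedge\mu_i - \lambda_i\wedge\mu_i \ge 0$ on $F$, every inequality must be tight; in particular $\eta_i = 1$ for all $i \in F$ with $\lambda^{*}_i\wedge\mu_i > \lambda_i\wedge\mu_i$, and $[P]_{FF}$ (restricted appropriately) is stochastic, forcing a communicating class $C \subset F$ that cannot be drained.

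From here I would reuse the two-case argument of the uniqueness proof: if $C$ cannot be filled, Lemma~\ref{lem:fdnonnegsolposfil} gives $\lambda^{*}_i = 0$ on $C$, contradicting $\lambda^{*}_i > \lambda_i \ge 0$; if $C$ can be filled, Lemma~\ref{lem:positivekappa} (applicable since the network is \textsc{ni} and $\lambda^{*} \ge \alpha + (\lambda^{*}\wedge\mu)P$) gives $\lambda^{*}_i > \mu_i$ for some $i \in C$, but tightness of the first inequality above forces $\lambda^{*}_i \le \mu_i$ on $F \supseteq C$ --- a contradiction. Hence $F = \emptyset$, i.e. $\lambda \ge \lambda^{*}$, and $\lambda^{*} \ge 0$ is Theorem~\ref{th:GMalgorithmworks}. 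The main obstacle I anticipate is bookkeeping the $\wedge$-terms carefully: one must check that the chain of inequalities $\lambda^{*}_i - \lambda_i \ge \lambda^{*}_i\wedge\mu_i - \lambda_i\wedge\mu_i \ge \eta_i(\cdots)$ holds on $F$ (it does, because $a - b \ge a\wedge c - b\wedge c \ge 0$ when $a \ge b$) and that the reduction to a stochastic block and a non-drainable class goes through verbatim despite the extra $-\sum_j(\lambda_j-\mu_j)^{+}q_{ji}$ term, which is only ever used via its sign. A secondary subtlety is ensuring Lemma~\ref{lem:positivekappa} applies to $\lambda^{*}$ with the correct inequality direction; this is immediate since $\lambda^{*}$ exactly solves \eqref{eq:goodmanMasseyEq}, so $\lambda^{*} = \alpha + (\lambda^{*}\wedge\mu)P \ge \alpha + (\lambda^{*}\wedge\mu)P$ holds with equality.
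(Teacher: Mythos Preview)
Your argument is correct in substance, but the paper takes a much shorter path. It observes that any solution $\lambda$ of \eqref{eq:ourTrafficEquation} satisfies $\lambda = (\alpha+\eps) + (\lambda\wedge\mu)P$ with $\eps = (\lambda-\mu)^{+}Q \ge 0$, and then invokes Lemma~\ref{lem:goodmanmasseyordered} directly: that lemma states precisely that if $x = \alpha + (x\wedge\mu)P$ and $y = \alpha + \eps + (y\wedge\mu)P$ with $\eps\ge 0$, then $y\ge x$. What you do instead is re-derive the content of Lemma~\ref{lem:goodmanmasseyordered} inline, following the template of the uniqueness half of Theorem~\ref{th:GMalgorithmworks}. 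Both routes rest on the same mechanism (the sum-over-$F$ inequality forcing $[P]_{FF}$ to be stochastic, then Lemma~\ref{lem:positivekappa}); the paper simply factors this mechanism out once and reuses it.

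One small point of bookkeeping in your write-up: in the case ``$C$ cannot be filled'' you appeal to $\lambda_i \ge 0$, which you have not actually established for $\lambda$ at that stage (Lemma~\ref{lem:fdnonnegsolposfil} applies to solutions of \eqref{eq:goodmanMasseyEq}, not of \eqref{eq:ourTrafficEquation}). This does no damage, because under Condition~\ref{as:filledordrained} a class that cannot be drained must be fillable (otherwise it would be isolated), so that case is vacuous. You can simply drop it, or note explicitly that \textsc{ni} forces the non-drainable class $C$ to be fillable and go straight to Lemma~\ref{lem:positivekappa}.
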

\begin{proof}
Let $\lambda$ be a solution to the overflow equation. Then $\eps =  (\lambda - \mu)^{+} Q$ is nonnegative and
	\begin{align*}
		\lambda = \alpha + \eps + (\lambda \wedge \mu) P.
	\end{align*}
	Because $(\alpha, \mu, P)$ is \textsc{ni}, Lemma \ref{lem:goodmanmasseyordered} implies that $\lambda \geq \lambda^{*}$.
\end{proof}

\begin{lemma}
	\label{lem:nonnegativeanduniqueoverflowsolution}
	Consider the network $(\alpha, \mu, P, Q)$. If it satisfies Condition \ref{as:overflow} then there exists at most one solution to the overflow equation associated with $(\alpha, \mu, P, Q)$.
\end{lemma}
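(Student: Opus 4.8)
The plan is to proceed by contradiction, exploiting the order structure of \eqref{eq:ourTrafficEquation} together with the spectral radius bound in Condition~\ref{as:overflow}. Let $\lambda$ and $\tilde{\lambda}$ both solve the overflow equation \eqref{eq:ourTrafficEquation}. By Lemma~\ref{lem:biggerSolQ} we have $\lambda \geq \lambda^{*} \geq 0$ and $\tilde{\lambda} \geq \lambda^{*} \geq 0$. Suppose, for contradiction, that $\lambda \neq \tilde{\lambda}$; after possibly interchanging the two solutions we may assume that $F := \lbrace i \in N : \lambda_{i} > \tilde{\lambda}_{i} \rbrace$ is nonempty. The goal is to construct a set $A \subset N \setminus U^{*}$ for which $P_{A} + Q_{N \setminus A}$ has spectral radius at least $1$, contradicting \eqref{eq:overflowCondition}.

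First I would run a summation argument analogous to the uniqueness proof for \eqref{eq:goodmanMasseyEq}. Subtracting the two instances of \eqref{eq:ourTrafficEquation} gives, for every $i \in N$,
\[
\lambda_{i} - \tilde{\lambda}_{i} = \sum_{j \in N} a_{j} p_{ji} + \sum_{j \in N} b_{j} q_{ji}, \qquad a_{j} := (\lambda_{j} \wedge \mu_{j}) - (\tilde{\lambda}_{j} \wedge \mu_{j}), \quad b_{j} := (\lambda_{j} - \mu_{j})^{+} - (\tilde{\lambda}_{j} - \mu_{j})^{+}.
\]
Since $x \mapsto x \wedge \mu_{j}$ and $x \mapsto (x - \mu_{j})^{+}$ are nondecreasing and add up to the identity, $a_{j} + b_{j} = \lambda_{j} - \tilde{\lambda}_{j}$, with $a_{j}, b_{j} \geq 0$ for $j \in F$ and $a_{j}, b_{j} \leq 0$ for $j \notin F$. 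Writing $\eta^{P}_{j} = \sum_{i \in F} p_{ji}$ and $\eta^{Q}_{j} = \sum_{i \in F} q_{ji}$, summing the displayed identity over $i \in F$, discarding the nonpositive $j \notin F$ contributions, and using substochasticity of $P$ and $Q$,
\[
\sum_{i \in F} (\lambda_{i} - \tilde{\lambda}_{i}) = \sum_{j \in N} ( a_{j} \eta^{P}_{j} + b_{j} \eta^{Q}_{j} ) \leq \sum_{j \in F} ( a_{j} \eta^{P}_{j} + b_{j} \eta^{Q}_{j} ) \leq \sum_{j \in F} (a_{j} + b_{j}) = \sum_{i \in F} (\lambda_{i} - \tilde{\lambda}_{i}).
\]
Hence all inequalities are equalities, so $a_{j}(1 - \eta^{P}_{j}) = 0$ and $b_{j}(1 - \eta^{Q}_{j}) = 0$ for every $j \in F$. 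Splitting $F$ by whether $\lambda_{j} \leq \mu_{j}$: if $j \in F$ and $\lambda_{j} \leq \mu_{j}$, then $\tilde{\lambda}_{j} < \lambda_{j} \leq \mu_{j}$ forces $a_{j} = \lambda_{j} - \tilde{\lambda}_{j} > 0$, hence $\eta^{P}_{j} = 1$; if $j \in F$ and $\lambda_{j} > \mu_{j}$, then $b_{j} = (\lambda_{j} - \mu_{j}) - (\tilde{\lambda}_{j} - \mu_{j})^{+} > 0$ (because $\tilde{\lambda}_{j} < \lambda_{j}$), hence $\eta^{Q}_{j} = 1$.

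Next I would package this into the contradiction. Put
\[
A := \lbrace i \in N : \lambda_{i} < \mu_{i} \rbrace \cup ( F \cap \lbrace i \in N : \lambda_{i} = \mu_{i} \rbrace ),
\]
so that $\lbrace i : \lambda_{i} < \mu_{i} \rbrace \subset A \subset \lbrace i : \lambda_{i} \leq \mu_{i} \rbrace$; thus $R := P_{A} + Q_{N \setminus A}$ is exactly the matrix that appears when \eqref{eq:ourTrafficEquation} is linearized around $\lambda$. Using $\lambda, \tilde{\lambda} \geq \lambda^{*}$ one checks $A \subset N \setminus U^{*}$: if $\lambda_{i} < \mu_{i}$ then $\lambda^{*}_{i} \leq \lambda_{i} < \mu_{i}$, so $i \notin U^{*}$; and if $i \in F$ with $\lambda_{i} = \mu_{i}$ then $\lambda^{*}_{i} \leq \tilde{\lambda}_{i} < \lambda_{i} = \mu_{i}$, so again $i \notin U^{*}$. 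Therefore Condition~\ref{as:overflow} gives $\specrad{R} < 1$. On the other hand, for $j \in F$ the $j$-th row of $R$ equals the $j$-th row of $P$ when $\lambda_{j} \leq \mu_{j}$ (whose sum over columns in $F$ is $\eta^{P}_{j} = 1$) and the $j$-th row of $Q$ when $\lambda_{j} > \mu_{j}$ (whose sum over columns in $F$ is $\eta^{Q}_{j} = 1$); hence the principal submatrix $[R]_{FF}$ is stochastic and $\specrad{[R]_{FF}} = 1$. Since $R$ is nonnegative, the spectral radius of a principal submatrix bounds that of $R$ from below, so $\specrad{R} \geq \specrad{[R]_{FF}} = 1$, contradicting $\specrad{R} < 1$. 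Thus $F = \emptyset$, i.e.\ $\lambda \leq \tilde{\lambda}$; the symmetric argument gives $\tilde{\lambda} \leq \lambda$, whence $\lambda = \tilde{\lambda}$.

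I expect the main obstacle to be the bookkeeping for the ``boundary'' nodes with $\lambda_{j} = \mu_{j}$: such a node must be placed on the correct side of $A$ so that, simultaneously, $A$ is a legitimate linearization set for $\lambda$, $A \subset N \setminus U^{*}$ (so that \eqref{eq:overflowCondition} applies), and every row of $R$ indexed by $F$ sums to $1$ over the columns in $F$; the definition of $A$ above is engineered to satisfy all three conditions at once. The remaining ingredients — the summation/equality-forcing step and the monotonicity of the spectral radius under passage to a principal submatrix of a nonnegative matrix (a standard fact, also among the technical lemmas in the appendix) — should be routine.
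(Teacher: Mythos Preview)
Your proposal is correct and follows essentially the same approach as the paper: a summation over $F=\{i:\lambda_i>\tilde\lambda_i\}$ to force row-sum equalities, then a choice of $A\subset N\setminus U^{*}$ making $[P_A+Q_{N\setminus A}]_{FF}$ stochastic, which contradicts \eqref{eq:overflowCondition}. The only differences are cosmetic---the paper partitions $F$ into three pieces $F_1,F_2,F_3$ (using both solutions) and takes $A=N\setminus(F_1\cup U^{*})$, whereas you split $F$ by the single threshold $\lambda_j\le\mu_j$ versus $\lambda_j>\mu_j$ and build $A$ from $\lambda$ alone; both routes reach the same contradiction.
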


\begin{proof}
Let $x$ and $y$ both be solutions to the overflow traffic equation \eqref{eq:ourTrafficEquation}. Lemma~\ref{lem:biggerSolQ} implies that $x \geq \lambda^{*}$ and $y \geq \lambda^{*}$ under the present conditions, so both $x$ and $y$ are nonnegative.
	
Denote $F = \lbrace i \in N \, \vert \, x_{i} > y_{i} \rbrace$ and assume that $F$ is nonempty. Without loss of generality this is the negation assumption. Because $x$ and $y$ are both solutions to the overflow equation, we get
	\begin{align*}
	x_{i} - y_{i} &= \sum_{j \in N} (x_{j} \wedge \mu_{j} - y_{j} \wedge \mu_{j}) p_{ji} + \sum_{j \in N} ( (x_{j} - \mu_{j})^{+} - (y_{j} - \mu_{j})^{+} ) q_{ji}\\
	&\le  \sum_{j \in F} (x_{j} \wedge \mu_{j} - y_{j} \wedge \mu_{j}) p_{ji} + \sum_{j \in F} ( (x_{j} - \mu_{j})^{+} - (y_{j} - \mu_{j})^{+} ) q_{ji},
	\end{align*}
	for all $i \in N$. Summing over $F = \lbrace i \in N \, \vert \, x_{i} > y_{i} \rbrace$ gives us
	\begin{align}
	%	\label{eq:overflowinequalityoverC}
	\begin{split}
	\sum_{i \in F} (x_{i} - y_{i})
	&\leq \sum_{j \in F} \big( \sum_{i \in F} p_{ji} \big) (x_{j} \wedge \mu_{j} - y_{j} \wedge \mu_{j})
	 + \sum_{j \in F} \big( \sum_{i \in F} q_{ji} \big) ((x_{j} - \mu_{j})^{+} - (y_{j} - \mu_{j})^{+})\\
	 &=  \sum_{i \in F} \big( \sum_{j \in F} p_{ij} \big) (x_{i} \wedge \mu_{i} - y_{i} \wedge \mu_{i})
	 + \sum_{i \in F} \big( \sum_{j \in F} q_{ij} \big) ((x_{i} - \mu_{i})^{+} - (y_{i} - \mu_{i})^{+}).
	\end{split}
	\end{align}
where in the second equation we just re-write with $i$ and $j$ flipped.
	
Now partition $F$ into the sets $F_{1} = \lbrace i \in F \, \vert \, y_{i} \geq \mu_{i} \rbrace$, $F_{2} = \lbrace i \in F \, \vert \, x_{i} < \mu_{i} \rbrace$, and $F_{3} = \lbrace i \in F \, \vert \, x_{i} \geq \mu_{i}, y_{i} < \mu_{i} \rbrace$. Then we can rewrite the right-hand side of the equation above to obtain
	\begin{align*}
	\begin{split}
	\sum_{i \in F} (x_{i} - y_{i})
	&\leq \sum_{i \in F_{1}} \big( \sum_{j \in F} q_{ij} \big) ((x_{i} - \mu_{i})^{+} - (y_{i} - \mu_{i})^{+}) \\
	&\phantom{{} \leq {}} {} + \sum_{i \in F_{2}} \big( \sum_{j \in F} p_{ij} \big) (x_{i} - y_{i}) \\
	&\phantom{{} \leq {}} {} + \sum_{i \in F_{3}} \bigg( \big( \sum_{j \in F} p_{ij} \big) (\mu_{i} - y_{i}) + \big( \sum_{j \in F} q_{ij} \big) (x_{i} - \mu_{i})^{+} \bigg)\\
	&\leq \sum_{i \in F_{1}} \big( \sum_{j \in F} q_{ij} \big) (x_{i} - y_{i}) \\
	&\phantom{{} \leq {}} {} + \sum_{i \in F_{2}} \big( \sum_{j \in F} p_{ij} \big) (x_{i} - y_{i}) \\
	&\phantom{{} \leq {}} {} + \sum_{i \in F_{3}} \bigg( \big( \sum_{j \in F} p_{ij} \big) (\mu_{i} - y_{i}) + \big( \sum_{j \in F} q_{ij} \big) (x_{i} - \mu_{i})^{+} \bigg).
	\end{split}
	\end{align*}
Because $x > y$ on $F$, it follows from this inequality that
	\begin{align}
	\label{eq:stochasticmatrixequalities}
	\begin{split}
	\sum_{j \in F} q_{ij} = 1 \text{ for all } i \in F_{1}, \\
	\sum_{j \in F} p_{ij} = 1 \text{ for all } i \in F_{2}, \\
	\sum_{j \in F} p_{ij} = 1 \text{ for all } i \in F_{3}.
	\end{split}
	\end{align}
	
	We now argue that this contradicts Condition \ref{as:overflow}. To do so, we can create a $|F| \times |F|$ matrix $R$, using the elements $q_{ij}$ for $i \in F_{1}$ and $p_{ij}$ for $i \in F_{2} \cup F_{3}$. From \eqref{eq:stochasticmatrixequalities}, $R$ is stochastic and hence $\specrad{R} = 1$. Further $\specrad{P_{A} + Q_{N \setminus A}} = 1$ for any matrix $P_{A} + Q_{N \setminus A}$ that contains $R$ as a submatrix. We now create such a matrix by defining $A = N \setminus (F_{1} \cup U^*)$.
	
	Now recall that $x \geq \lambda^{*}$ and $y \geq \lambda^{*}$, and that $U^* = \lbrace i \in N \, \vert \, \lambda^{*}_{i} \geq \mu_{i} \rbrace$. It follows that $F_{2} \subset N \setminus U^*$ and $F_{3} \subset N \setminus U^*$. Also observe that $F_{2} \cup F_{3} \subset A$. This implies that we can represent $R$ as
	\begin{align*}
		R &= [P_{A} + Q_{N \setminus A}]_{FF}.
	\end{align*}
However, since $A \subset N \setminus U^*$, Condition \ref{as:overflow} stipulates that $\specrad{P_{A} + Q_{N \setminus A}} < 1$. This is a contradiction, because $\specrad{R} = 1$.
\end{proof}

We are now in a position to prove Theorem~\ref{th:ourAlgorithmWorks}.

\begin{proof}
Lemma~\ref{lem:biggerSolQ} implies that every solution is nonnegative, while Lemma~\ref{lem:nonnegativeanduniqueoverflowsolution} implies that there is at most one solution. To complete the proof, we now constructively show that Algorithm~\ref{alg:overflowAlgorithm} finds a solution  under Condition~\ref{as:overflow}.

	The algorithm starts by defining $U^{(0)} = \emptyset$. The first iteration of the outer loop is equivalent to the Goodman--Massey algorithm. Because we assume that $(\alpha, \mu, P)$ is \textsc{ni}, the first iteration of the outer loop yields the unique, nonnegative vector $\lambda^{(1)}$ satisfying
\begin{equation}
\label{eq:GMinOverflowAlgSol1}
\lambda^{(1)} = \alpha + (\lambda^{(1)} \wedge \mu) P
\end{equation}
as a first candidate for the solution. This specifies the set of unstable, overflowing nodes $U^{(1)} = \lbrace i \in N \, \vert \, \lambda^{(1)}_{i} \ge \mu_{i} \rbrace$, which coincides with $U^*$.
	If $U^{(1)}$ is empty, then $\lambda^{(1)}$ is actually a solution to the overflow traffic equation and the algorithm terminates.
	In case the first iteration does not lead to a solution, the algorithm performs another iteration of the outer loop and generates a new candidate solution $\lambda^{(2)}$.
	
	The following induction argument shows that the algorithm terminates after at most $n$ iterations of the outer loop and yields the solution of the overflow traffic equation \eqref{eq:ourTrafficEquation}.
	
	Consider iteration $\kappa \geq 1$ of the outer loop. Our induction hypothesis is that iteration $\kappa$ yields $\lambda^{(\kappa)}$ satisfying the partly linearized overflow equation \eqref{eq:innerLoopEq} with $A = N \setminus U^{(\kappa)}$ and $B = U^{(\kappa - 1)}$:
	\begin{align}
		\label{eq:inHypothesisPartlyLinearized}
		\lambda^{(\kappa)} = \alpha + (\lambda^{(\kappa)} \wedge \mu) P_{N \setminus U^{(\kappa)}} + \mu P_{U^{(\kappa)}} + (\lambda^{(\kappa)} - \mu) Q_{U^{(\kappa - 1)}},
	\end{align}
	where $U^{(\kappa)} = \lbrace i \in N \, \vert \, \lambda^{(\kappa)}_{i} \geq \mu_{i} \rbrace$ as specified in the algorithm. A second part of our induction hypothesis is that $U^{(\kappa - 1)} \subset U^{(\kappa)}$ and $U^* \subset U^{(\kappa)}$.
	
	We start by deriving some properties of $\lambda^{(\kappa)}$. From \eqref{eq:inHypothesisPartlyLinearized} it follows that
	\begin{align*}
		\lambda^{(\kappa)} = \alpha + \lambda^{(\kappa)} P_{N \setminus U^{(\kappa)}} + \mu P_{U^{(\kappa)}} + (\lambda^{(\kappa)} - \mu) Q_{U^{(\kappa - 1)}}.
	\end{align*}
	The induction assumption $U^{(\kappa - 1)} \subset U^{(\kappa)}$ together with nonnegativity of the vector $[\lambda^{(\kappa)} - \mu]_{U^{(\kappa)}}$ and the matrix $Q$ then implies that
	\begin{align*}
		\lambda^{(\kappa)} \leq \alpha + \lambda^{(\kappa)} P_{N \setminus U^{(\kappa)}} + \mu P_{U^{(\kappa)}} + (\lambda^{(\kappa)} - \mu) Q_{U^{(\kappa)}}.
	\end{align*}
	Now let $L \subset N \setminus U^{(\kappa)}$. Based on the previous inequality, we obtain that
	\begin{align*}
		\lambda^{(\kappa)} \leq \alpha + \lambda^{(\kappa)} P_{L} + \mu P_{(N \setminus U^{(\kappa)}) \setminus L} + \mu P_{U^{(\kappa)}} + (\lambda^{(\kappa)} - \mu) Q_{U^{(\kappa)}}.
	\end{align*}
	Since $N \setminus U^{(\kappa)} \subset N \setminus U^*$ as an immediate result of the induction assumption, Condition~\ref{as:overflow} guarantees that $\specrad{P_{N \setminus U^{(\kappa)}} + Q_{U^{(\kappa)}}} < 1$. With $L$ being a subset of $N \setminus U^{(\kappa)}$, we also have
	\begin{align}
		\label{eq:spLessQ1}
		\specrad{P_{L} + Q_{U^{(\kappa)}}} < 1.
	\end{align}
	Consequently, $(I - P_{L} - Q_{U^{(\kappa)}} )^{-1}$ exists and is nonnegative, so
	\begin{align}
		\label{eq:Lsubsetinequality}
		\lambda^{(\kappa)} \leq ( \alpha + \mu P_{(N \setminus U^{(\kappa)}) \setminus L} + \mu P_{U^{(\kappa)}} - \mu Q_{U^{(\kappa)}} ) (I - P_{L} - Q_{U^{(\kappa)}} )^{-1}
	\end{align}
	for every set of nodes $L \subset N \setminus U^{(\kappa)}$.
	
	We now proceed with the induction step. We show that, given the results of iteration $\kappa$ of the outer loop, iteration $\kappa + 1$ finds $\lambda^{(\kappa + 1)}$ satisfying the partly linearized overflow equation
	\begin{align}
		\label{eq:kappaPlusOneOuterLoop}
		\lambda^{(\kappa + 1)} = \alpha + (\lambda^{(\kappa + 1)} \wedge \mu) P_{N \setminus U^{(\kappa + 1)}} + \mu P_{U^{(\kappa + 1)}} + (\lambda^{(\kappa + 1)} - \mu) Q_{U^{(\kappa)}},
	\end{align}
	where $U^{(\kappa + 1)} = \lbrace i \in N \, \vert \, \lambda^{(\kappa + 1)}_{i} \geq \mu_{i} \rbrace$ as specified in the algorithm. To show this, we analyze the inner loop and prove that it yields the desired solution,  $\lambda^{(\kappa + 1)}$.
	
	The key property here is \eqref{eq:spLessQ1}, which holds for every $L \subset N \setminus U^{(\kappa)}$. Observe that in iteration $\ell+1$ of the inner loop, we consider $L = S^{(\ell)} \subset N \setminus U^{(\kappa)}$ and obtain a candidate solution of \eqref{eq:innerLoopLinear} that has the form
	\begin{align}
	\label{eq:ellInnerEquation}
		\lambda^{(\kappa + 1 , \ell+1)} = \alpha + \lambda^{(\kappa + 1 , \ell+1)} P_{L} + \mu P_{(N \setminus U^{(\kappa)}) \setminus L} + \mu P_{U^{(\kappa)}} + (\lambda^{(\kappa + 1, \ell+1)} - \mu) Q_{U^{(\kappa)}}.
	\end{align}
	Now using \eqref{eq:spLessQ1}, similar arguments as in the proof of Theorem~\ref{th:GMalgorithmworks} establish that $\lambda^{(\kappa + 1 , \ell + 1)} \leq \lambda^{(\kappa + 1 , \ell)}$, and that the inner loop terminates after at most $n$ iterations and returns the solution $\lambda^{(\kappa + 1)}$ satisfying \eqref{eq:kappaPlusOneOuterLoop}.
	
	Moreover, we know from \eqref{eq:Lsubsetinequality} and \eqref{eq:ellInnerEquation} that $\lambda^{(\kappa + 1 , \ell)} \geq \lambda^{(\kappa)}$ for every iteration $\ell$ of the inner loop. This holds in particular for the final iteration, so $\lambda^{(\kappa + 1)} \geq \lambda^{(\kappa)}$. This implies that $U^{(\kappa)} \subset U^{(\kappa + 1)}$ and that $U^* \subset U^{(\kappa + 1)}$. Thus the induction hypothesis is also satisfied for $\kappa + 1$.
	
	The base of the induction is satisfied due to \eqref{eq:GMinOverflowAlgSol1} and the fact that $U^{(1)} = U^*$. Hence, the induction hypothesis holds for all $\kappa \geq 1$.
	
	The algorithm terminates after at most $n + 1$ iterations of the outer loop, because $U^{(\kappa)} \subset U^{(\kappa + 1)}$. Note that $\lambda^{(\kappa + 1)}$ is a solution of the overflow traffic equation~\eqref{eq:ourTrafficEquation} if $U^{(\kappa)} = U^{(\kappa + 1)}$ and thus the algorithm terminates. This shows that the algorithm terminates after finitely many iterations and finds the solution to \eqref{eq:ourTrafficEquation}.

	We end this proof by showing that an upper bound for the total number of iterations of the inner loop of the algorithm is given by
	\begin{equation}
		\label{eq:maxNumIterations}
		1 + \frac{n(n + 1)}{2}.
	\end{equation}
	In a worst case scenario, all nodes are overflowing, the outer loop adds them one by one, and the inner loop iterates over all potentially stable nodes until finding the new overflowing node that is to be added in the outer loop. In this case, the first iteration of the outer loop finds one overflowing node and the inner loop needs $n$ iterations to find it. Then the second iteration of the outer loop finds the second overflowing node and the inner loop iterates over the remaining $n - 1$ potentially stable nodes to find it. This continues until all nodes turn out to be overflowing, which takes $\sum_{i = 1}^{n} (n + 1 - i) = n (n + 1) / 2$ iterations of the inner loop in this scenario. In its current formulation, Algorithm~\ref{alg:overflowAlgorithm} also needs one additional iteration to determine that a solution is found. This leads to the upper bound \eqref{eq:maxNumIterations}. 	
	Hence, the total number of iterations is bounded by $n^2$ for every network with $n \geq 2$ nodes.
	
	%\bluetext{Finally, we note that $\lambda^{(\kappa + 1)}$ is a solution of the overflow traffic equation~\eqref{eq:ourTrafficEquation} if $U^{(\kappa)} = U^{(\kappa + 1)}$ and thus the algorithm terminates.}
	%Now there are three crucial observations.
	%The first is that the first iteration of the outer loop always gives us a nonnegative candidate solution $\lambda^{(1)}$ and that $\lambda^{(1)}$ satisfies the induction hypotheses.
	
	%The third is that $\lambda^{(\kappa + 1)}$ is a solution of the overflow traffic equation \eqref{eq:ourTrafficEquation} if $U^{(\kappa)} = U^{(\kappa + 1)}$ and thus the algorithm terminates. %We conclude that Algorithm \ref{alg:overflowAlgorithm} indeed finds the solution to \eqref{eq:ourTrafficEquation} under the assumptions of Theorem \ref{th:ourAlgorithmWorks}.
\end{proof}

%%%%%%%%%%%%%%%%%%%%%%%%%%%%%%%%%%%%%%
%%%%%%%%%%%%%%%%%%%%%%%%%%%%%%%%%%%%%%
%%%%%%%%%%%%%%%%%%%%%%%%%%%%%%%%%%%%%%
%%%%%%%%%%%%%%%%%%%%%%%%%%%%%%%%%%%%%%
%%%%%%%%%%%%%%%%%%%%%%%%%%%%%%%%%%%%%%
%%%%%%%%%%%%%%%%%%%%%%%%%%%%%%%%%%%%%%
%%%%%%%%%%%%%%%%%%%%%%%%%%%%%%%%%%%%%%
\section{Example Networks}
\label{sec:numExample}

We now provide four examples for illustrating the computational ability of our algorithm as well as for focusing on some subtle aspects. We first demonstrate the effectiveness of the overflow algorithm on a structured family of networks in Example~1. This is followed by Example~2  where we present a worst-case scenario in terms of running time of the algorithm.  Example~3  helps understand the difference between the \textsc{fd} condition from \cite{goodman1984nej} and our \textsc{ni} condition (Condition~\ref{as:filledordrained}). We then close with Example~4 presenting a scenario in which Condition~\ref{as:overflow} is not satisfied and there is no unique solution to the overflow traffic equation~\eqref{eq:ourTrafficEquation}.

	\begin{figure}
		\includegraphics[width=.5\textwidth]{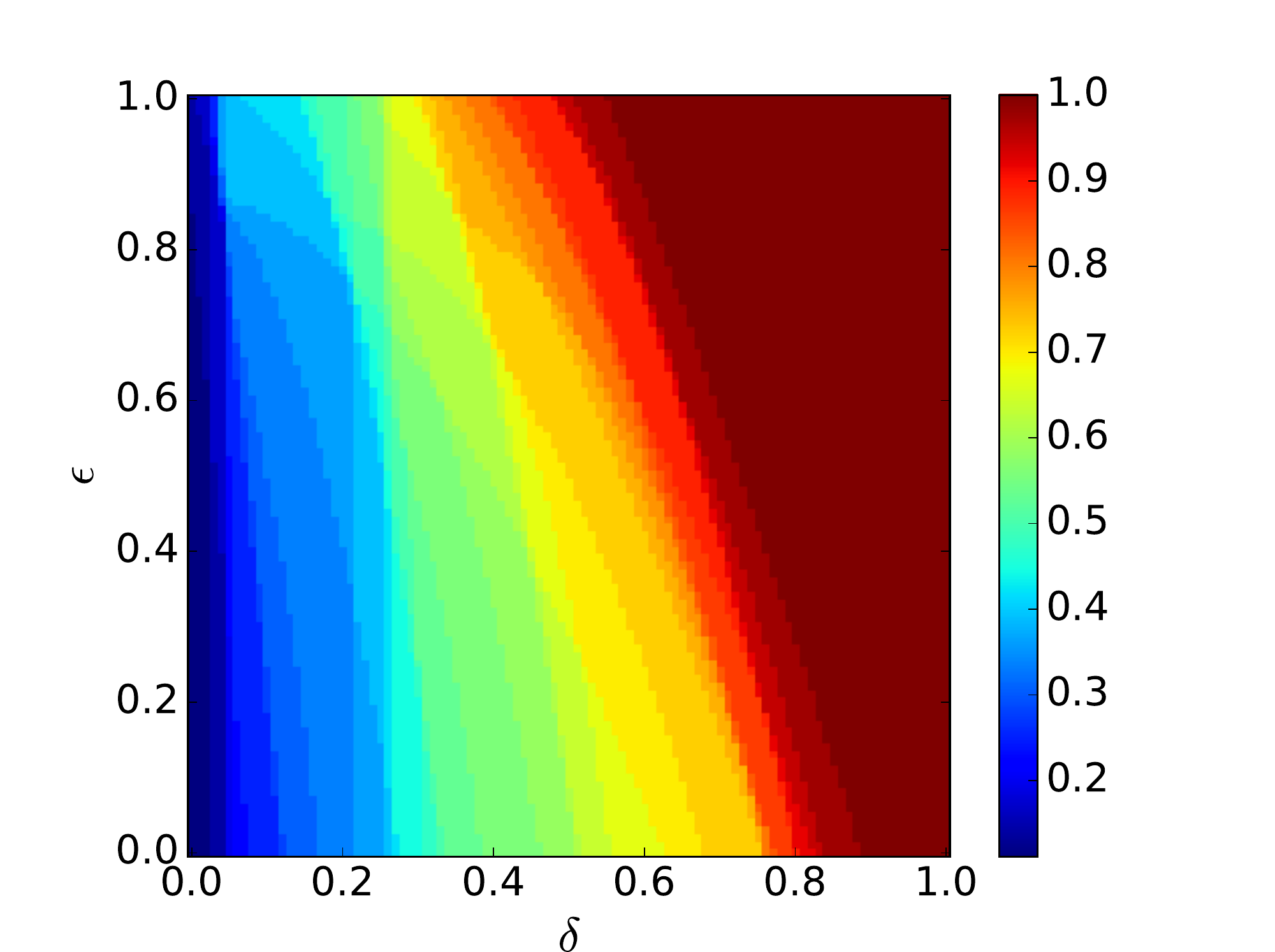}
		\includegraphics[width=.5\textwidth]{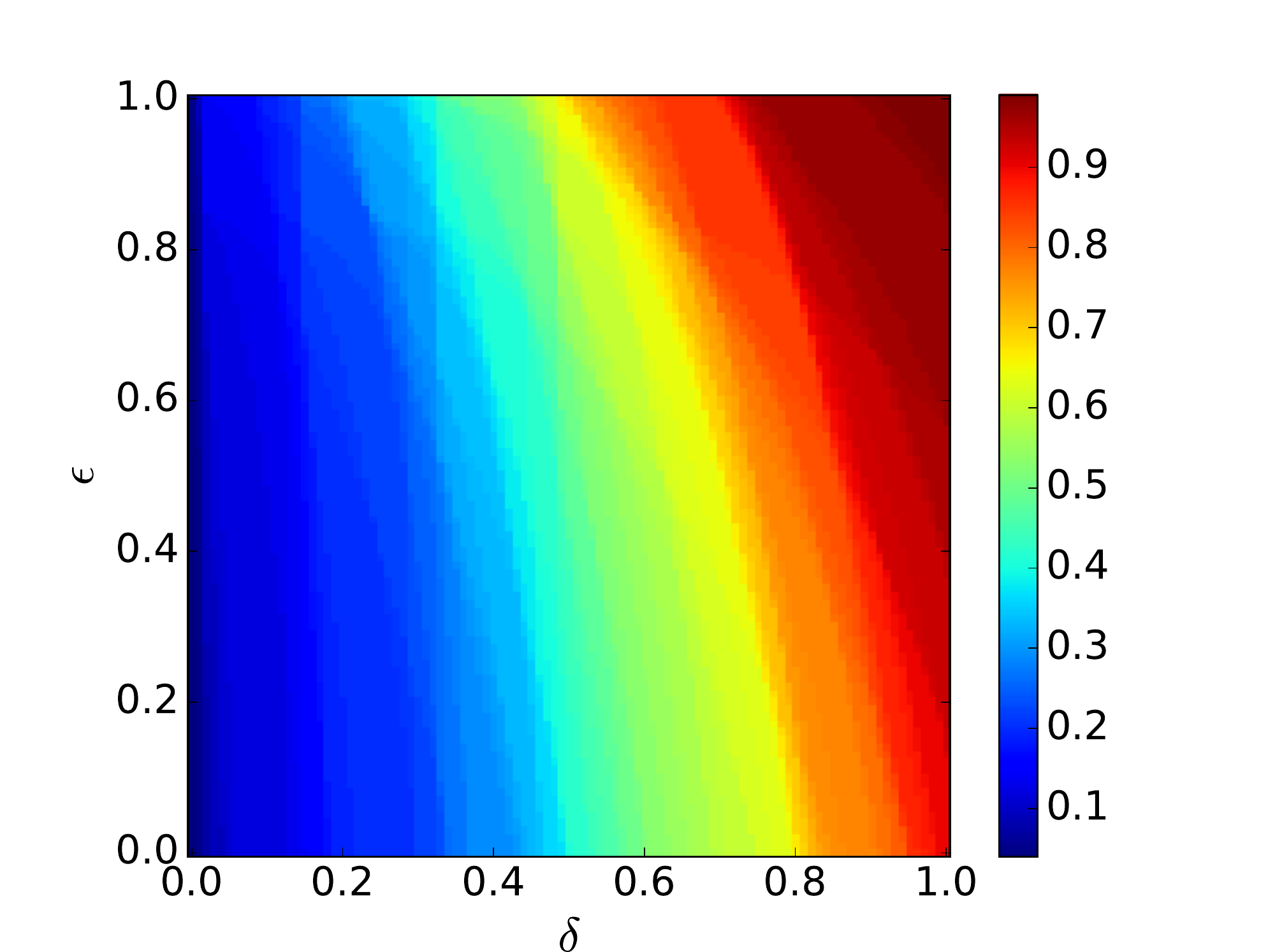}
		\caption{		\label{fig:heatmaps}
		Heat maps of the fraction of overflowing nodes for the family of networks
		%the network $(\alpha, \mu, P_\delta, Q_\epsilon)_m$
		of Example~1 over a range of $\delta$ and $\eps$. Left panel: $m = 3$ and $n = 36$. Right panel: $m = 5$ and $n = 100$.}
	\end{figure}

%%%%%%%%%%%%%%%%%%%%%%%%%%%%%%%%
%%%%%%%%%%%%%%%%%%%%%%%%%%%%%%%%
%%%%%%%%%%%%%%%%%%%%%%%%%%%%%%%%
%%%%%%%%%%%%%%%%%%%%%%%%%%%%%%%%
\subsection*{Example 1: Implementation of the overflow algorithm}

Starting with the computational example, we specify a family of networks $(\alpha, \mu, P_\delta, Q_\epsilon)_m$, which is parametrized by  $\delta \in [0, 1]$, $\epsilon \in [0, 1]$, and an integer $m \ge 2$. Given $m$, the number of nodes in the network is $n = 4m^2$. The matrix $P_{\delta}$ depends on $\delta$ and the matrix $Q_{\epsilon}$ depends on $\epsilon$ in a manner specified at the end of this example. It follows directly from that specification that $P_\delta 1 < 1$ and $Q_\epsilon 1 < 1$, so Condition~\ref{as:overflow} holds and hence Theorem~\ref{th:ourAlgorithmWorks} ensures that there is a unique solution to \eqref{eq:ourTrafficEquation} for every network $(\alpha, \mu, P_\delta, Q_\epsilon)_m$.

Our idea is to parameterize this family of networks in such a way that the number of overflowing nodes can significantly vary for different model parameters. For a given network specified by $(m, \epsilon, \delta)$ and a given solution $\lambda^\dagger$ to \eqref{eq:ourTrafficEquation}, there is a set $U^\dagger := \lbrace i \, : \, \lambda_i^\dagger \ge \mu_i \rbrace$ of overflowing nodes. Its magnitude $|U^\dagger|$ can range from $0$ to $n$ and we denote the proportion of overflowing nodes by
\[
{\cal F}_{m,\epsilon,\delta} := \frac{\big| U^\dagger \big|}{n}.
\]

Figure~\ref{fig:heatmaps} summarizes results from a computational experiment for $m=3$ ($n=36$) and $m=5$ ($n=100$). In the experiment, we vary $\epsilon$ and $\delta$ over a fine grid, and for each point we execute Algorithm~\ref{alg:overflowAlgorithm}, yielding $\lambda^\dagger$ and $U^\dagger$, for plotting a heat map of ${\cal F}_{m,\epsilon,\delta}$. %Theorem~\ref{th:ourAlgorithmWorks} ensures that $\lambda^\dagger$ is the unique solution of \eqref{eq:ourTrafficEquation}.
%We also numerically verify that $\lambda^\dagger$ is indeed a non-negative solution of \eqref{eq:ourTrafficEquation} after each algorithm completion.
%
As can be seen from Figure~\ref{fig:heatmaps}, as we vary $\epsilon$ and $\delta$, the proportion of overflowing nodes ${\cal F}_{m,\epsilon,\delta}$ varies between $0$ and $1$ and in general ${\cal F}_{m,\epsilon,\delta}$  is non-decreasing in both $\epsilon$ and $\delta$.

As we used a grid of step size $0.01$ for both $\epsilon$ and $\delta$ in the computational experiment, each of the plots in the figure represents computational results from slightly more than $10,000$ executions of Algorithm~\ref{alg:overflowAlgorithm}. These executions took less than an hour on a standard 2019 desktop computer with a Python implementation of the algorithm (available at \cite{GitHub:overflow-algorithm}). Our implementation is based directly on the pseudo-code of Algorithm~\ref{alg:overflowAlgorithm} and does not employ any optimizations. For example, each iteration of the inner loop solves the linear equations \eqref{eq:innerLoopLinear}, without trying to make use of solutions from previous iterations or efficient matrix factorizations.

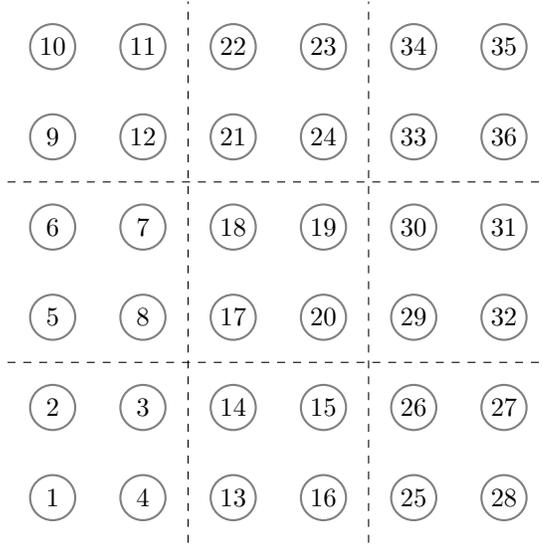
\begin{figure}
	\centering
\usetikzlibrary{arrows}
\begin{tikzpicture}[xscale=1.2,yscale=1.2]
	\tikzstyle{place} = [circle,draw=black!50,thick, inner sep=0pt,minimum size=6mm];
\tikzstyle{noline} = [circle,
inner sep=0pt,minimum size=7mm]

\draw [dashed] (0.5,2.5) -- (6.5,2.5);
\draw [dashed] (0.5,4.5) -- (6.5,4.5);

\draw [dashed] (2.5,0.5) -- (2.5,6.5);
\draw [dashed] (4.5,0.5) -- (4.5,6.5);

\node [place] (v1) at (1,1){1};
\node [place] (v2) at (1,2){2};
\node [place] (v3) at (2,2){3};
\node [place] (v4) at (2,1){4};

\node [place] (v5) at (1,3){5};
\node [place] (v6) at (1,4){6};
\node [place] (v7) at (2,4){7};
\node [place] (v8) at (2,3){8};

\node [place] (v9) at (1,5){9};
\node [place] (v10) at (1,6){10};
\node [place] (v11) at (2,6){11};
\node [place] (v12) at (2,5){12};

%%%%

\node [place] (v13) at (3,1){13};
\node [place] (v14) at (3,2){14};
\node [place] (v15) at (4,2){15};
\node [place] (v16) at (4,1){16};

\node [place] (v17) at (3,3){17};
\node [place] (v18) at (3,4){18};
\node [place] (v19) at (4,4){19};
\node [place] (v20) at (4,3){20};

\node [place] (v21) at (3,5){21};
\node [place] (v22) at (3,6){22};
\node [place] (v23) at (4,6){23};
\node [place] (v24) at (4,5){24};

%%%%

\node [place] (v25) at (5,1){25};
\node [place] (v26) at (5,2){26};
\node [place] (v27) at (6,2){27};
\node [place] (v28) at (6,1){28};

\node [place] (v29) at (5,3){29};
\node [place] (v30) at (5,4){30};
\node [place] (v31) at (6,4){31};
\node [place] (v32) at (6,3){32};

\node [place] (v33) at (5,5){33};
\node [place] (v34) at (5,6){34};
\node [place] (v35) at (6,6){35};
\node [place] (v36) at (6,5){36};
\end{tikzpicture}
		\caption{\label{fig:cellNet1} Layout of nodes in an $(\alpha, \mu, P_\delta, Q_\epsilon)_m$ network with $m=3$. There are $9$ cells and $4$ nodes per cell.}
	\end{figure}

To illustrate the effectiveness of our algorithm, let us consider an alternative simple brute-force method. Suppose that we try to solve \eqref{eq:ourTrafficEquation} for a single $(\epsilon, \delta)$ pair with $m=5$ ($n=100$). That is a single point in the right hand plot of Figure~\ref{fig:heatmaps}. The brute-force method considers every $S \subset N$ and finds the solution to the linear equation \eqref{eq:innerLoopLinear} with $S$ and $B = N \setminus S$. Then it checks whether that solution to \eqref{eq:innerLoopLinear} also satisfies \eqref{eq:ourTrafficEquation} and if it does, it stops and returns the solution to \eqref{eq:ourTrafficEquation}. Such a method only works well for problems with very small $n$. However, in our case there are $2^{100}$ subsets and this brute force method would effectively never succeed.

For reproducibility, we end this example by explicitly defining the family of networks $(\alpha, \mu, P_\delta, Q_\epsilon)_m$, consistent with its numerical implementation at \cite{GitHub:overflow-algorithm}. For any $m \ge 2$, the nodes are best represented in a rectangular grid made of $m \times m$ {\em cells} where each cell has $4$ nodes. We plot this for the case of $m=3$ ($n=36$) in Figure~\ref{fig:cellNet1}. Schematically cell boundaries are marked by dashed lines and nodes are marked by circles with numbering following the pattern as in the figure.

For these networks the input vector $\alpha = \begin{bmatrix} n^{2} & 0 & \dotsc & 0 \end{bmatrix}$ and the capacity vector $\mu = \begin{bmatrix} 1 & 1 & \dotsc & 1 \end{bmatrix}$. In other words, all nodes have unit capacity and input only arrives to the `south-west' node as appearing in Figure~\ref{fig:cellNet1} at a rate equal to the total capacity of all nodes.

Now consider the parameters $\delta$ and $\epsilon$ that yield different behaviors. The parameter $\delta$ regulates certain flows between nodes via the routing matrix $P_{\delta}$, while $\eps$ regulates certain overflows between nodes via the overflow matrix $Q_\epsilon$. These two $n \times n$ matrices are mostly sparse and only allow flows between a few neighboring nodes (where `neighboring' is defined according to the layout of Figure~\ref{fig:cellNet1}).

%Now consider the parameters $\delta$ and $\epsilon$ that yield different behaviors. The parameter $\delta$ regulates certain flows between nodes that arise according to $P$, while $\eps$ regulates certain overflows between nodes via $P_\delta$ and $Q_\epsilon$ respectively. These $n \times n$ matrices are mostly sparse and only allow flows between a few neighboring nodes (where `neighboring' is defined according to the layout of Figure~\ref{fig:cellNet1}).

We specify the exact values of $P_\epsilon$ and $Q_\delta$ in Figure~\ref{fig:cellNetPQ}, where we consider the four nodes of an arbitrary cell and illustrate flows within the cell and to certain nodes in adjacent cells. The pattern of the network follows this structure throughout, with the exception that boundary effects should be ignored if there is no neighboring cell/node. Consider for example $Q_\epsilon$ and inspect both Figure~\ref{fig:cellNet1} and Figure~\ref{fig:cellNetPQ} (right). Then an (over)flow of $\epsilon$ exists between node $20$ and node $29$ (in the adjacent cell to the `east'). Similarly, there is an (over)flow of $\epsilon$ from $8$ to $17$. However, there no such (over)flow coming from the west into $5$, because its cell is on the western boundary. %and there isn't such an (over)flow out of $32$ as it is on the eastern boundary.
%Similar boundary effects are with the `North' and `South' boundaries' and similar for the matrix flows of matrix $P_\delta$.

	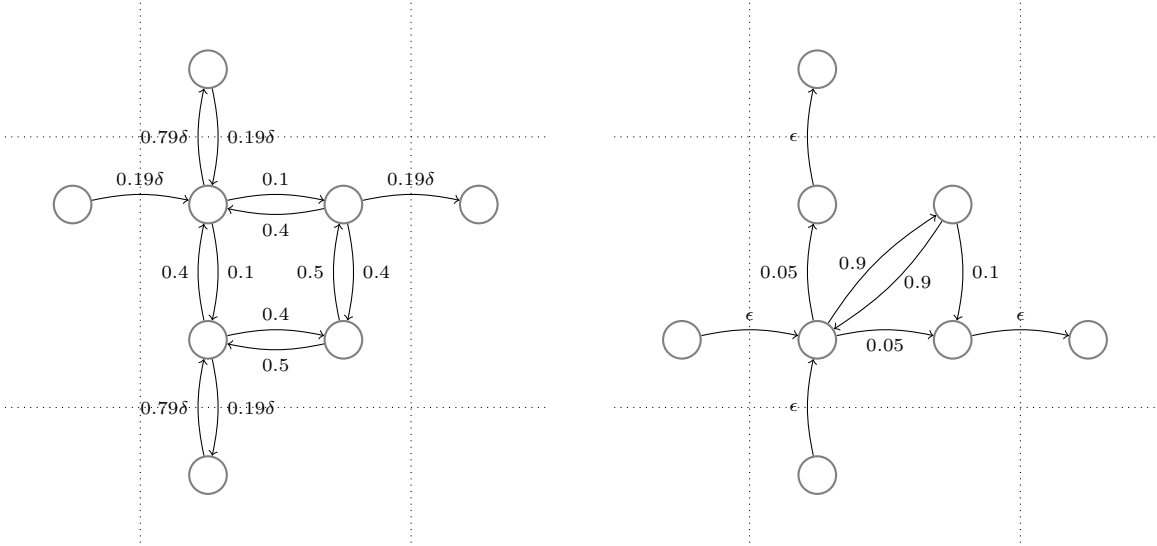
\begin{figure}
	\centering
\usetikzlibrary{arrows}
\begin{tikzpicture}[xscale=1.8,yscale=1.8]
	\tikzstyle{place} = [circle,draw=black!50,thick, inner sep=0pt,minimum size=5mm];
\tikzstyle{noline} = [circle,
inner sep=0pt,minimum size=7mm]

\draw [dotted] (1.5,2.5) -- (5.5,2.5);
\draw [dotted] (1.5,4.5) -- (5.5,4.5);
\draw [dotted] (2.5,1.5) -- (2.5,5.5);
\draw [dotted] (4.5,1.5) -- (4.5,5.5);

%\draw[dotted] (2,2) -- (1.25,1.25);
%\draw[dotted] (5,2) -- (5.75,1.25);
%\draw[dotted] (2,5) -- (1.25,5.75);
%\draw[dotted] (5,5) -- (5.75,5.75);

\node [place] (v7) at (2,4){};
\node [place] (v14) at (3,2){};
\node [place] (v17) at (3,3){};
\node [place] (v18) at (3,4){};
\node [place] (v19) at (4,4){};
\node [place] (v20) at (4,3){};
\node [place] (v21) at (3,5){};
\node [place] (v30) at (5,4){};

\draw[bend left = 12, ->] (v7) to node [auto] {\scriptsize $0.19\delta$} (v18);

\draw[bend left = 12, ->] (v18) to node [auto] {\scriptsize $0.79\delta$} (v21);
\draw[bend left = 12, ->] (v21) to node [auto] {\scriptsize $0.19\delta$} (v18);

\draw[bend left = 12, ->] (v18) to node [auto] {\scriptsize $0.1$} (v19);
\draw[bend left = 12, ->] (v19) to node [auto] {\scriptsize $0.4$} (v18);

\draw[bend left = 12, ->] (v17) to node [auto] {\scriptsize $0.4$} (v18);
\draw[bend left = 12, ->] (v18) to node [auto] {\scriptsize $0.1$} (v17);

\draw[bend left = 12, ->] (v17) to node [auto] {\scriptsize $0.4$} (v20);
\draw[bend left = 12, ->] (v20) to node [auto] {\scriptsize $0.5$} (v17);

\draw[bend left = 12, ->] (v20) to node [auto] {\scriptsize $0.5$} (v19);
\draw[bend left = 12, ->] (v19) to node [auto] {\scriptsize $0.4$} (v20);

\draw[bend left = 12, ->] (v19) to node [auto] {\scriptsize $0.19\delta$} (v30);

\draw[bend left = 12, ->] (v14) to node [auto] {\scriptsize $0.79\delta$} (v17);
\draw[bend left = 12, ->] (v17) to node [auto] {\scriptsize $0.19\delta$} (v14);

%%%%%%%
\newcommand{\ttm}{+4.5}

\draw [dotted] (1.5\ttm,2.5) -- (5.5\ttm,2.5);
\draw [dotted] (1.5\ttm,4.5) -- (5.5\ttm,4.5);
\draw [dotted] (2.5\ttm,1.5) -- (2.5\ttm,5.5);
\draw [dotted] (4.5\ttm,1.5) -- (4.5\ttm,5.5);

\node [place] (vb8) at (2\ttm,3){};
\node [place] (vb14) at (3\ttm,2){};
\node [place] (vb17) at (3\ttm,3){};
\node [place] (vb18) at (3\ttm,4){};
\node [place] (vb19) at (4\ttm,4){};
\node [place] (vb20) at (4\ttm,3){};
\node [place] (vb21) at (3\ttm,5){};
\node [place] (vb29) at (5\ttm,3){};

\draw[bend left = 12, ->] (vb14) to node [auto] {\scriptsize $\epsilon$} (vb17);
\draw[bend left = 12, ->] (vb17) to node [auto] {\scriptsize $0.05$} (vb18);
\draw[bend left = 12, ->] (vb18) to node [auto] {\scriptsize $\epsilon$} (vb21);

\draw[bend left = 12, ->] (vb8) to node [auto] {\scriptsize $\epsilon$} (vb17);
\draw[bend left = 12, ->] (vb17) to node [below] {\scriptsize $0.05$} (vb20);
\draw[bend left = 12, ->] (vb20) to node [auto] {\scriptsize $\epsilon$} (vb29);

\draw[bend left = 12, ->] (vb17) to node [left] {\scriptsize $0.9$} (vb19);
\draw[bend left = 12, ->] (vb19) to node [right] {\scriptsize $0.9$} (vb17);

\draw[bend left = 12, ->] (vb19) to node [right] {\scriptsize $0.1$} (vb20);

\end{tikzpicture}

		\caption{\label{fig:cellNetPQ} Structure of the matrices $P_\delta$ (left) and $Q_\epsilon$ (right) for a generic cell.}
	\end{figure}

%%%%%%%%%%%%%%%%%%%%%%%%%%%%%%%%
%%%%%%%%%%%%%%%%%%%%%%%%%%%%%%%%
%%%%%%%%%%%%%%%%%%%%%%%%%%%%%%%%
%%%%%%%%%%%%%%%%%%%%%%%%%%%%%%%%
\subsection*{Example 2: A Worst Case Scenario for Running Time}

In this example we create a family of networks parameterized by the number of nodes $n$. Each network has the property that it requires the maximum number of iterations in the overflow algorithm as specified in Theorem~\ref{th:ourAlgorithmWorks}, specifically in \eqref{eq:maxNumIterations}.

Fix $n$ and define a network of $n$ nodes as follows. The input vector $\alpha = \begin{bmatrix} 1 & 0 & \dotsc & 0 \end{bmatrix}$, while the capacity vector $\mu$ is specified by taking
\begin{align*}
	\mu_{i} =
	\begin{cases}
		1 + \frac{n - i}{n (n + 1)} & \text{ if } i < n,\\
		\frac{1}{2n} & \text{ if } i = n.
	\end{cases}
\end{align*}
We define the $n \times n$ routing matrix $P$ and the overflow matrix $Q$ via
\begin{align*}
	P =
	\begin{bmatrix}
		0 & 1 & 0 & & 0 & 0 \\
		0 & 0 & 1 & & 0 & 0 \\
		\vdots & & & \ddots & & \\
		0 & 0 & 0 & & 1 & 0 \\
		0 & 0 & 0 & & 0 & 1 \\
		0 & 0 & 0 & & 0 & 0 \\
	\end{bmatrix}
	\qquad \text{ and } \qquad
	Q =
	\begin{bmatrix}
		0 & 0 & & 0 & 0 & 0 \\
		q_{n} & 0 & & 0 & 0 & 0 \\
		0 & q_{n} & & & & \\
		\vdots & & \ddots & & & \\
		0 & 0 & & q_{n} & 0 & 0 \\
		0 & 0 & & 0 & q_{n} & 0 \\
	\end{bmatrix},
\end{align*}
%and the overflow matrix $Q$ is given by
%\begin{align*}
%	Q =
%	\begin{bmatrix}
%		0 & 0 & & 0 & 0 & 0 \\
%		q_{n} & 0 & & 0 & 0 & 0 \\
%		0 & q_{n} & & & & \\
%		\vdots & & \ddots & & & \\
%		0 & 0 & & q_{n} & 0 & 0 \\
%		0 & 0 & & 0 & q_{n} & 0 \\
%	\end{bmatrix}
%\end{align*}
where $q_{n} = 1 - 2^{-(n + 1)}$. By considering combinations of rows of $P$ and $Q$ as per \eqref{eq:overflowCondition},  it is easy to see that Condition~\ref{as:overflow} is satisfied.
It can be shown analytically that solving the overflow traffic equation~\eqref{eq:ourTrafficEquation} for this network $(\alpha,\mu, P, Q)$ requires the maximum number of iterations of Algorithm~\ref{alg:overflowAlgorithm}.

We also executed Algorithm~\ref{alg:overflowAlgorithm} for this family of networks with $n = 1, 2, \dotsc, 100$ and on each execution we verified that the number of iterations of the inner loop was exactly the maximum number of iterations given in \eqref{eq:maxNumIterations} and that $\lambda^\dagger$ solved~\eqref{eq:ourTrafficEquation}. A script for this execution is part of the GitHub repository \cite{GitHub:overflow-algorithm}.

%%%%%%%%%%%%%%%%%%%%%%%%%%%%%%%%
%%%%%%%%%%%%%%%%%%%%%%%%%%%%%%%%
%%%%%%%%%%%%%%%%%%%%%%%%%%%%%%%%
%%%%%%%%%%%%%%%%%%%%%%%%%%%%%%%%
\subsection*{Example 3: From \textsc{FD} to \textsc{NI}}

In this example and the next, we explore why we need Condition~\ref{as:filledordrained} and Condition~\ref{as:overflow}. We first consider a very simple network without overflows that does not satisfy the \textsc{fd} condition from \cite{goodman1984nej} but does satisfy the \textsc{ni} condition from Condition~\ref{as:filledordrained}.
%\begin{example}
%\label{ex:niExample}
	Consider the following network $(\alpha, \mu, P)$ with 4 nodes. The exogenous arrival vector is given by $\alpha = \begin{bmatrix} 1 & 0 & 0 & 0 \end{bmatrix}$ and the vector of service capacities is given by $\mu = \begin{bmatrix} 1 & 1 & 1 & 1 \end{bmatrix}$, while the routing matrix is defined via
	\begin{align*}
		P = \begin{bmatrix} 0 & 1 & 0 & 0 \\ 1 & 0 & 0 & 0 \\ 0 & 1/2 & 0 & 1/2 \\ 0 & 0 & 1 & 0 \end{bmatrix}.
	\end{align*}
	This matrix has two communicating classes $C_{1} = \lbrace 1, 2 \rbrace$ and $C_{2} = \lbrace 3, 4 \rbrace$. Only $C_{1}$ can be filled, while neither class can be externally drained. The network is therefore not \textsc{fd}. However, it is \textsc{ni}, since $C_{2}$ can be internally drained.
%\end{example}

%%%%%%%%%%%%%%%%%%%%%%%%%%%%%%%%
%%%%%%%%%%%%%%%%%%%%%%%%%%%%%%%%
%%%%%%%%%%%%%%%%%%%%%%%%%%%%%%%%
%%%%%%%%%%%%%%%%%%%%%%%%%%%%%%%%
\subsection*{Example 4: When Condition~\ref{as:overflow} Breaks}

This final example deals with an overflow network that does not satisfy Condition~\ref{as:overflow}. We demonstrate that in this case the overflow traffic equation~\eqref{eq:ourTrafficEquation} may admit no solution, a unique solution, or infinitely many solutions. This example shows in particular that the existence of a unique solution to \eqref{eq:ourTrafficEquation} depends crucially on the magnitude of the network parameters. This is in sharp contrast with the case without overflows, in which the existence of a unique solution to the traffic equation \eqref{eq:goodmanMasseyEq} is completely determined by the incidence vectors and matrices corresponding to the network parameters.

%\begin{example}
%\label{ex:cond2Broken}
	We consider a network $(\alpha, \mu, P, Q)$ with three nodes, specified as follows. The exogenous arrival vector is $\alpha = \begin{bmatrix} \alpha_{1} & 0 & 0 \end{bmatrix}$, where $\alpha_{1} > 0$. The vector of service capacities is $\mu = \begin{bmatrix} 4/3 & 2/3 & 1 \end{bmatrix}$. We define the matrices $P$ and $Q$ via
	\begin{align*}
		P = \begin{bmatrix} 0 & 1/2 & 0 \\ 1/2 & 0 & 0 \\ 1/2 & 1/2 & 0 \end{bmatrix} \quad \text{ and } \quad Q = \begin{bmatrix} 0 & 1/2 & 1/2 \\ 1/2 & 0 & 1/2 \\ 0 & 0 & 0 \end{bmatrix}.
	\end{align*}
	Since all nodes can be externally drained, the network $(\alpha, \mu, P)$ satisfies Condition~\ref{as:filledordrained}. We also have $\specrad{P} < 1$ and $\specrad{Q} < 1$. However, Condition~\ref{as:overflow} is never satisfied and the existence and uniqueness of a solution to the overflow equation \eqref{eq:ourTrafficEquation} for $(\alpha, \mu, P, Q)$ crucially depends on the value of $\alpha_{1}$. The underlying problem here is that the matrix	
	\begin{align}
		\label{eq:specrad1MatrixExample}
		P_{\lbrace 3 \rbrace} + Q_{\lbrace 1, 2 \rbrace} =
		\begin{bmatrix}
			0 & 1/2 & 1/2 \\
			1/2 & 0 & 1/2 \\
			1/2 & 1/2 & 0
		\end{bmatrix}
	\end{align}
	has spectral radius $1$.
	
	We now present the three different cases based on the value of $\alpha_{1}$ and indicate how $\alpha_{1}$ affects the existence and uniqueness of a solution to \eqref{eq:ourTrafficEquation}. For brevity, we focus on the results only and do not include formal proofs.
	
	If $\alpha_{1} < 1$, then the unique solution $\lambda^{*}$ to the traffic equation \eqref{eq:goodmanMasseyEq} is
	\begin{align*}
		\lambda^{*} = \alpha (I - P)^{-1} = \begin{bmatrix} 4 \alpha_{1} / 3 & 2 \alpha_{1} / 3 & 0 \end{bmatrix} < \mu,
	\end{align*}
	so $\lambda^{*}$ also solves the overflow traffic equation~\eqref{eq:ourTrafficEquation}. Condition~\ref{as:overflow} is not satisfied in this case due to \eqref{eq:specrad1MatrixExample}. Nevertheless, it can be verified that $\lambda^{*}$ is also the unique solution to \eqref{eq:ourTrafficEquation}. One way to do this is to manually check all $2^3$ linear equations \eqref{eq:innerLoopLinear} related to the network.
	
%DELETE	One way to do this is to consider all $8$ possible sets for $U^\dagger$ and to see that solutions to (6) resulting from $S=U^\dagger$ and $B = N \setminus U^\dagger$ are in disagreement with $U^\dagger$. \bluetext{Rephrase.}
		
	If $\alpha_{1} = 1$, then
	\begin{align*}
		\lambda^{*} = \alpha (I - P)^{-1} = \begin{bmatrix} 4/3 & 2/3 & 0 \end{bmatrix} \leq \begin{bmatrix} 4/3 & 2/3 & 1 \end{bmatrix}  = \mu
	\end{align*}
	is a solution to \eqref{eq:ourTrafficEquation}. As in the previous case, Condition~\ref{as:overflow} is not satisfied due to \eqref{eq:specrad1MatrixExample}. However, in the current case, there are infinitely many solutions to \eqref{eq:ourTrafficEquation}. Every vector of the form $\lambda = \lambda^{*} + \begin{bmatrix} \eps & \eps & \eps \end{bmatrix}$ with $\eps \in [0, 1]$ can be verified to solve \eqref{eq:ourTrafficEquation}.
%First one can see that \eqref{eq:alpha1SolDesc} describes a set of solutions by substituting into \eqref{eq:ourTrafficEquation}.
%Then, one way to argue that all solutions are of this form is again by looking at all $2^3$ linear equations \eqref{eq:innerLoopLinear} related to the network, and for each equation formulating constraints based on the non-linearities in \eqref{eq:ourTrafficEquation}. The resulting collection of linear programming problems can characterize \eqref{eq:alpha1SolDesc} as the collection of solutions.
	%\bluetext{Arguments.}
	
	Finally, if $\alpha_{1} > 1$, then there does not exist a solution to \eqref{eq:ourTrafficEquation}. One way to verify this is to check all $2^3$ linear equations \eqref{eq:innerLoopLinear} related to the network.

%%%%%%%%%%%%%%%%%%%%%%%%%%%%%%%%%%%%%%
%%%%%%%%%%%%%%%%%%%%%%%%%%%%%%%%%%%%%%
%%%%%%%%%%%%%%%%%%%%%%%%%%%%%%%%%%%%%%
%%%%%%%%%%%%%%%%%%%%%%%%%%%%%%%%%%%%%%
\section{Concluding remarks}
\label{sec:conc}

Motivated by queueing networks with finite buffers and overflows, we introduced the overflow traffic equation~\eqref{eq:ourTrafficEquation} as a natural generalization to the well-studied traffic equation~\eqref{eq:goodmanMasseyEq}. We developed a computationally efficient algorithm for solving the more general overflow equation. Such a solution indicates which buffers are overflowing and which are not and may then be used as part of a bottleneck analysis for the network. Our analysis showed that a characterization of existence and uniqueness of a solution requires care. In the process, we also refined understanding of equation~\eqref{eq:goodmanMasseyEq} by fully characterizing existence and uniqueness of nonnegative solutions via our \textsc{ni} condition.

%Equation~\eqref{eq:ourTrafficEquation} describes an idealized fluid version of networks with overflows
%and its (approximate) applicability to queueing networks with discrete jobs remains to be shown in a stochastic setting. In contrast to the non-overflow equations \eqref{eq:basicJackson} and \eqref{eq:goodmanMasseyEq}, our
It is important to note that our more general equation \eqref{eq:ourTrafficEquation} does not directly support an exact stationary distribution of a stochastic model as \eqref{eq:goodmanMasseyEq} does. %Further, without considering buffer scaling \eqref{eq:ourTrafficEquation}  exactly describe (fluid) flow rates in such a model.
This differs from say, stable Jackson networks, in which the stationary distribution of buffer $i$ is geometrically distributed with parameter $\rho_i=\lambda_i/\mu_i$, where $\lambda_i$ is determined by \eqref{eq:basicJackson} or \eqref{eq:goodmanMasseyEq}.  In such a stable Jackson network, the output rate of a stable node is exactly $\lambda_i$.

As an illustration of the impact of finite buffers, assume momentarily that a node $i$ is a standard M/M/1/K queue, as would be the case for an exponential service node receiving only exogenous Poisson arrivals at rate $\lambda_i=\alpha_i>0$. In this case, it is well-known from elementary queueing theory that
\begin{align*}
	\lim_{t \to \infty} \frac{L_i(t)}{t} = \lambda_i \frac{\rho_i^K - \rho_i^{K+1}}{1-\rho_i^{K+1}}
\end{align*}
with probability $1$, where $L_i(t)$ is the number of overflowing jobs from node $i$ (jobs arriving to node $i$ and finding the buffer of size $K$ full). For $\rho_i=1$ the limit is $\lambda_i/(K+1)$. This overflow rate, taken as a function of $\lambda_i$ only behaves as $\max(\lambda_i-\mu_i,0)$ for large $K$, yet the two terms are significantly different when $\lambda_i \approx \mu_i$ and $K$ is finite and not large. Nevertheless, for large buffered systems with small, fast jobs, it is plausible to use a fluid approximation yielding \eqref{eq:ourTrafficEquation}. We leave such an asymptotic analysis of discrete-job, stochastic networks for future research and in the current paper we derive results for traffic equation \eqref{eq:ourTrafficEquation}.

Towards that end, dynamics of such networks may also be characterized by equations similar to \eqref{eq:ourTrafficEquation} that describe general flows of the network in a transient context, a task that we have not undertaken as part of the current study.
Of similar interest, stemming from our results, is the relationship of our overflow algorithm to the general study of the Linear Complementarity Problem (LCP). We believe that a generalization of our algorithm may be framed in the context of LCP for solving a specific subclass of LCP problems that has still not been characterized. Undertaking such research remains a task for the future.

A related, more specific question deals with refining Condition~\ref{as:overflow} to strengthen Theorem~\ref{th:ourAlgorithmWorks} and to fully characterize existence and uniqueness of a nonnegative solution to \eqref{eq:ourTrafficEquation}. As we have shown in Example~4, there are cases in which Condition~\ref{as:overflow} is not satisfied, yet there exists a unique nonnegative solution to \eqref{eq:ourTrafficEquation}. Finding a natural characterization, similar to our \textsc{ni} condition for \eqref{eq:goodmanMasseyEq}, remains an open problem. Further, we conjecture that our overflow algorithm finds the unique solution to \eqref{eq:ourTrafficEquation} whenever such a solution exists. %However, we proved this so far only under Condition~\ref{as:overflow}.

\vspace{20pt}

\noindent {\bf Acknowledgments}:  HMJ and YN are supported by Australian Research Council (ARC) Discovery Project DP180101602.

%%%%\begin{enumerate}
%%%%\item Condition~2 is perhaps not the most general for existence and uniqueness (as already shown in Example~4 $\alpha_1 < 1$. Is there a general condition?
%%%%\item An open question: If there is a unique solution, does the algorithm always find it. We conjecture that yes.
%%%%\item Generalizations of equation (3) outside the queueing network arena: Min-linear equations. Can we find classes of these matrices for which such an algorithm works.
%%%%\item Does our algoirthm solve some class of LCP problems that has not been characterized in the LCP literature. The reason is that the GM algorithm is exactly the algoirthm for LCP Z-matrices... however we haven't seen such an algoirthm as ours.
%%%%%\item Related, we conjecture that $(I-Q')(I-P)^{-1}$ (+/- need to check) is a P-matrix when P is substochastic. If this the the case, it proves uniqueness and existance of solution of the shelf.
%%%%\item The dynamic version of these equations which actually describe a dynamic fluid network (oriignal version of paper had this)...
%%%%\item What stochastic models are described by equation (3) asymptotically? Large buffer (small job) lmits?
%%%%%\item Erjen's suggestions on a dual algoirthm.
%%%%\end{enumerate}

%%%%%%%%%%%%%%%%%%%%%%%%%%%%%%%%%%%%%%%%%%%%%%%%
%%%%%%%%%%%%%%%%%%%%%%%%%%%%%%%%%%%%%%%%%%%%%%%%
%%%%%%%%%%%%%%%%%%%%%%%%%%%%%%%%%%%%%%%%%%%%%%%%
%%%%%%%%%%%%%%%%%%%%%%%%%%%%%%%%%%%%%%%%%%%%%%%%
%%%%%%%%%%%%%%%%%%%%%%%%%%%%%%%%%%%%%%%%%%%%%%%%
%%%%%%%%%%%%%%%%%%%%%%%%%%%%%%%%%%%%%%%%%%%%%%%%
%%%%%%%%%%%%%%%%%%%%%%%%%%%%%%%%%%%%%%%%%%%%%%%%
\appendix
%%%%%%%%%%%%%%%%%%
%%%%%%%%%%%%%%%%%%
%%%%%%%%%%%%%%%%%%
%%%%%%%%%%%%%%%%%%
%%%%%%%%%%%%%%%%%%
%%%%%%%%%%%%%%%%%%
%%%%%%%%%%%%%%%%%%
\section{Auxiliary results for non-overflow networks}
\label{sec:technicalresults}

In this section, we prove a series of auxiliary results for networks $(\alpha, \mu, P)$ without overflows and the associated traffic equation \eqref{eq:goodmanMasseyEq}.

The first result is an immediate consequence of Tarski's fixed point theorem. It shows that there exists a nonnegative solution to \eqref{eq:goodmanMasseyEq} for every non-overflow network $(\alpha, \mu, P)$, regardless of any filled-or-drained conditions on $P$. A version of it also appeared in \cite{chen1991dfn}.

\begin{lemma}
	\label{lem:ourBoyTarski}
	Every network $(\alpha, \mu, P)$ admits a nonnegative solution to its traffic equation \eqref{eq:goodmanMasseyEq}.
\end{lemma}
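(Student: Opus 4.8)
The plan is to apply Tarski's fixed point theorem (the Knaster--Tarski theorem) on a suitable complete lattice. First I would fix a candidate upper bound for the solution. Since the map $\lambda \mapsto \alpha + (\lambda \wedge \mu) P$ has the property that the term $(\lambda \wedge \mu)$ is bounded above by $\mu$ componentwise, the image is always bounded above by the constant vector $\bar{\lambda} := \alpha + \mu P$, which is nonnegative. So I would work on the order interval $L := \{ \lambda \in \mathbb{R}^{n} \, : \, 0 \leq \lambda \leq \bar{\lambda} \}$, equipped with the componentwise partial order. This is a complete lattice: any subset has a componentwise supremum and infimum, which stay in $L$ because $L$ is a product of compact intervals.

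Next I would define the operator $\Phi : L \to \mathbb{R}^{n}$ by $\Phi(\lambda) = \alpha + (\lambda \wedge \mu) P$ and check two things: that $\Phi$ maps $L$ into $L$, and that $\Phi$ is order-preserving (monotone) on $L$. For the first, note $\Phi(\lambda) \geq \alpha \geq 0$ since $P$ is nonnegative and $(\lambda \wedge \mu) \geq 0$, and $\Phi(\lambda) \leq \alpha + \mu P = \bar{\lambda}$ since $(\lambda \wedge \mu) \leq \mu$ componentwise and $P$ is nonnegative; hence $\Phi(\lambda) \in L$. For monotonicity, if $\lambda \leq \lambda'$ componentwise then $\lambda \wedge \mu \leq \lambda' \wedge \mu$ componentwise (the map $t \mapsto t \wedge \mu_i$ is nondecreasing in each coordinate), and multiplying on the right by the nonnegative matrix $P$ preserves the inequality, so $\Phi(\lambda) \leq \Phi(\lambda')$. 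Then the Knaster--Tarski theorem guarantees that $\Phi$ has a fixed point in $L$, and that fixed point is a nonnegative solution of \eqref{eq:goodmanMasseyEq}.

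I do not expect any serious obstacle here; the only points requiring a modicum of care are verifying that $L$ is genuinely a complete lattice (it is, being a finite product of closed bounded real intervals) and that the componentwise minimum operation $\lambda \mapsto \lambda \wedge \mu$ is monotone, which is immediate. An alternative, if one prefers to avoid invoking Tarski, is a direct monotone-iteration argument: start from $\lambda^{(0)} = \bar{\lambda}$, set $\lambda^{(k+1)} = \Phi(\lambda^{(k)})$, observe $\lambda^{(1)} \leq \lambda^{(0)}$ (since $\Phi$ maps into $L$) and then by monotonicity the sequence is nonincreasing and bounded below by $0$, hence converges; continuity of $\Phi$ (it is piecewise affine, in particular continuous) gives that the limit is a fixed point. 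Either route yields the claim.
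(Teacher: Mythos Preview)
Your proposal is correct and follows essentially the same approach as the paper: both apply Tarski's fixed point theorem to the monotone map $\lambda \mapsto \alpha + (\lambda \wedge \mu) P$ on a bounded order interval (the paper uses the cube $[0,c]^n$ with $c$ the largest entry of $\alpha + \mu P$, while you use the slightly tighter box $[0,\bar{\lambda}]$). Your write-up is in fact more detailed than the paper's, and the alternative monotone-iteration argument you sketch is a valid self-contained substitute.
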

\begin{proof}
	Consider the function $f (x) = \alpha + (x \wedge \mu) P$, which maps the complete lattice $[0, c]^{n}$ into itself if $c$ equals the largest entry of $\alpha + \mu P$. The partial order on this lattice is the natural order via entry-wise comparison and $f$ and preserves this order since $P$ is a non-negative matrix. Tarski's fixed point theorem \cite{tarski1955lattice} then implies that $f$ has at least one fixed point in $[0, c]^{n}$. This fixed point necessarily satisfies \eqref{eq:goodmanMasseyEq}.
\end{proof}

We next prove an elementary but useful result, which is utilized in the proof of Theorem \ref{th:GMalgorithmworks}.

\begin{lemma}
	\label{lem:triangularblockspecrad}
	Let $D_{1} , \dotsc , D_{l}$ be pairwise disjoint sets of nodes and denote their union by $D$. If each set $D_{k}$ contains nodes of exactly one class and the matrix $P_{D_{k} D_{k}}$ satisfies $\sigma (P_{D_{k} D_{k}}) < 1$, then both $\sigma (P_{D D}) < 1$ and $\sigma (P_{D}) < 1$.
\end{lemma}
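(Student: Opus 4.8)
The plan is to exploit the block-triangular structure that arises when we order the nodes according to the partial order on communicating classes of $P$. Since each $D_k$ sits inside exactly one communicating class, I would first fix a linear extension of the class order: enumerate the classes $C_{(1)}, \dotsc, C_{(m)}$ so that $p_{ij} = 0$ whenever $i \in C_{(r)}$, $j \in C_{(s)}$ and $r > s$ is impossible — that is, arrange classes so that the only inter-class transitions go ``forward.'' Then, re-indexing the $D_k$'s consistently with this ordering, the matrix $[P]_{DD}$ (the matrix obtained by erasing rows and columns outside $D$) becomes block upper-triangular, with the diagonal blocks being exactly the matrices $[P]_{D_k D_k}$, possibly after a further split of any $D_k$ that straddles — but it does not straddle, since each $D_k$ lies in a single class, so each $D_k$ is a single diagonal block.

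The key step is then the standard fact that the spectrum of a block-triangular matrix is the union of the spectra of its diagonal blocks, hence $\sigma(P_{DD}) = \sigma([P]_{DD}) = \bigcup_k \sigma([P]_{D_k D_k}) = \bigcup_k \sigma(P_{D_k D_k})$. (Here I am using that $P_{DD}$, the $n \times n$ matrix with the non-$D$ rows and columns zeroed out, has the same nonzero eigenvalues as the compressed matrix $[P]_{DD}$, together with extra zero eigenvalues; zero eigenvalues do not affect whether the spectral radius is below $1$.) By hypothesis each $\sigma(P_{D_k D_k}) < 1$, so the union also has all its elements of modulus strictly less than $1$, giving $\sigma(P_{DD}) < 1$.

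For the second claim, $\sigma(P_D) < 1$, I would note that $P_D = P_{DN}$ keeps all rows $i \in D$ intact (including the columns leading out of $D$) but zeroes the rows $i \notin D$. Reordering nodes so that $D$ comes first, $P_D$ has the block form $\begin{bmatrix} [P]_{DD} & [P]_{D(N\setminus D)} \\ 0 & 0 \end{bmatrix}$, which is again block upper-triangular with diagonal blocks $[P]_{DD}$ and the zero matrix. Hence $\sigma(P_D) = \sigma([P]_{DD}) \cup \{0\} = \sigma(P_{DD}) \cup \{0\}$, and the first part gives $\sigma(P_D) < 1$.

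The only mild obstacle is making the block-triangular reduction fully rigorous: one must be careful that a permutation similarity does not change the spectrum (it doesn't), and that passing between the padded matrix $P_{DD}$ and the compressed matrix $[P]_{DD}$ only introduces spurious zero eigenvalues. I expect the class-ordering argument to be routine given the paper's conventions, and the spectrum-of-triangular-blocks fact is elementary linear algebra; so the main care is bookkeeping rather than a genuine difficulty.
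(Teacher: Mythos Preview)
Your argument is correct and shares the paper's key structural insight: after ordering the communicating classes, $[P]_{DD}$ is block upper-triangular with diagonal blocks $[P]_{D_k D_k}$. From there the two proofs differ only in the finishing tool. You invoke directly that the spectrum of a block-triangular matrix is the union of the spectra of its diagonal blocks, and then handle $P_D$ via the $2 \times 2$ block form $\left[\begin{smallmatrix} [P]_{DD} & * \\ 0 & 0 \end{smallmatrix}\right]$. The paper instead argues through powers: since each $P_{D_k D_k}^t \to 0$ one gets $P_{DD}^t \to 0$, and then the identity $P_D^{\,t+1} = P_{DD}^{\,t} P_D$ yields $P_D^{\,t} \to 0$. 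Both routes are short and standard; your eigenvalue route is arguably a touch cleaner, since it sidesteps having to track the off-diagonal blocks of $P_{DD}^{\,t}$ separately.
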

\begin{proof}
	Since the sets of nodes $D_{1} , \dotsc , D_{l}$ are disjoint and each set contains nodes of exactly one class, we know that $P_{DD}$ is a block matrix with the blocks given by the matrices $P_{D_{k} D_{k}}$. Then the matrix $P_{DD}^{t}$ is the block matrix with blocks $P_{D_{k} D_{k}}^{t}$ for $t \in \mathbb{N}$. We also know that $P_{D_{k} D_{k}}^{t}$ converges to the zero matrix as $t \to \infty$, because $\sigma (P_{D_{k} D_{k}}) < 1$. We conclude that $P_{DD}^{t}$ converges to the zero matrix as well, so $\sigma (P_{D D}) < 1$. To establish the last statement, we observe that the block structure gives rise to the equality $P_{D}^{t + 1} = P_{DD}^{t} P_{D}$.
	%\bluenote{Please check this one carefully.}
	Convergence of $P_{D}^{t}$ to the zero matrix follows immediately from this observation, so $\sigma (P_{D}) < 1$.
\end{proof}

We proceed by investigating existence and uniqueness of a nonnegative solution to the traffic equation~\eqref{eq:goodmanMasseyEq} under the \textsc{ni} condition. In the upcoming proofs, we use the following terminology. We say that a class $C_{1}$ has one-step access to another class $C_{2}$ if $p_{ij} > 0$ for some $i \in C_{1}$ and $j \in C_{2}$. We say that a class $C$ can access another class $D$ if there exist classes $C_{1} , \dotsc , C_{l}$ such that $C = C_{0}$, $D = C_{l}$, and $C_{k-1}$ has one-step access to $C_{k}$ for $k = 1 , \dotsc , l$. We say that a class $C$ is of level $h \in \mathbb{Z}_{\geqslant 0}$ if it can be accessed by exactly $h$ other classes.

A first observation is that there always exists at least one class of level $0$. Indeed, for fixed $h \in \mathbb{Z}_{\geqslant 0}$, a class of level $h+1$ cannot be accessed by a class of level $g > h$. Consequently, a class of level $h + 1$ has to be accessed by at least one class of level $g \leq h$, so there must be at least one class of level $0$.

\begin{lemma}
	\label{lem:nonnegsolutionforstochmat}
	Suppose that the network $(\alpha, \mu, P)$ is \textsc{ni} and that $P$ is an irreducible matrix. If $\lambda$ solves the traffic equation \eqref{eq:goodmanMasseyEq}, then $\lambda$ is nonnegative.
\end{lemma}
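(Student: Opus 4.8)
The plan is to argue by contradiction. Suppose $\lambda$ solves \eqref{eq:goodmanMasseyEq} but the set $G = \lbrace i \in N : \lambda_i < 0 \rbrace$ is nonempty. Writing \eqref{eq:goodmanMasseyEq} coordinate-wise as $\lambda_i = \alpha_i + \sum_{j \in N} (\lambda_j \wedge \mu_j) p_{ji}$, summing over $i \in G$, and splitting the inner sum according to whether $j \in G$, one obtains
\[
\sum_{i \in G} \lambda_i = \sum_{i \in G} \alpha_i + \sum_{j \in G} \lambda_j \Big( \sum_{i \in G} p_{ji} \Big) + \sum_{j \in N \setminus G} (\lambda_j \wedge \mu_j) \Big( \sum_{i \in G} p_{ji} \Big),
\]
where we used $\lambda_j \wedge \mu_j = \lambda_j$ for $j \in G$, which holds because $\mu_j > 0$. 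This is in the spirit of the summation argument in the uniqueness part of the proof of Theorem~\ref{th:GMalgorithmworks}.

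Next I would extract tightness from three sign observations: $\sum_{i \in G} \alpha_i \ge 0$; the last sum is $\ge 0$ since $\lambda_j \wedge \mu_j \ge 0$ for $j \notin G$; and for each $j \in G$ we have $\lambda_j < 0$ and $\sum_{i \in G} p_{ji} \in [0, 1]$, so $\lambda_j \big( \sum_{i \in G} p_{ji} \big) \ge \lambda_j$. Substituting these bounds yields $\sum_{i \in G} \lambda_i \ge \sum_{j \in G} \lambda_j$, which is in fact an equality, so each of the bounds above is attained. Tightness forces $\alpha_i = 0$ for all $i \in G$ and, since $\lambda_j < 0$, also $\sum_{i \in G} p_{ji} = 1$ for every $j \in G$. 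Combined with substochasticity of $P$, the latter gives $p_{ji} = 0$ whenever $j \in G$ and $i \notin G$ (so $G$ is closed under $P$), and moreover $\sum_{i \in N} p_{ji} = 1$ for $j \in G$ (so no node of $G$ drains externally).

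The conclusion then follows from irreducibility of $P$. If $G \subsetneq N$, closedness of $G$ gives $p_{ji}^{k} = 0$ for all $j \in G$, $i \in N \setminus G$, $k \ge 1$, contradicting irreducibility. Hence $G = N$, so $\alpha = 0$ and $P$ is stochastic; but then the unique communicating class $N$ can neither be filled (as $\alpha = 0$) nor externally drained (all row sums of $P$ equal $1$) nor internally drained (there is no other class), i.e.\ $N$ is isolated, contradicting the \textsc{ni} hypothesis. Therefore $G = \emptyset$ and $\lambda \ge 0$.

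The main obstacle is the bookkeeping in the summation step: because $\lambda_j < 0$ on $G$, multiplying by $\sum_{i \in G} p_{ji} \le 1$ reverses the inequality, so one must track carefully which quantities are bounded from above and which from below in order for the chain to collapse to an equality. Once tightness is in hand, the reduction to ``$G$ is a closed, unfillable, externally non-draining set'' and the clash with irreducibility and the \textsc{ni} condition is immediate.
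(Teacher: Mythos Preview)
Your proof is correct and follows essentially the same approach as the paper's: both define the set of negative coordinates, sum the traffic equation over that set, use the three sign observations to squeeze the inequality into an equality, and then read off from tightness that the negative set is closed with stochastic rows, forcing it (by irreducibility) to be all of $N$, which contradicts the \textsc{ni} hypothesis. The only cosmetic difference is in the final step: the paper phrases the contradiction as ``$N$ can be filled, so $0 < \sum_i \alpha_i = 0$'', whereas you phrase it as ``$N$ is isolated''; these are the same observation.
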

\begin{proof}
	Assume that $\lambda = \alpha + (\lambda \wedge \mu) P$ and take any empty sum to be $0$. Denoting $D^{-} = \lbrace i \in N \, \vert \, \lambda_{i} < 0 \rbrace$ and $D^{+} = N \setminus D^{-}$, we get
	\begin{align*}
		0
		&\leq \sum_{i \in D^{-}} \alpha_{i} \\
		&= \sum_{i \in D^{-}} \lambda_{i} - \sum_{i \in D^{-}} \sum_{j \in N} (\lambda_{j} \wedge \mu_{j}) p_{ji} \\
		%&= \sum_{i \in D^{-}} \lambda_{i} - \sum_{j \in N} (\lambda_{j} \wedge \mu_{j}) \sum_{i \in D^{-}} p_{ji} \\
		%&= \sum_{i \in D^{-}} \lambda_{i} - \sum_{j \in D^{-}} (\lambda_{j} \wedge \mu_{j}) \sum_{i \in D^{-}} p_{ji} - \sum_{j \in D^{+}} (\lambda_{j} \wedge \mu_{j}) \sum_{i \in D^{-}} p_{ji} \\
		&= \sum_{i \in D^{-}} \lambda_{i} - \sum_{j \in D^{-}} \lambda_{j} \sum_{i \in D^{-}} p_{ji} - \sum_{j \in D^{+}} (\lambda_{j} \wedge \mu_{j}) \sum_{i \in D^{-}} p_{ji} \\
		&\leq \sum_{i \in D^{-}} \lambda_{i} - \sum_{j \in D^{-}} \lambda_{j} \sum_{i \in D^{-}} p_{ji} \\
		&\leq 0.
	\end{align*}
	The penultimate inequality follows from the definition of $D^{+}$, while the last inequality follows from the definition of $D^{-}$.
	
	Consequently, we obtain the equality $\sum_{i \in D^{-}} \lambda_{i} - \sum_{j \in D^{-}} \lambda_{j} \sum_{i \in D^{-}} p_{ji} = 0$. This equality holds trivially if $D^{-}$ is empty. However, if $ D^{-}$ is nonempty, then the equality holds if and only if  $\sum_{i \in D^{-}} p_{ji} = 1$ for all $j \in D^{-}$. This last condition can only be true if $D^{-} = N$, because $P$ is irreducible. Thus, if $\lambda$ is not nonnegative, then it is negative and $P$ must be stochastic.
	
	Now assume that $\lambda$ is negative and that $P$ is stochastic. Then the network forms a class that cannot be drained, so by the \textsc{ni} condition it can be filled and thus $\sum_{i \in N} \alpha_{i} > 0$. In this case, we get
	\begin{align*}
		0 < \sum_{i \in N} \alpha_{i} = \sum_{i \in N} \lambda_{i} - \sum_{i \in N} \sum_{j \in N} \lambda_{i} p_{ij} = 0,
	\end{align*}
	which is a contradiction. We conclude that $\lambda$ must be nonnegative.
\end{proof}

%\begin{lemma}
%	\label{lem:stochmatstrictpos}
%	Suppose that the network $(\alpha, \mu, P)$ is \textsc{ni} and that $P$ is an irreducible matrix. Let $\lambda$ be a solution to the traffic equation. If none of the nodes can be filled, then $\lambda = 0$. If there exists a node that can be filled, then $\lambda > 0$.
%\end{lemma}
%\begin{proof}
%	From Lemma \ref{lem:nonnegsolutionforstochmat} we know that $\lambda$ is nonnegative. Consider the case in which none of the nodes can be filled. Then $\alpha = 0$ and the network can be drained, so $P$ is not stochastic. Moreover, we get
%	\begin{align*}
%		\lambda = (\lambda \wedge \mu) P \leq \lambda P \leq \lambda P^{k}
%	\end{align*}
%	for $k \in \mathbb{N}$. Because $P$ is strictly substochastic and irreducible, the matrix $P^{k}$ converges to zero and thus $\lambda = 0$.
%	
%	Consider the case in which at least one node can be filled, so $\alpha_{i} > 0$ for at least one node $i$. An iteration of the traffic equation gives us
%	\begin{align*}
%		\lambda = \alpha + ( (\alpha + (\lambda \wedge \mu) P) \wedge \mu) P.
%	\end{align*}
%	Consequently, $\alpha_{i} > 0$ implies that $\lambda_{i} > 0$. Additionally, $\lambda_{i} > 0$ and $p_{ij} > 0$ implies that $\lambda_{j} > 0$. Because $P$ is irreducible, we conclude that $\lambda > 0$ in this case.
%\end{proof}

\begin{lemma}
	\label{lem:fdnonnegsolposfil}
	Suppose that the network $(\alpha, \mu, P)$ is \textsc{ni}. If $\lambda$ solves the traffic equation \eqref{eq:goodmanMasseyEq}, then $\lambda$ is nonnegative. If $C$ is a class that can be filled, then $[\lambda]_{C} > 0$. If $C$ is a class that cannot be filled, then $[\lambda]_{C} = 0$.
\end{lemma}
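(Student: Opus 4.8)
The plan is to prove all three claims simultaneously by induction on the \emph{level} $h$ of the communicating classes of $P$, as defined just before the lemma. When we treat the classes of level $h$, every class that accesses one of them has strictly smaller level and has therefore already been handled, while classes of the same level do not access one another; so we may fix a single class $C$ of level $h$ and assume, as induction hypothesis, that for every class $C'$ of level $< h$ we already know $[\lambda]_{C'} \ge 0$, with $[\lambda]_{C'} > 0$ if $C'$ can be filled and $[\lambda]_{C'} = 0$ otherwise. The base case $h = 0$ is the special case of the argument below in which there are no predecessor classes.

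Restricting \eqref{eq:goodmanMasseyEq} to $C$ gives
\[
[\lambda]_{C} = [\alpha']_{C} + ([\lambda]_{C} \wedge [\mu]_{C}) [P]_{CC}, \qquad [\alpha']_{C} := [\alpha]_{C} + \sum_{C'} ([\lambda]_{C'} \wedge [\mu]_{C'}) [P]_{C'C},
\]
where the sum runs over the classes $C'$ having one-step access to $C$ (all of level $< h$), so that $[\alpha']_{C} \ge [\alpha]_{C} \ge 0$ by the induction hypothesis, and $[P]_{CC}$ is irreducible. The first step is to verify that the auxiliary one-class network $([\alpha']_{C}, [\mu]_{C}, [P]_{CC})$ is \textsc{ni}, so that Lemma~\ref{lem:nonnegsolutionforstochmat} applies and yields $[\lambda]_{C} \ge 0$. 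I would split on whether $[P]_{CC}$ is stochastic: if it is not, some node of $C$ has row sum below $1$, so the auxiliary class can be externally drained; if it is stochastic, then in the original network $C$ has no outgoing edges and no external leak, hence cannot be drained at all, so the \textsc{ni} hypothesis forces $C$ to be fillable, and then $[\alpha']_{C} \ne 0$ (by the equivalence below) makes the auxiliary class fillable. Either way the auxiliary network contains no isolated class.

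The crux is the equivalence: $[\alpha']_{C} \ne 0$ if and only if $C$ can be filled in the original network. If $[\alpha']_{C} \ne 0$, then either $[\alpha]_{C} \ne 0$ (so $C$ is fillable by definition) or some term $([\lambda]_{C'} \wedge [\mu]_{C'})[P]_{C'C}$ is nonzero; in the latter case $[\lambda]_{C'} \ne 0$, hence $[\lambda]_{C'} > 0$ by the induction hypothesis, so $C'$ is fillable and has one-step access to $C$, making $C$ fillable. Conversely, if $C$ is fillable, then either $\alpha_i > 0$ for some $i \in C$, giving $[\alpha]_{C} \ne 0$, or positive exogenous input reaches $C$ along a chain of distinct classes whose last link is a one-step predecessor $C'$ of $C$ that is itself fillable; the induction hypothesis then gives $[\lambda]_{C'} > 0$, hence $[\lambda]_{C'} \wedge [\mu]_{C'} > 0$ (here the standing assumption $\mu > 0$ is used), and since $[P]_{C'C} \ne 0$ this $C'$ contributes a nonzero term to $[\alpha']_{C}$.

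With $[\lambda]_{C} \ge 0$ in hand, the two refined claims follow by the dichotomy. If $C$ is fillable, then $[\alpha']_{C} \ne 0$, and I would show that the set $Z = \{ k \in C : [\lambda]_{C,k} = 0 \}$ is closed under taking $[P]_{CC}$-predecessors, since $[\lambda]_{C,k} = 0$ forces $[\alpha']_{C,k} = 0$ and $[\lambda]_{C,\ell} \wedge [\mu]_{\ell} = 0$ (hence $[\lambda]_{C,\ell} = 0$, using $\mu_\ell > 0$) for every $\ell$ with $p_{\ell k} > 0$; by irreducibility $Z$ is then $\emptyset$ or all of $C$, and it cannot be all of $C$ because $[\lambda]_{C} \ge [\alpha']_{C} \ne 0$, so $[\lambda]_{C} > 0$. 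If $C$ is not fillable, then $[\alpha']_{C} = 0$, and $[P]_{CC}$ cannot be stochastic (a stochastic $[P]_{CC}$ together with non-fillability would make $C$ isolated, contradicting \textsc{ni}); being irreducible and substochastic but not stochastic, $[P]_{CC}$ has spectral radius below $1$, so iterating $[\lambda]_{C} = ([\lambda]_{C} \wedge [\mu]_{C})[P]_{CC} \le [\lambda]_{C} [P]_{CC} \le [\lambda]_{C} [P]_{CC}^{t} \to 0$ forces $[\lambda]_{C} = 0$. I expect the equivalence of the third paragraph to be the main obstacle: it is where the whole induction is wired together, and one must take care that the strict-positivity half of the induction hypothesis is invoked only for classes of strictly smaller level, so that the \textsc{ni}-check for the stochastic sub-case does not become circular.
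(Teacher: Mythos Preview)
Your proof is correct and follows essentially the same route as the paper: induction on the level of communicating classes, restricting the traffic equation to a single class $C$ to obtain a one-class \textsc{ni} network, invoking Lemma~\ref{lem:nonnegsolutionforstochmat} for nonnegativity, and then splitting on fillability to get $[\lambda]_C>0$ or $[\lambda]_C=0$. The only organizational differences are that the paper treats the irreducible case first as a preamble before starting the level induction, and proves strict positivity by forward propagation (iterating the equation to show $\lambda_i>0$ and $p_{ij}>0$ force $\lambda_j>0$) rather than your equivalent backward ``zero-set closed under predecessors'' argument; your explicit two-sided equivalence $[\alpha']_C\ne 0 \Leftrightarrow C$ fillable is a point the paper leaves implicit.
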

\begin{proof}
	We first assume that $P$ is irreducible, so there is exactly one class. From Lemma \ref{lem:nonnegsolutionforstochmat} we know that $\lambda$ is nonnegative. Consider the case in which none of the nodes can be filled. Then $\alpha = 0$ and the network can be drained, so $P$ is not stochastic. Moreover, we get
	\begin{align*}
	\lambda = (\lambda \wedge \mu) P \leq \lambda P \leq \lambda P^{k}
	\end{align*}
	for $k \in \mathbb{N}$. Because $P$ is strictly substochastic and irreducible, the matrix $P^{k}$ converges to zero and thus $\lambda = 0$.
	
	Consider the case in which at least one node can be filled, so $\alpha_{i} > 0$ for at least one node $i$. An iteration of the traffic equation gives us
	\begin{align*}
	\lambda = \alpha + ( (\alpha + (\lambda \wedge \mu) P) \wedge \mu) P.
	\end{align*}
	Consequently, $\alpha_{i} > 0$ implies that $\lambda_{i} > 0$. Additionally, $\lambda_{i} > 0$ and $p_{ij} > 0$ implies that $\lambda_{j} > 0$. Because $P$ is irreducible, we conclude that $\lambda > 0$ in this case.
	
	We now drop the assumption that $P$ is irreducible and use an induction argument to prove the lemma. As a first step, consider a class $C$ of level $0$. Then
	\begin{align*}
	[\lambda]_{C} = [\alpha]_{C} + ([\lambda]_{C} \wedge [\mu]_{C}) [P]_{CC}
	\end{align*}
	and the network $([\alpha]_{C}, [\mu]_{C}, [P]_{CC})$ satisfies the \textsc{ni} condition. Indeed, if $C$ can be drained, then the \textsc{ni} condition is trivially satisfied. If $C$ cannot be drained, it can be filled and the \textsc{ni} condition is also satisfied. Because $C$ is a communicating class, we also know that $[P]_{CC}$ is irreducible. The previous arguments show that $[\lambda]_{C}$ is nonnegative in this case. Additionally, $[\lambda]_{C} > 0$ if $C$ can be filled and $[\lambda]_{C} = 0$ if $C$ cannot be filled.
	
	For the induction step, we assume that the following holds. For fixed $h \in \mathbb{N}$, every class $D$ of level $g < h$ satisfies $[\lambda]_{D} > 0$ if $D$ can be filled and $[\lambda]_{D} = 0$ if $D$ cannot be filled. Now consider a class $C$ of level $h$. This class can only be accessed by classes that have a level $g < h$. Let $D_{1} , \dotsc , D_{l}$ be the different classes that have one-step access to $C$. Then we can write $[\lambda]_{C}$ as
	\begin{align*}
	[\lambda]_{C} = \beta + ([\lambda]_{C} \wedge [\mu]_{C}) [P]_{CC},
	\end{align*}
	with
	\begin{align*}
	\beta = [\alpha]_{C} + \sum_{k=1}^{l} ([\lambda]_{D_{k}} \wedge [\mu]_{D_{k}}) [P]_{D_{k} C}.
	\end{align*}
	Suppose $C$ can be filled. Then there are two (possibly overlapping) cases. In the first case, $[\alpha]_{C} \not= 0$. In the second case, one of the classes $D_{1} , \dotsc , D_{l}$ can be filled and thus $[\lambda]_{D_{k}} > 0$ for some $k$. As a result, $\beta \not= 0$ in both cases, so the network $(\beta, [\mu]_{C},[P]_{CC})$ is \textsc{ni}.
	Suppose $C$ cannot be filled. Then $C$ can be drained and thus the network $(\beta, [\mu]_{C},[P]_{CC})$ is also \textsc{ni}. As before, we know that $[P]_{CC}$ is irreducible and we conclude that $[\lambda]_{C}$ is nonnegative. Additionally, $[\lambda]_{C} > 0$ if $C$ can be filled and $[\lambda]_{C} = 0$ if $C$ cannot be filled. This completes the induction step.
	Because we have verified the assumption of the induction step for the classes of level $0$, the statement of the lemma follows immediately.
\end{proof}

\begin{lemma}
	\label{lem:positivekappa}
	Suppose that the network $(\alpha, \mu, P)$ is \textsc{ni} and let $\gamma$ be a nonnegative vector satisfying
	\begin{align*}
		\gamma \geq \alpha + (\gamma \wedge \mu) P.
	\end{align*}
	If $C$ is a class that can be filled, then $[\gamma]_{C} > 0$. If $C$ can be filled but not drained, then $\gamma_{i} > \mu_{i}$ for some~$i \in C$. %If $D = \lbrace i \, \vert \, \gamma_{i} \leq \mu_{i} \rbrace$, then $\sigma (P_{DD}) < 1$ and $\sigma (P_{D}) < 1$.
\end{lemma}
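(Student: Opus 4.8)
The plan is to mimic the inductive structure used in the proof of Lemma~\ref{lem:fdnonnegsolposfil}, organizing the argument by the level of a communicating class. First I would dispose of the irreducible case, i.e.\ a single communicating class~$C$. Since $\gamma$ is nonnegative and satisfies $\gamma \geq \alpha + (\gamma \wedge \mu) P$, one iteration gives $\gamma \geq \alpha + ((\alpha + (\gamma \wedge \mu) P) \wedge \mu) P$, so that $\alpha_i > 0$ forces $\gamma_i > 0$, and $\gamma_i > 0$ together with $p_{ij} > 0$ forces $\gamma_j > 0$; irreducibility then propagates positivity, so $[\gamma]_C > 0$ whenever $C$ can be filled. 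For the second assertion in the irreducible case, suppose toward a contradiction that $\gamma_i \leq \mu_i$ for all $i \in C$, so $\gamma \wedge \mu = \gamma$ and the inequality reads $\gamma \geq \alpha + \gamma P$, i.e.\ $\gamma(I - P) \geq \alpha \geq 0$ with $\alpha \neq 0$ since $C$ can be filled. If $C$ cannot be drained then $P$ is stochastic and irreducible, so $\gamma(I - P) \cdot 1 = 0$; but summing $\gamma(I-P) \geq \alpha$ over all coordinates gives $0 \geq |\alpha| > 0$, a contradiction. Hence some $\gamma_i > \mu_i$.

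Next I would handle the general (reducible) case by induction on the level~$h$ of the class. The level-$0$ base case is exactly the irreducible analysis applied to the network $([\alpha]_C, [\mu]_C, [P]_{CC})$, which is \textsc{ni} by the same reasoning as in Lemma~\ref{lem:fdnonnegsolposfil} (a level-$0$ class is either drainable, or not drainable and hence fillable). For the induction step, let $C$ be of level~$h$ and let $D_1, \dots, D_l$ be the classes with one-step access to $C$; each $D_k$ has level $< h$. Restricting the inequality to $C$, I would write
\[
[\gamma]_C \geq \beta + ([\gamma]_C \wedge [\mu]_C)[P]_{CC},
\qquad
\beta = [\alpha]_C + \sum_{k=1}^{l} ([\gamma]_{D_k} \wedge [\mu]_{D_k})[P]_{D_k C} \geq [\alpha]_C.
\]
If $C$ can be filled, then either $[\alpha]_C \neq 0$, or some $D_k$ can be filled, in which case the induction hypothesis gives $[\gamma]_{D_k} > 0$ and hence $([\gamma]_{D_k} \wedge [\mu]_{D_k})[P]_{D_k C} \neq 0$ (using $[P]_{D_k C} \neq 0$ by one-step access and $[\mu]_{D_k} > 0$); either way $\beta \neq 0$, so $(\beta, [\mu]_C, [P]_{CC})$ is \textsc{ni} with irreducible $[P]_{CC}$, and the irreducible case yields $[\gamma]_C > 0$. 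If moreover $C$ cannot be drained, the same irreducible argument (now with $\beta$ in place of $\alpha$) produces an $i \in C$ with $\gamma_i > \mu_i$.

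The main obstacle I anticipate is being careful about the inequality direction: unlike in Lemma~\ref{lem:fdnonnegsolposfil}, here we only have $\gamma \geq \alpha + (\gamma \wedge \mu)P$ rather than equality, so the "cannot be drained'' subcase of the second assertion must be argued via the $\gamma(I-P) \geq \alpha$ comparison together with stochasticity of $P_{CC}$, and one must verify that restricting the vector inequality to a class $C$ genuinely produces the stated inequality with the extra inflow $\beta$ (the inflow from $D_k$ enters with a $\geq$, which is consistent since those terms are nonnegative). A secondary point to check is that $\beta \neq 0$ really does make the restricted network \textsc{ni}: this needs the observation that $[P]_{D_k C} \neq 0$ has a strictly positive entry landing in $C$, so that a filled upstream class contributes a strictly positive amount of inflow somewhere in $C$. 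Everything else is a routine adaptation of the level-induction machinery already established in the appendix.
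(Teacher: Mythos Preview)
Your proposal is correct and follows essentially the same approach as the paper. The paper is terser: it defers the first assertion entirely to ``the proofs of Lemma~\ref{lem:fdnonnegsolposfil}'' and, for the second assertion, argues directly via the norm identity $|xP|=|x|$ for stochastic $P$ to obtain $|[\gamma]_C| > |[\gamma]_C \wedge [\mu]_C|$, rather than assuming $\gamma \le \mu$ on $C$ for a contradiction; but your contradiction version and the paper's direct version are trivially equivalent, and your explicit level-induction is exactly what the paper's reference to Lemma~\ref{lem:fdnonnegsolposfil} unpacks to.
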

\begin{proof}
	Recall that $\abs{x} = \sum_{i=1}^{n} x_{i}$ for an $n$-dimensional nonnegative vector $x$. It is easy to see that $\abs{x} = \abs{x P}$ if $P$ is stochastic and that $\abs{x} > \abs{x P}$ if $P$ is not stochastic.
	
	The first statement follows from the proofs of Lemma \ref{lem:fdnonnegsolposfil}. Suppose that the class $C$ can be filled but not drained and let $A = N \setminus C$. Then
	\begin{align*}
		[\gamma]_{C} \geq \beta + ([\gamma]_{C} \wedge [\mu]_{C}) [P]_{CC}
	\end{align*}
	with
	\begin{align*}
		\beta = [\alpha]_{C} + ([\gamma]_{A} \wedge [\mu]_{A}) [P]_{AC}.
	\end{align*}
	Because $C$ can be filled, we know that $\beta \not= 0$. We also know that the matrix $[P]_{CC}$ is irreducible and stochastic, as $C$ cannot be drained. This implies that
	\begin{align*}
		\abs{[\gamma]_{C}}
		&\geq \abs{\beta} + \abs{([\gamma]_{C} \wedge [\mu]_{C}) [P]_{CC}} \\
		&> \abs{([\gamma]_{C} \wedge [\mu]_{C}) [P]_{CC}} \\
		&\geq \abs{[\gamma]_{C} \wedge [\mu]_{C}},
	\end{align*}
	so $\gamma_{i} > \mu_{i}$ for some $i \in C$.
	%
	%Note that $D = \lbrace i \, \vert \, \gamma_{i} \leq \mu_{i} \rbrace$ is the disjoint union of sets $D_{k}$ containing nodes of exactly one class.
\end{proof}

%We use the previous results to derive properties of networks with overflows. Consider a network $(\alpha, \mu , P , Q)$ and its associated overflow traffic equation \eqref{eq:ourTrafficEquation}. We argued before that we have to put more stringent assumptions on $(\alpha, \mu , P , Q)$ to guarantee existence and uniqueness of a solution to the overflow equation. Here, we show that Condition~\ref{as:filledordrained} and Condition \ref{as:overflow} imply that there exists at most one solution to \eqref{eq:ourTrafficEquation} and that this solution is nonnegative.

The following lemma is used in the proof of Theorem~\ref{th:ourAlgorithmWorks} and it is also useful in its own-right because it states that  non-overflow networks are monotonous in the input rate.

\begin{lemma}
	\label{lem:goodmanmasseyordered}
	Suppose that the network $(\alpha, \mu, P)$ is \textsc{ni}. If $x = \alpha + (x \wedge \mu) P$ and $y = \alpha + \eps + (y \wedge \mu) P$ for some $\eps \geq 0$, then $y \geq x$.
\end{lemma}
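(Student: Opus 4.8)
The plan is to argue by contradiction, closely following the uniqueness argument inside the proof of Theorem~\ref{th:GMalgorithmworks}. First I would record that both $x$ and $y$ are nonnegative: $x$ is nonnegative by Lemma~\ref{lem:fdnonnegsolposfil} since $(\alpha,\mu,P)$ is \textsc{ni}, and $y$ is nonnegative by the same lemma applied to the network $(\alpha+\eps,\mu,P)$, which is again \textsc{ni} because replacing $\alpha$ by $\alpha+\eps$ with $\eps\ge0$ can only turn classes that cannot be filled into classes that can be filled and leaves drainability untouched, so no class becomes isolated.

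Now set $F=\{i\in N:x_i>y_i\}$ and suppose, for contradiction, that $F\neq\emptyset$. Subtracting the two fixed-point equations componentwise gives $x_i-y_i=-\eps_i+\sum_{j\in N}(x_j\wedge\mu_j-y_j\wedge\mu_j)p_{ji}$, hence $x_i-y_i\le\sum_{j\in N}(x_j\wedge\mu_j-y_j\wedge\mu_j)p_{ji}$; this is the only place the extra $\eps$-term enters, and it enters harmlessly. Summing over $i\in F$ and discarding the nonpositive contributions of the indices $j\notin F$ (for which $x_j\le y_j$), one obtains, after relabelling the dummy index, $\sum_{i\in F}(x_i-y_i)\le\sum_{i\in F}\eta_i(x_i\wedge\mu_i-y_i\wedge\mu_i)$ with $\eta_i=\sum_{j\in F}p_{ij}\le1$. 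Since $0\le x_i\wedge\mu_i-y_i\wedge\mu_i\le x_i-y_i$ for each $i\in F$, the chain $\sum_{i\in F}(x_i-y_i)\le\sum_{i\in F}\eta_i(x_i\wedge\mu_i-y_i\wedge\mu_i)\le\sum_{i\in F}(x_i\wedge\mu_i-y_i\wedge\mu_i)\le\sum_{i\in F}(x_i-y_i)$ collapses to equalities termwise. From the equality $x_i\wedge\mu_i-y_i\wedge\mu_i=x_i-y_i$ together with $x_i>y_i\ge0$ I would deduce $x_i\le\mu_i$ for all $i\in F$ (a short case check on whether $x_i$ or $y_i$ exceeds $\mu_i$), and from $\eta_i(x_i\wedge\mu_i-y_i\wedge\mu_i)=x_i\wedge\mu_i-y_i\wedge\mu_i$ together with $x_i\wedge\mu_i-y_i\wedge\mu_i=x_i-y_i>0$ I would deduce $\eta_i=1$ for all $i\in F$. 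Hence $[P]_{FF}$ is stochastic, so there is a communicating class $C$ of $P$ with $C\subset F$ that cannot be drained.

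The contradiction is then reached as in Theorem~\ref{th:GMalgorithmworks}. If $C$ cannot be filled in $(\alpha,\mu,P)$, then Lemma~\ref{lem:fdnonnegsolposfil} gives $[x]_C=0$, contradicting $x_i>y_i\ge0$ for $i\in C$. If $C$ can be filled, then, since it cannot be drained, Lemma~\ref{lem:positivekappa} applied with $\gamma=x$ (which satisfies $x\ge\alpha+(x\wedge\mu)P$) yields $x_i>\mu_i$ for some $i\in C\subset F$, contradicting $x_i\le\mu_i$ on $F$. In both cases we obtain a contradiction, so $F=\emptyset$, i.e.\ $y\ge x$.

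I do not anticipate a genuine obstacle: the argument is essentially the uniqueness proof of Theorem~\ref{th:GMalgorithmworks} with the sole change that $x$ and $y$ solve slightly different equations and so are no longer interchangeable; the point requiring care is that one must run the argument for the specific set $F=\{x_i>y_i\}$ rather than ``without loss of generality'', checking that the $-\eps_i$ term only helps. As an aside, a shorter route is available: $f(\lambda)=\alpha+(\lambda\wedge\mu)P$ and $g(\lambda)=\alpha+\eps+(\lambda\wedge\mu)P$ are monotone continuous self-maps of the box $\{v:0\le v\le\alpha+\mu P\}$, so the iterates $f^k(0)$ increase to $x$ and $g^k(0)$ increase to $y$ (using uniqueness of the fixed point from Theorem~\ref{th:GMalgorithmworks}), while $g\ge f$ gives $g^k(0)\ge f^k(0)$ by induction, hence $y\ge x$; I would nevertheless present the combinatorial proof as the primary one since it is self-contained within the appendix.
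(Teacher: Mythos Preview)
Your argument is correct and is essentially the paper's own proof: both set up the contradiction via the set $\{i:x_i>y_i\}$, sum the difference of the two fixed-point equations over that set, and force $[P]_{FF}$ stochastic together with $x_i\le\mu_i$ on $F$, then invoke Lemma~\ref{lem:positivekappa} for the contradiction. The paper is slightly terser---it does not separately verify $y\ge0$ and it skips your ``cannot be filled'' branch, since under \textsc{ni} a non-drainable class is automatically fillable, making that branch vacuous.
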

\begin{proof}
	Let $C = \lbrace i \in N \, \vert \, y_{i} < x_{i} \rbrace$ and assume that $C$ is nonempty. Then
	\begin{align*}
		0 > \sum_{i \in C} (y_{i} - x_{i})
		&= \sum_{i \in C} \sum_{j \in N} (y_{j} \wedge \mu_{j} - x_{j} \wedge \mu_{j}) p_{ji} + \sum_{i \in C} \eps_{i} \\
		&\geq \sum_{i \in C} \sum_{j \in C} (y_{j} \wedge \mu_{j} - x_{j} \wedge \mu_{j}) p_{ji} + \sum_{i \in C} \eps_{i} \\
		&= \sum_{i \in C} \big( \sum_{j \in C} p_{ij} \big) (y_{i} \wedge \mu_{i} - x_{i} \wedge \mu_{i}) + \sum_{i \in C} \eps_{i},
	\end{align*}
	which implies that $\sum_{i \in C} \eps_{i} = 0$, $\sum_{j \in C} p_{ij} = 1$ for all $i \in C$, and $x_{i} \leq \mu_{i}$ for all $i \in C$. This means in particular that $C$ is a class that cannot be drained. Because $(\alpha, \mu, P)$ is \textsc{ni}, it follows from Lemma \ref{lem:positivekappa} that $x_{i} > \mu_{i}$ for some $i \in C$. We conclude from this contradiction that $C$ must be empty, so $y \geq x$.
\end{proof}

%%%%%%%%%%%%%%%%%%%%%%%%%%%%%%%%%%%%
%%%%%%%%%%%%%%%%%%%%%%%%%%%%%%%%%%%%
%%%%%%%%%%%%%%%%%%%%%%%%%%%%%%%%%%%%
%%%%%%%%%%%%%%%%%%%%%%%%%%%%%%%%%%%%
%%%%%%%%%%%%%%%%%%%%%%%%%%%%%%%%%%%%
%%%%%%%%%%%%%%%%%%%%%%%%%%%%%%%%%%%%
\bibliography{References}

\begin{thebibliography}{10}

\bibitem{asaduzzaman2010loss}
M.~Asaduzzaman, T.~J. Chaussalet, and N.~J. Robertson.
\newblock A loss network model with overflow for capacity planning of a
  neonatal unit.
\newblock {\em Annals of Operations Research}, 178(1):67--76, 2010.

\bibitem{balsamo2001aqn}
S.~Balsamo, V.~de~Nitto~Persone, and R.~O. Onvural.
\newblock {\em {Analysis of Queueing Networks with Blocking}}.
\newblock Kluwer Academic Publishers, 2001.

\bibitem{bramsonBook2008}
M.~Bramson.
\newblock {\em {Stability of Queueing Networks}}.
\newblock Springer, 2008.

\bibitem{chen1991dfn}
H.~Chen and A.~Mandelbaum.
\newblock {Discrete flow networks: bottleneck analysis and fluid
  approximations}.
\newblock {\em Math. Oper. Res}, 16(2):408--446, 1991.

\bibitem{bookChenYao2001}
H.~Chen and D.~D. Yao.
\newblock {\em {Fundamentals of Queueing Networks: Performance, Asymptotics,
  and Optimization}}.
\newblock Springer, 2001.

\bibitem{cottlelinear}
R.~W. Cottle, J.~S. Pang, and R.~E. Stone.
\newblock {\em The linear complementarity problem}.
\newblock Springer, 1992.

\bibitem{Dai108}
J.~G. Dai.
\newblock On positive {H}arris recurrence of multiclass queueing networks: A
  unified approach via fluid limit models.
\newblock {\em The Annals of Applied Probability}, 5(1):49--77, 1995.

\bibitem{dai1999heavy}
J.~G. Dai and W.~Dai.
\newblock A heavy traffic limit theorem for a class of open queueing networks
  with finite buffers.
\newblock {\em Queueing Systems}, 32(1-3):5--40, 1999.

\bibitem{GitHub:overflow-algorithm}
S.~Fleuren, H.~M. Jansen, E.~Lefeber, and Y.~Nazarathy.
\newblock Overflow algorithm.
\newblock https://github.com/hmjansen/overflow-algorithm.

\bibitem{goodman1984nej}
J.~B. Goodman and W.~Massey.
\newblock {Non-ergodic Jackson network.}
\newblock {\em Journal of Applied Probability}, 21(4):860--869, 1984.

\bibitem{GurvichPerry2012}
I.~Gurvich and O.~Perry.
\newblock Overflow networks: Approximations and implications to call center
  outsourcing.
\newblock {\em Operations Research}, 60(4):996--1009, 2012.

\bibitem{hordijk1987stochastic}
A.~Hordijk and A.~Ridder.
\newblock Stochastic inequalities for an overflow model.
\newblock {\em Journal of Applied Probability}, 24(3):696--708, 1987.

\bibitem{Jackson407}
J.~R. Jackson.
\newblock {Networks of waiting lines}.
\newblock {\em Operations Research}, 5(4):518--521, 1957.

\bibitem{Kaufmanetal1981}
L.~Kaufman, J.~B. Seery, and J.~A. Morrison.
\newblock Overflow models for dimension pbx feature packages.
\newblock {\em The Bell System Technical Journal}, 60(5):661--676, 1981.

\bibitem{Kelly0229}
F.~P. Kelly.
\newblock {Loss networks}.
\newblock {\em Ann. Appl. Probab}, 1(3):319--378, 1991.

\bibitem{KooleTalim2000}
G.~M. Koole and J.~Talim.
\newblock Exponential approximation of multi-skill call centers architecture.
\newblock {\em Proceedings of QNETs 2000}, pages 23/1--10, 2000.

\bibitem{Litvak2008}
N.~Litvak, M.~van Rijsbergen, R.~J. Boucherie, and M.~van Houdenhoven.
\newblock Managing the overflow of intensive care patients.
\newblock {\em European Journal of Operational Research}, 185(3):998--1010,
  2010.

\bibitem{murty1988linear}
K.~G. Murty.
\newblock {\em Linear complementarity, linear and nonlinear programming}.
\newblock Heldermann Berlin, 1988.

\bibitem{newell1971aqt}
G.~F. Newell.
\newblock {\em {Applications of Queueing Theory}}.
\newblock Chapman and Hall, 1982.

\bibitem{schweitzer1982bottleneck}
P.~J. Schweitzer.
\newblock Bottleneck determination in networks of queues.
\newblock In {\em Applied Probability-Computer Science: The Interface Volume
  1}, pages 471--485. Springer, 1982.

\bibitem{Sendfeld2008}
P.~Sendfeld.
\newblock Two queues with weighted one-way overflow.
\newblock {\em Methodology and Computing in Applied Probability},
  10(4):531--555, 2008.

\bibitem{sommer2017analysis}
J.~Sommer, J.~Berkhout, H.~Daduna, and B.~Heidergott.
\newblock {Analysis of Jackson networks with infinite supply and unreliable
  nodes}.
\newblock {\em Queueing Systems}, 87(1-2):181--207, 2017.

\bibitem{tarski1955lattice}
A.~Tarski.
\newblock A lattice-theoretical fixpoint theorem and its applications.
\newblock {\em Pacific Journal of Mathematics}, 5(2):285--309, 1955.

\bibitem{vanDijk1987}
N.~M. van Dijk.
\newblock Simple and insensitive bounds for a grading and an overflow model.
\newblock {\em Operations Research Letters}, 6(2):73--76, 1987.

\bibitem{vanDijk1989}
N.~M. van Dijk.
\newblock A proof of simple insensitive bounds for a pure overflow system.
\newblock {\em Journal of Applied Probability}, 26(1):113--120, 1989.

\bibitem{bookWolff1989}
R.~W. Wolff.
\newblock {\em {Stochastic Modeling and the Theory of Queues}}.
\newblock Prentice Hall, 1989.

\bibitem{zwart2000fluid}
A.~P. Zwart.
\newblock A fluid queue with a finite buffer and subexponential input.
\newblock {\em Advances in Applied Probability}, 32(1):221--243, 2000.

\end{thebibliography}
\end{document}